\documentclass{gtpart}
\usepackage{pinlabel}
\usepackage{graphicx}
\title[Injective homomorphisms of mapping class groups of non-orientable surfaces]{Injective homomorphisms of mapping class groups of non-orientable surfaces}
\author[E Irmak]{Elmas Irmak}
\givenname{Elmas}
\surname{Irmak}
\address{Bowling Green State University, Department of Mathematics and Statistics, Bowling Green, 43403, OH, USA}
\email{eirmak@bgsu.edu}
\urladdr{}
\author[L Paris]{Luis Paris}
\givenname{Luis}
\surname{Paris}
\address{IMB, UMR 5584,CNRS, Univ. Bourgogne Franche-Comté, 21000 Dijon, France}
\email{lparis@u-bourgogne.fr}
\urladdr{}

\subject{primary}{msc2000}{57N05}


\volumenumber{}
\issuenumber{}
\publicationyear{}
\papernumber{}
\startpage{}
\endpage{}
\doi{}
\MR{}
\Zbl{}
\received{}
\revised{}
\accepted{}
\published{}
\publishedonline{}
\proposed{}
\seconded{}
\corresponding{}
\editor{}
\version{}
\newtheorem{thm}{Theorem}[section]
\newtheorem{lem}[thm]{Lemma}
\newtheorem{prop}[thm]{Proposition}
\newtheorem{corl}[thm]{Corollary}

\theoremstyle{definition}
\newtheorem*{rem}{Remark}

\numberwithin{equation}{section}

\makeatletter
\renewcommand{\thefigure}{\ifnum \c@section>\z@ \thesection.\fi
 \@arabic\c@figure}
\@addtoreset{figure}{section}
\makeatother


\begin{document}

\def\Homeo{{\rm Homeo}} \def\MM{\mathcal M} \def\GG{\mathcal G}
\def\Aut{{\rm Aut}} \def\Out{{\rm Out}} \def\Com{{\rm Com}}
\def\TT{\mathcal T} \def\CC{\mathcal C} \def\AA{\mathcal A}
\def\S{\mathbb S} \def\Z{\mathbb Z} \def\SS{\mathcal S}
\def\rk{{\rm rk}} \def\OO{\mathcal O} \def\SSS{\mathfrak S}
\def\Ker{{\rm Ker}} \def\BB{\mathcal B} \def\PP{\mathcal P}
\def\id{{\rm id}}


\begin{abstract}
Let $N$ be 
a compact, connected, non-orientable 
surface of genus $\rho$ with $n$ boundary components, with $\rho \ge 5$ and $n \ge 0$, and let $\MM (N)$ be the mapping class group of $N$.
We show that, if $\GG$ is a finite index subgroup of $\MM (N)$ and $\varphi: \GG \to \MM (N)$ is an injective homomorphism, then there exists $f_0 \in \MM (N)$ such that $\varphi (g) = f_0 g f_0^{-1}$ for all $g \in \GG$.
We deduce that the abstract commensurator of $\MM (N)$ coincides with $\MM (N)$.
\end{abstract}

\maketitle


\section{Introduction}

Let $N$ be a connected and compact surface that can be non-orientable and have boundary but whose Euler characteristic is negative.
We denote by $\Homeo (N)$ the group of homeomorphisms of $N$.
The \emph{mapping class group} of $N$, denoted by $\MM (N)$, is the group of isotopy classes of elements of $\Homeo (N)$.
Note that it is usually assumed that the elements of $\Homeo (N)$ preserve the orientation when $N$ is orientable.
However, the present paper deals with non-orientable surfaces and we want to keep the same definition for all surfaces, so, for us, the elements of $\Homeo (N)$ can reverse the orientation even if $N$ is orientable.
On the other hand, we do not assume that the elements of $\Homeo (N)$ pointwise fix the boundary.
In particular, a Dehn twist along a circle isotopic to a boundary component is trivial.
If $N$ is not connected, the \emph{mapping class group} of $N$, denoted by $\MM (N)$, has the same definition.
Although our results concern connected surfaces, we shall use in some places mapping class groups of non-connected surfaces.

The purpose of this paper is to prove the following.

\begin{thm}\label{thm1_1}
Let $N$ be a 
compact, connected, non-orientable 
surface of genus $\rho$ with $n$ boundary components, with $\rho \ge 5$ and $n \ge 0$, let $\GG$ be a finite index subgroup of $\MM (N)$, and let $\varphi: \GG \to \MM (N)$ be an injective homomorphism.
Then there exists $f_0 \in \MM (N) $ such that $\varphi (g) = f_0 g f_0^{- 1}$ for all $ g \in \GG$.
\end{thm}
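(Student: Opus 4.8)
The plan is to follow the by-now-classical strategy for rigidity of mapping class groups, adapted to the non-orientable setting: reconstruct from the algebra of $\varphi$ a combinatorial automorphism of the curve complex $\CC (N)$, invoke rigidity of $\CC (N)$ to produce $f_0$, and then bootstrap from the induced action on curves back to the whole group. To begin, note that since $\GG$ has finite index in $\MM (N)$ there is an integer $m \ge 1$ (for instance $m = [\MM (N) : \GG]!$) with $t_a^m \in \GG$ for every two-sided simple closed curve $a$. The first and central task is a purely group-theoretic characterization, valid inside an arbitrary finite-index subgroup of $\MM (N)$, of those elements that are powers of a Dehn twist about a two-sided \emph{non-separating} curve. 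As in Ivanov's work and its descendants, this rests on the Thurston--Nielsen classification for non-orientable surfaces, on the description of the centralizer of a Dehn twist, and on the value of the maximal rank of a free abelian subgroup of $\MM (N)$: a power $t_a^k$ is singled out, up to conjugacy, by the fact that its centralizer contains a free abelian group of that maximal rank together with the structural constraints forcing its canonical reduction system to consist of a single non-separating two-sided curve. Because $\varphi$ is injective it maps centralizers into centralizers and respects all of the relevant rank and reducibility data, so $\varphi (t_a^m)$ is itself a power of a Dehn twist about a two-sided non-separating curve; write $\varphi (t_a^m) = t_{\varphi_* (a)}^{\, l_a}$ with $l_a \ne 0$.

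Next, the assignment $a \mapsto \varphi_* (a)$ is well defined on isotopy classes of two-sided non-separating curves, since the curve underlying a nonzero power of a Dehn twist is unique, and it is injective. For two such curves $a, b$ one has $i (a, b) = 0$ if and only if $t_a^m$ and $t_b^m$ commute; as $\varphi$ is injective it preserves commutation in both directions, so $\varphi_*$ preserves and reflects disjointness and is thus a superinjective simplicial self-map of the subcomplex of $\CC (N)$ spanned by the two-sided non-separating curves. Using the connectivity of that subcomplex and the combinatorial patterns relating one-sided curves, separating curves and boundary configurations to the non-separating ones, $\varphi_*$ extends uniquely to a superinjective simplicial self-map of $\CC (N)$. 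Here the hypothesis $\rho \ge 5$ enters decisively: by the rigidity theorem for curve complexes of non-orientable surfaces of that genus, $\varphi_*$ is induced by a unique $f_0 \in \MM (N)$ and is in particular an automorphism. Replacing $\varphi$ by $g \mapsto f_0^{-1} \varphi (g) f_0$, we may assume $\varphi_* = \id$ on $\CC (N)$, so that $\varphi (t_a^m) = t_a^{\, l_a}$ for every two-sided non-separating $a$.

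It remains to identify the exponents and then propagate. Applying $\varphi$ to the braid relations among $m$-th powers of Dehn twists along chains of non-separating curves, and to the lantern and chain relations carried by the orientable subsurfaces of $N$ (which are available once $\rho \ge 5$), forces $l_a$ to be a single constant $\epsilon m$ with $\epsilon \in \{ 1, -1 \}$, and then excludes $\epsilon = -1$, since no homomorphism can invert all the twists occurring in a lantern relation while preserving that relation. Hence $\varphi$ fixes $t_a^m$ for every two-sided non-separating $a$; these elements generate a finite-index subgroup $H$ of $\MM (N)$, so $\varphi$ restricts to the identity on the finite-index subgroup $\GG \cap H$ of $\GG$. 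Finally, for arbitrary $g \in \GG$ the element $g^{-1} \varphi (g)$ commutes with every element of $g^{-1} (\GG \cap H) g \cap (\GG \cap H)$, a subgroup of $\MM (N)$ of finite index; since for $\rho \ge 5$ the centralizer in $\MM (N)$ of any finite-index subgroup is trivial, we conclude $\varphi (g) = g$ for all $g \in \GG$. Undoing the conjugation by $f_0$ gives $\varphi (g) = f_0 g f_0^{-1}$ for all $g \in \GG$.

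The main obstacle is the algebraic detection of Dehn twists: isolating, by group theory alone and uniformly over all finite-index subgroups, exactly the powers of Dehn twists about two-sided non-separating curves. This needs the non-orientable analogues of Ivanov's centralizer computations and of Thurston--Nielsen theory, and extra care because non-orientable surfaces carry one-sided curves, which admit no Dehn twist, as well as crosscap slides. A secondary difficulty is the sign issue above: unlike in the orientable case there is no global orientation distinguishing $t_a$ from $t_a^{-1}$, so the exclusion of $\epsilon = -1$ must rest on a genuinely global relation rather than on an orientation or signature argument.
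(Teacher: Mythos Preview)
Your overall architecture is the standard one and matches the paper's, but the step where you pin down the exponents $l_a$ contains a genuine gap. The braid relation $t_a t_b t_a = t_b t_a t_b$, the lantern relation, and the chain relations are relations among \emph{first} powers of Dehn twists; they do not produce any relation among the $m$-th powers $t_a^m$, and in general the first powers do not lie in $\GG$. So there is nothing in $\GG$ to which $\varphi$ can be applied in order to force $l_a = \epsilon m$, let alone to rule out $\epsilon = -1$. (Conjugation identities such as $t_b^m t_a^m t_b^{-m} = t_{t_b^m(a)}^{\,m}$ \emph{do} live in $\GG$ and can be exploited, but that is not what you wrote, and on a non-orientable surface, where each $t_a$ is only defined up to sign, extracting a clean conclusion from them still requires care.) A secondary issue is that your algebraic detection of twists is stated too loosely: the paper's Proposition~\ref{prop3_3} shows that the condition ``there exist two rank-$\rk(N)$ abelian subgroups meeting exactly in $\langle f\rangle$'' singles out twists along the class $\TT_0(N)$ (which excludes curves bounding one-holed Klein bottles and certain odd-genus separating curves), not the non-separating twists; isolating the non-separating ones purely group-theoretically is extra work you do not indicate.

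The paper sidesteps the exponent problem entirely, and this is worth internalizing. After obtaining the super-injective map $\lambda$ on $\TT(N)$ and $f_0\in\MM(N)$ with $\lambda(\alpha)=f_0(\alpha)$, it never attempts to identify $l_a$. Instead, for $g\in\GG$ and $\alpha$ in a finite set $\CC\subset\TT_0(N)$ with trivial stabilizer, one computes
\[
t_{(\varphi(g)f_0)(\alpha)}^{\,x'y}
=\varphi(g)\,t_{\lambda(\alpha)}^{\,x'y}\,\varphi(g)^{-1}
=\varphi\big(g\,t_\alpha^{\,xy}\,g^{-1}\big)
=\varphi\big(t_{g(\alpha)}^{\,xy}\big)
=t_{\lambda(g(\alpha))}^{\,xy'}
=t_{(f_0 g)(\alpha)}^{\,xy'},
\]
using only $\varphi(t_\alpha^{x})=t_{\lambda(\alpha)}^{x'}$ and $\varphi(t_{g(\alpha)}^{y})=t_{\lambda(g(\alpha))}^{y'}$ for unspecified nonzero $x,x',y,y'$. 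The unknown exponents appear but are irrelevant at the level of the underlying curve, so $(\varphi(g)f_0)(\alpha)=(f_0 g)(\alpha)$; triviality of the stabilizer then gives $\varphi(g)=f_0 g f_0^{-1}$ immediately. This both replaces your steps on $l_a$ and dissolves the sign worry you raise at the end.
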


The same result is known for orientable surfaces (see Irmak \cite{Irmak2,Irmak3,Irmak4}, Behrstock--Margalit \cite{BehMar1}, Bell--Margalit \cite{BelMar1}).
A first consequence of Theorem \ref{thm1_1}  is as follows.
This corollary was proved in Atalan--Szepietowski \cite{Atala1,AtaSze1} for compact, connected, non-orientable surfaces of genus $\rho \ge 7$.
Our contribution is just an extension to compact, connected, non-orientable surfaces of genus $5$ and $6$.

\begin{corl}\label{corl1_2}
Let $N$ be a compact, connected, non-orientable surface of genus $\rho$ with $n$ boundary components, with $\rho \ge 5$ and $n \ge 0$.
Then $ \Aut (\MM (N)) \cong \MM (N)$ and $\Out (\MM (N)) \cong \{1 \}$.
\end{corl}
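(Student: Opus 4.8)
The plan is to deduce Corollary~\ref{corl1_2} directly from Theorem~\ref{thm1_1}, using that $\MM(N)$ is, trivially, a finite index subgroup of itself.

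First I would take an arbitrary $\varphi \in \Aut(\MM(N))$. Since $\varphi$ is in particular an injective homomorphism $\GG \to \MM(N)$ with $\GG = \MM(N)$, Theorem~\ref{thm1_1} produces an element $f_0 \in \MM(N)$ with $\varphi(g) = f_0 g f_0^{-1}$ for all $g \in \MM(N)$; that is, $\varphi$ is the inner automorphism determined by $f_0$. Hence every automorphism of $\MM(N)$ is inner, so $\Aut(\MM(N)) = {\rm Inn}(\MM(N))$, and therefore
\[
\Out(\MM(N)) = \Aut(\MM(N)) / {\rm Inn}(\MM(N)) = \{1\} .
\]

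Next, consider the homomorphism $\Psi \colon \MM(N) \to \Aut(\MM(N))$ sending $f_0$ to the conjugation $g \mapsto f_0 g f_0^{-1}$. The previous paragraph shows that $\Psi$ is surjective, while $\Ker \Psi$ is by definition the center $Z(\MM(N))$. It is known that $Z(\MM(N)) = \{1\}$ for a compact, connected, non-orientable surface of genus $\rho \ge 5$ (with no boundary restriction in our convention, since boundary parallel twists are trivial); granting this, $\Psi$ is an isomorphism and $\Aut(\MM(N)) \cong \MM(N)$, which finishes the proof.

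The only ingredient beyond Theorem~\ref{thm1_1} is the triviality of $Z(\MM(N))$, and this is the single point that needs to be pinned down by a citation (or extracted from the structural facts about $\MM(N)$ used earlier); everything else is a purely formal manipulation, so I do not expect any real obstacle once Theorem~\ref{thm1_1} is in hand.
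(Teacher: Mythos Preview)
Your argument is correct and is exactly the intended one: the paper explicitly leaves the proof of Corollary~\ref{corl1_2} to the reader, and the obvious route is to apply Theorem~\ref{thm1_1} with $\GG=\MM(N)$ together with the triviality of the center. The one point you flag as needing justification, namely $Z(\MM(N))=\{1\}$, is precisely Lemma~\ref{lem4_1} of the paper (take $\GG=\MM(N)$ there), so no external citation is required.
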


Note that the ideas of Atalan--Szepietowski \cite{Atala1, AtaSze1} cannot be extended to the more flexible framework of Theorem \ref{thm1_1} because they use rather precise relations in $\MM (N)$ that do not necessarily hold in a finite index subgroup.

Let $G$ be a group.
We denote by $\widetilde{\Com} (G)$ the set of triples $(A, B, \varphi)$, where $A$ and $B$ are finite index subgroups of $G$ and $\varphi: A \to B$ is an isomorphism.
Let $\sim$ be the equivalence relation on $\widetilde {\Com} (G)$ defined as follows.
We set $(A_1, B_1, \varphi_1) \sim (A_2, B_2, \varphi_2)$ if there exists a finite index subgroup $C$ of $A_1 \cap A_2$ such that $\varphi_1(g) = \varphi_2 (g)$ for all $g \in C$.
The (abstract) \emph{commensurator} of $G$ is defined to be the quotient $\Com (G) = \widetilde{\Com} (G) / \sim$.
This is a group.
A more general consequence of Theorem \ref{thm1_1} is the following.

\begin{corl}\label{corl1_3}
Let $N$ be a 
compact, connected, non-orientable
surface of genus $\rho$ with $n$ boundary components, with $\rho \ge 5$ and $n \ge 0$.
Then $\Com (\MM (N)) \cong \MM (N)$.
\end{corl}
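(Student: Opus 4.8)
The plan is to construct an explicit isomorphism $\Psi \colon \MM(N) \to \Com(\MM(N))$. For $f_0 \in \MM(N)$ write $c_{f_0}$ for the conjugation automorphism $g \mapsto f_0 g f_0^{-1}$ of $\MM(N)$, and let $\Psi(f_0)$ be the class in $\Com(\MM(N))$ of the triple $(\MM(N), \MM(N), c_{f_0})$. Since $c_{f_0} \circ c_{f_1} = c_{f_0 f_1}$ and the group law on $\Com(\MM(N))$ is induced by composition of germs of isomorphisms, $\Psi$ is a group homomorphism; it is well defined because $f_0$ determines $c_{f_0}$. It then remains to prove that $\Psi$ is a bijection.

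Surjectivity is the step at which Theorem \ref{thm1_1} is used, and it is almost immediate. Let $\alpha \in \Com(\MM(N))$ be represented by a triple $(A, B, \varphi)$, with $A$ and $B$ finite index subgroups of $\MM(N)$ and $\varphi \colon A \to B$ an isomorphism. Composing $\varphi$ with the inclusion $B \hookrightarrow \MM(N)$ yields an injective homomorphism from the finite index subgroup $A$ into $\MM(N)$, so by Theorem \ref{thm1_1} there is $f_0 \in \MM(N)$ with $\varphi(g) = f_0 g f_0^{-1}$ for all $g \in A$. Hence $\varphi$ and $c_{f_0}$ agree on $A$, so $(A, B, \varphi) \sim (\MM(N), \MM(N), c_{f_0})$ and $\alpha = \Psi(f_0)$.

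For injectivity, suppose $\Psi(f_0) = \Psi(f_1)$. Then $c_{f_0}$ and $c_{f_1}$ agree on some finite index subgroup $C$ of $\MM(N)$, which means that $z := f_1^{-1} f_0$ centralizes $C$; I must show $z = \id$. Given any two-sided simple closed curve $a$ on $N$, some power $T_a^k$ with $k \ge 1$ of the Dehn twist along $a$ lies in $C$, so $T_a^k = z T_a^k z^{-1} = T_{z(a)}^{\pm k}$; since nontrivial powers of Dehn twists along non-isotopic curves are distinct, this forces $z(a) = a$ up to isotopy. Thus $z$ fixes the isotopy class of every two-sided simple closed curve on $N$, and since $\rho \ge 5$ this forces $z = \id$ by the (well known) faithfulness of the action of $\MM(N)$ on isotopy classes of curves. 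Hence $f_0 = f_1$, $\Psi$ is injective, and $\Com(\MM(N)) \cong \MM(N)$.

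The real content lies entirely in Theorem \ref{thm1_1}; the only nontrivial ingredient added by the present deduction is the triviality of the centralizer of a finite index subgroup of $\MM(N)$ (equivalently, faithfulness of the action on curves in genus $\rho \ge 5$), which is the point I expect to require the most care, and everything else is formal manipulation of the definition of $\Com$. I note that running the same argument with $C = \MM(N)$ also recovers $\Aut(\MM(N)) \cong \MM(N)$ of Corollary \ref{corl1_2}.
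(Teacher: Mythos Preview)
Your proof is correct and follows essentially the same route as the paper's: define $\Psi$ via inner automorphisms, get surjectivity from Theorem~\ref{thm1_1}, and get injectivity from triviality of the centralizer of a finite index subgroup. The paper isolates this last point as Lemma~\ref{lem4_1} and proves it by citing a specific finite collection $\CC\subset\TT_0(N)$ with trivial stabilizer from \cite[Lemmas 3.6, 3.7]{IrmPar1}, whereas you argue that the centralizing element fixes every class in $\TT(N)$ and then invoke faithfulness of the $\MM(N)$-action --- the same content, just packaged slightly differently.
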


Let $N$ be a 
compact, connected, non-orientable
surface of genus $\rho$ with $n$ boundary components, with $\rho \ge 5$ and $n \ge 0$.
We denote by $\CC (N)$ the complex of curves of $N$ and by $\TT(N)$ the subcomplex of $\CC (N)$ formed by the isotopy classes of two-sided circles.
For $\alpha, \beta \in \CC (N) $ we denote by $i (\alpha, \beta)$ the intersection index of $\alpha$ and $\beta$ and, for $\alpha \in \TT (N)$, we denote $ t_\alpha$ the Dehn twist along $\alpha$.
A map $\lambda : \TT (N) \to \TT (N)$ is called a \emph{super-injective simplicial map if the condition $i( \lambda (\alpha), \lambda (\beta)) = 0$ is equivalent to the condition $i (\alpha, \beta) = 0$ for all $\alpha, \beta \in \TT (N)$.}

The proof of Theorem \ref{thm1_1} is based on two other theorems.
The first one, proved in Irmak--Paris \cite[Theorem 1.1]{IrmPar1}, says that, if $\lambda: \TT (N) \to \TT (N)$ is a super-injective simplicial map, then there exists $f_0 \in \MM (N)$ such that $\lambda (\alpha) = f_0 (\alpha) $ for all $\alpha \in \TT (N)$.
Let $\TT'(N)$ be the subfamily of $\TT (N)$ formed by the isotopy classes of circles that do not bound one-holed Klein bottles.
The second theorem is the aim of Section \ref{Sec3} of this paper (see Theorem \ref{thm3_1}).
It says that, if $\GG$ is a finite index subgroup of $\MM (N) $ and $\varphi: \GG \to \MM (N)$ is an injective homomorphism, then there exists a super-injective simplicial map $\lambda: \TT (N) \to \TT (N) $ which sends a 
non-trivial
power of $t_\alpha$ to a non-trivial power of $t_{\lambda (\alpha)}$ for all $\alpha \in \TT'(N)$.

The proof of Theorem \ref{thm3_1} uses several results on reduction classes of elements of $\MM (N)$ and on  maximal abelian subgroups of $\MM (N)$.
These results are classical and widely used in the theory of mapping class groups of orientable surfaces, but they are more or less incomplete in the literature for mapping class groups of non-orientable surfaces.
So, the aim of Section \ref{Sec2} is to recall and complete what is known 
on non-orientable case.

In order to prove Theorem \ref{thm3_1} we first prove an algebraic characterization of some Dehn twists up to roots and powers (see Proposition \ref{prop3_3}).
This characterization seems interesting by itself and we believe that it could be used for other purposes.
Note also that this characterization is independent from related works of Atalan \cite{Atala1,Atala2} and Atalan--Szepietowski \cite{AtaSze1}.

The final proofs of Theorem \ref{thm1_1} and Corollary \ref{corl1_3} are given in Section \ref{Sec4}. 
The proof of Corollary \ref{corl1_2} is left to the reader.


\section{Preliminaries}\label{Sec2}

\subsection{Canonical reduction systems}

From now on $N=N_{\rho,n}$ denotes 
compact, connected, non-orientable
surface of genus $\rho \ge 5$ with $n \ge 0$ boundary components. 
A \emph{circle} in $N$ is an embedding $a : \S^1 \hookrightarrow N\setminus \partial N$.
This is assumed to be non-oriented. 
It is called \emph{generic} if it does not bound a disk, it does not bound a M\"obius band, and it is not isotopic to a boundary component.  
The isotopy class of a circle $a$ is denoted by $[a]$.
A circle $a$ is called \emph{two-sided} (resp. \emph{one-sided}) if a regular neighborhood of $a$ is an annulus (resp. a M\"obius band).
We denote by $\CC(N)$ the set of isotopy classes of generic circles, and by $\TT (N)$ the subset of $\CC (N)$ of isotopy classes of generic two-sided circles.

The \emph{intersection index} of two classes $\alpha, \beta \in \CC(N)$ is 
$i(\alpha, \beta) = \min\{|a \cap b| \mid a \in \alpha \text{ and } b \in \beta\}$.
The set $\CC(N)$ is endowed with a structure of simplicial complex, where a finite set $\AA$ is a simplex if $i(\alpha, \beta) = 0$ for all $\alpha, \beta \in \AA$.
This simplicial complex is called the \emph{curve complex} of $N$.
Note that $\MM(N)$ acts naturally on $\CC(N)$, and this action is a simplicial action. 
Note also that $\TT(N)$ is invariant under this action.

Let $\AA=\{\alpha_1, \dots, \alpha_r\}$ be a simplex of $\CC(N)$.
Let $\MM_\AA(N)$ denote the stabilizer of $\AA$ in $\MM(N)$.
Choose pairwise disjoint representatives  $a_i \in \alpha_i$, $i \in \{1, \dots, r\}$, and denote by $N_\AA$ the natural compactification of $N \setminus (\cup_{i=1}^r a_i)$.
The groups $\MM_\AA(N)$ and $\MM(N_\AA)$ are linked by a homomorphism $\Lambda_\AA : \MM_\AA(N) \to \MM(N_\AA)$, called the \emph{reduction homomorphism} along $\AA$, defined as follows.
Let $f \in \MM_\AA(N)$.
Choose a representative $F \in \Homeo(N)$ of $f$ such that $F(\{a_1, \dots, a_r\}) = \{a_1, \dots, a_r\}$.
The restriction of $F$ to $N \setminus ( \cup_{i=1}^r a_i)$ extends in a unique way to a homeomorphism $\hat F \in \Homeo(N_\AA)$.
Then $\Lambda_\AA (f)$ is the element of $\MM (N_\AA)$ represented by $\hat F$.

We denote by $t_\alpha$ the Dehn twist along a class $\alpha \in \TT (N)$.
This is defined up to a power of $\pm 1$, since its definition depends on the choice of an orientation in a regular neighborhhood of a representative of $\alpha$, but this lack of precision will not affect the rest of the paper.  
If $\AA$ is a simplex of $\CC (N)$, we set $\AA_\TT = \AA \cap \TT(N)$, and we denote by $Z_\AA$ the subgroup of $\MM(N)$ generated by $\{ t_\alpha \mid \alpha \in \AA_\TT \}$.
The following is a classical result for mapping class groups of orientable surfaces.
It is due to Stukow \cite{Stuko1,Stuko2} for non-orientable surfaces. 

\begin{prop}[Stukow \cite{Stuko1,Stuko2}]\label{prop2_1}
Let $\AA$ be a simplex of $\CC(N)$.
Then $Z_\AA$ is the kernel of $\Lambda_\AA$ and it is a free abelian group of rank $|\AA_\TT|$.
\end{prop}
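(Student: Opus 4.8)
The plan is to reduce the non-orientable case to a combination of already-known structural facts. First I would set up the reduction homomorphism carefully: cut $N$ along pairwise disjoint representatives $a_1,\dots,a_r$ of the classes in $\AA$, obtaining $N_\AA$, and recall that $\Lambda_\AA$ is well-defined on $\MM_\AA(N)$ (independence of the chosen representative $F$ follows from the usual Alexander-type argument, since two homeomorphisms of $N$ permuting the $a_i$ the same way and isotopic in $N$ induce isotopic homeomorphisms of $N_\AA$). The one-sided classes in $\AA\setminus\AA_\TT$ play no role in the kernel because a Dehn twist is only defined along a two-sided circle, so I expect the genuinely relevant data to be $\AA_\TT$.

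The core of the argument has two halves. For the inclusion $Z_\AA\subseteq\Ker\Lambda_\AA$: a Dehn twist $t_{\alpha_i}$ along a two-sided class can be represented by a homeomorphism supported in an annular neighborhood $A_i$ of $a_i$; after cutting, $A_i$ falls apart into two half-open collars glued to the boundary of $N_\AA$, and the induced homeomorphism of $N_\AA$ is isotopic to the identity (recall that in this paper twists along boundary-parallel circles are trivial, which is exactly what makes $\Lambda_\AA(t_{\alpha_i})=1$). For the reverse inclusion $\Ker\Lambda_\AA\subseteq Z_\AA$: if $f\in\MM_\AA(N)$ has $\Lambda_\AA(f)=1$, choose a representative $F$ fixing each $a_i$ and restricting to a homeomorphism of $N_\AA$ isotopic to the identity; pushing that isotopy back to $N$ shows $F$ is isotopic to a homeomorphism supported in a regular neighborhood of $\cup a_i$, i.e. a product of twists (and possibly half-twists swapping the two sides of an $a_i$, but those do not stabilize $\AA$ pointwise unless a two-sided $\alpha_i$ is mapped to itself preserving sides, which after passing to $\MM_\AA(N)$ as the pointwise stabilizer leaves only integer twists). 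This is essentially Stukow's analysis, so I would cite \cite{Stuko1,Stuko2} for the precise bookkeeping rather than reproduce it.

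Finally, for the freeness and the rank statement: the subgroup generated by disjoint Dehn twists $t_{\alpha_i}$, $\alpha_i\in\AA_\TT$, is abelian because the twists have disjoint supports, and it is free abelian of rank exactly $|\AA_\TT|$ because the homomorphism $Z_\AA\to\Z^{|\AA_\TT|}$ recording, for each $i$, the algebraic intersection-type count of a suitable auxiliary arc (equivalently, applying the reduction homomorphism along $\AA\setminus\{\alpha_i\}$ and then a geometric-intersection pairing with a dual arc crossing only $a_i$) is injective on each factor and the images are independent; no relations among twists along disjoint two-sided circles can hold in $\MM(N)$, again by Stukow. I do not expect any serious obstacle here: the only subtlety specific to the non-orientable setting is that some of the $a_i$ may be one-sided and must be discarded from the count, and that cutting along a one-sided circle changes the topological type of the complementary surface in a way that does not affect the kernel of $\Lambda_\AA$; once that is observed the proof runs exactly as in the orientable case. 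The main point to be careful about is matching the paper's convention that boundary-parallel twists are trivial, since that is what forces $\Ker\Lambda_\AA$ to be generated by twists along the $\alpha_i$ and nothing coming from new boundary components of $N_\AA$.
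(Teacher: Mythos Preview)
The paper gives no proof of this proposition: it is stated as a result of Stukow and simply cited to \cite{Stuko1,Stuko2}. So there is nothing to compare your argument against beyond the observation that the authors take this as known.

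Your outline is a reasonable reconstruction of how such a proof goes, and you correctly identify the key structural point---that in this paper's conventions boundary-parallel twists are trivial, so each $t_{\alpha_i}$ dies under $\Lambda_\AA$, and conversely anything dying under $\Lambda_\AA$ must be supported near the cut curves. Two small imprecisions to clean up if you flesh this out: first, $\MM_\AA(N)$ is the \emph{setwise} stabilizer of $\AA$, not the pointwise one, so your parenthetical about half-twists is slightly misdirected---the correct reason half-twists do not appear in $\Ker\Lambda_\AA$ is that a half-twist about $a_i$ swaps the two boundary components of $N_\AA$ over $a_i$ and hence has nontrivial image under $\Lambda_\AA$, not that it fails to lie in $\MM_\AA(N)$. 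Second, for the one-sided $a_i$ you should say a word about why nothing nontrivial supported in a M\"obius neighborhood survives (the mapping class group of a M\"obius band relative to its boundary is trivial), since that is the genuine non-orientable wrinkle. These are exactly the details Stukow handles, so citing him as you propose is appropriate.
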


Let $f \in \MM(N)$.
We say that $f$ is \emph{pseudo-Anosov} if $\CC(N) \neq \emptyset$ and $f^n(\alpha) \neq \alpha$ for all $\alpha \in \CC(N)$ and all $n \in \Z \setminus \{ 0\}$.
We say that $f$ is \emph{periodic} if it is of finite order.
We say that $f$ is \emph{reducible} if there is a non-empty simplex $\AA$ of $\CC(N)$ such that $f(\AA) = \AA$.
In that case, $\AA$ is called a \emph{reduction system} for $f$ and an element of $\AA$ is called a \emph{reduction class} for $f$.
An element $f \in \MM(N)$ can be both periodic and reducible, but it cannot be both pseudo-Anosov and periodic, and it cannot be both pseudo-Anosov and reducible.

Let $f \in \MM(N)$ be a reducible element and let $\AA$ be a reduction system for $f$.
Let $N_1, \dots, N_\ell$ be the connected components of $N_\AA$.
Choose $n \in \Z \setminus \{ 0 \}$ such that $f^n (N_i)= N_i$ for all $i$ and denote by $\Lambda_{i,\AA}(f^n) \in \MM(N_i)$ the restriction of $\Lambda_\AA (f^n)$ to $N_i$.
We say that $\AA$ is an \emph{adequate reduction system} for $f$ if, for all $i \in \{1, \dots, \ell \}$, $\Lambda_{i,\AA}(f^n)$ is either periodic or pseudo-Anosov. 
This definition does not depend on the choice of $n$.
The following was proved by Thurston \cite{FaLaPo1} for orientable surfaces and then was extended to non-orientable surfaces by Wu \cite{Wu1}.

\begin{thm}[Thurston \cite{FaLaPo1}, Wu \cite{Wu1}]\label{thm2_2}
A mapping class $f \in \MM(N)$ is either pseudo-Anosov, periodic or reducible. 
Moreover, if it is reducible, then it admits an adequate reduction system.
\end{thm}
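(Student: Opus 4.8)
The plan is to reduce the statement to the orientable case, where Thurston's theorem is available, by passing to the orientation double cover. First I would fix the orientation double cover $p \colon \widetilde{N} \to N$ --- so $\widetilde{N}$ is orientable with $\chi(\widetilde{N}) = 2\chi(N) < 0$ --- together with its nontrivial deck transformation $\sigma \in \Homeo(\widetilde{N})$, a fixed-point-free orientation-reversing involution. Every $F \in \Homeo(N)$ has exactly two lifts $\widetilde{F}$ and $\sigma \widetilde{F}$ to $\widetilde{N}$, and since $\widetilde{F} \sigma \widetilde{F}^{-1}$ is a nontrivial deck transformation it equals $\sigma$, so both lifts commute with $\sigma$; fix one of them, $\widetilde{F}$, a lift of a representative $F$ of a given $f \in \MM(N)$. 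The heart of the argument is the set of equivalences: $f$ is periodic (resp.\ reducible, resp.\ pseudo-Anosov) on $N$ if and only if $\widetilde{F}$ is periodic (resp.\ reducible, resp.\ pseudo-Anosov) on $\widetilde{N}$. Granting these, the trichotomy for $f$ follows at once from the trichotomy for $\widetilde{F}$ supplied by Thurston's theorem on the orientable surface $\widetilde{N}$.

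To prove the equivalences I would argue type by type. The periodic case is elementary: if $F^{m} = \id$ then $\widetilde{F}^{m}$ is a deck transformation and $\widetilde{F}^{2m} = \id$, while if $\widetilde{F}^{k} = \id$ then $p \circ \widetilde{F}^{k} = F^{k} \circ p$ and surjectivity of $p$ force $F^{k} = \id$. For the reducible case I would use that the covering map $p$ is $\pi_{1}$-injective, so that essential circles project to essential circles and their preimages are again essential: if $\AA$ is a nonempty $f$-invariant simplex of $\CC(N)$ then $p^{-1}(\AA)$ is a nonempty $\widetilde{F}$-invariant multicurve of essential circles of $\widetilde{N}$, hence $\widetilde{F}$ is reducible; conversely, if $\widetilde{\AA}$ is a nonempty $\widetilde{F}$-invariant multicurve of essential circles of $\widetilde{N}$, then $p(\widetilde{\AA})$ is a nonempty $f$-invariant simplex of $\CC(N)$, hence $f$ is reducible. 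For the pseudo-Anosov case: if $f^{n}(\alpha) = \alpha$ for some $\alpha \in \CC(N)$ and $n \neq 0$, then $\widetilde{F}^{n}$ permutes the components of $p^{-1}(\alpha)$, so $\widetilde{F}^{2n}$ fixes an essential circle and $\widetilde{F}$ is not pseudo-Anosov; conversely, if $\widetilde{F}$ is not pseudo-Anosov, then it is periodic or reducible by Thurston's theorem, hence $f$ is periodic or reducible by the first two equivalences, and such an $f$ is not pseudo-Anosov. This proves the three equivalences and with them the trichotomy.

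For the ``moreover'' part I would assume $f$ reducible and, since the periodic case is trivial (a finite-order $f$ makes any nonempty $f$-invariant simplex an adequate reduction system), also assume $f$ non-periodic. Then $\widetilde{F}$ is reducible and non-periodic by the equivalences, so its canonical reduction system $\widetilde{\AA}_{c}$ (Birman--Lubotzky--McCarthy) is nonempty and adequate. Because the canonical reduction system transforms naturally under homeomorphisms and $\sigma \widetilde{F} \sigma^{-1} = \widetilde{F}$, we get $\sigma(\widetilde{\AA}_{c}) = \widetilde{\AA}_{c}$, so $\widetilde{\AA}_{c}$ descends to a nonempty $f$-invariant simplex $\AA_{c} = p(\widetilde{\AA}_{c})$ of $\CC(N)$. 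It then remains to check that $\AA_{c}$ is adequate, i.e.\ that on each component $N_{i}$ of $N_{\AA_{c}}$ a suitable power of $f$ restricts to a periodic or pseudo-Anosov map; I would do this by running the same device one level down, since the preimage of $N_{i}$ is a union of components of $\widetilde{N}$ cut along $\widetilde{\AA}_{c}$, on each of which a suitable power of $\widetilde{F}$ restricts to a periodic or pseudo-Anosov map by adequacy of $\widetilde{\AA}_{c}$, and the periodic and pseudo-Anosov equivalences --- applied now to the (possibly orientable) piece $N_{i}$ and the covering restricted over it --- transfer this conclusion back to $N_{i}$.

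The step I expect to be the main obstacle is the curve bookkeeping in the last two paragraphs: proving that essential multicurves stay essential under $p$ and under $p^{-1}$ up to isotopy, and tracking which components of $p^{-1}(\AA)$ get identified, which become one-sided (so that the projected system may legitimately contain circles with M\"obius-band neighbourhoods), and which could a priori become boundary-parallel. None of this is deep, but it is precisely where the non-orientable case diverges from the orientable one. An alternative that avoids covering spaces would be to reprove Thurston's classification intrinsically on $N$, via the Teichm\"uller space of the non-orientable surface and its Thurston compactification by projective measured foliations, showing that a class which is neither periodic nor reducible acts with north--south dynamics on the boundary and hence is pseudo-Anosov; this is essentially Wu's route, and while the non-orientable Teichm\"uller theory it requires is by now standard, it is considerably longer to set up than the covering-space argument.
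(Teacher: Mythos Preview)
The paper does not prove this theorem; it is stated with attribution to Thurston (orientable case) and Wu (non-orientable extension), and the surrounding remarks make clear that Wu's argument proceeds via the orientation double cover --- exactly your route. So there is no proof in the paper to compare against, and your overall strategy is the standard one.

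There is, however, a concrete gap in your reducible equivalence. The assertion ``if $\widetilde{\AA}$ is a nonempty $\widetilde F$-invariant multicurve on $\widetilde N$ then $p(\widetilde{\AA})$ is a simplex of $\CC(N)$'' is false as stated. Without $\sigma$-invariance the images $p(\widetilde c)$ need not be pairwise disjoint; and even for a $\sigma$-invariant system, a class $[\widetilde c]$ fixed by $\sigma$ may project to a two-sided circle that bounds a M\"obius band in $N$ (this happens precisely when $\widetilde c$ and a disjoint representative of $\sigma(\widetilde c)$ cobound a $\sigma$-invariant annulus, whose quotient is that M\"obius band), and such a circle is not generic, hence not in $\CC(N)$. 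You flag the bookkeeping as the main obstacle, but this particular failure mode --- projections that bound M\"obius bands, as opposed to projections that \emph{have} M\"obius-band neighbourhoods --- is not among the cases you list.

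The repair is to do here what you already do for the adequate system: pass to the canonical reduction system of $\widetilde F$, which is $\sigma$-invariant by naturality, realize it $\sigma$-equivariantly, and whenever a projected circle bounds a M\"obius band replace it by the core of that band --- a one-sided, hence automatically generic, circle that $f$ still preserves. With that adjustment the trichotomy follows, and your descent of adequacy to the pieces $N_i$ is sound.
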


Let $f \in \MM(N)$.
A reduction class $\alpha$ for $f$ is an \emph{essential reduction class} for $f$ if for each $\beta \in \CC (N)$ such that $i(\alpha, \beta) \neq 0$ and for each integer $m \in \Z \setminus \{0\}$, the classes $f^m (\beta)$ and $\beta$ are distinct.
We denote by $\SS (f)$ the set of essential reduction classes for $f$. 
Note that $\SS(f) = \emptyset$ if and only if $f$ is either pseudo-Anosov or periodic.
The following was proved by Birman--Lubotzky--McCarthy \cite{BiLuMc1} for orientable surfaces and by Wu \cite{Wu1} and Kuno \cite{Kuno1} for non-orientable surfaces.

\begin{thm}[Wu \cite{Wu1}, Kuno \cite{Kuno1}]\label{thm2_3}
Let $f \in \MM(N)$ be a non-periodic reducible mapping class.  
Then $\SS(f) \neq \emptyset$, $\SS (f)$ is an adequate reduction system for $f$, and every adequate reduction system for $f$ contains $\SS (f)$.
In other words, $\SS (f)$ is the unique minimal adequate reduction system for $f$.
\end{thm}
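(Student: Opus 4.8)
The plan is to adapt the Birman--Lubotzky--McCarthy description of the canonical reduction system (the orientable prototype of this statement) to the non-orientable setting, the only substantial inputs being Proposition~\ref{prop2_1} and Theorem~\ref{thm2_2}. I would first record two elementary facts, proved exactly as for orientable surfaces: $\SS(f)$ is $f$-invariant, and $\SS(f^k)=\SS(f)$ for all $k\in\Z\setminus\{0\}$ (a class is fixed by a power of $f$ iff by a power of $f^k$, and $f$ and $f^k$ have the same reduction classes); likewise a simplex is an adequate reduction system for $f$ iff it is one for $f^k$, since a power of a periodic (resp.\ pseudo-Anosov) class is periodic (resp.\ pseudo-Anosov). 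Thus, once an adequate reduction system $\BB$ and finitely many reduction classes of interest are fixed, I may replace $f$ by a power so that $f$ fixes each of them and each component of $N_\BB$. I will also use freely that cutting $N$ along a simplex of $\CC(N)$ yields essential subsurfaces, that intersection numbers and the actions on curve complexes behave the same inside such a subsurface as in $N$, and -- the one genuinely non-orientable point -- that, since $\rho\ge5$, no component of $N_\BB$ is a M\"obius band (otherwise some element of $\BB$ would bound a M\"obius band in $N$, contradicting that it is a generic circle, or else $N$ would be a Klein bottle or a M\"obius band).

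Step~1 would be to show that every adequate reduction system $\BB$ contains $\SS(f)$; this gives the third assertion and shows $\SS(f)$ is a simplex. Take $\alpha\in\SS(f)$. If $i(\alpha,\beta)\ne0$ for some $\beta\in\BB$, then $f(\BB)=\BB$ forces $f^m\beta=\beta$ for some $m\ne0$, contradicting $\alpha\in\SS(f)$; so $\alpha$ is disjoint from $\BB$. If in addition $\alpha\notin\BB$, then after isotopy $\alpha$ lies in a component $M$ of $N_\BB$ and is a generic circle of $M$; as $f$ fixes $\alpha$, it is a reduction class for the restriction $\Lambda_M(f)$, so $\Lambda_M(f)$ is reducible, hence periodic by adequacy (it is not pseudo-Anosov), so a nonzero power of $f$ restricts to the identity on $M$. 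Since $M$ carries a generic circle and is not a M\"obius band, it carries a generic circle $\gamma$ with $i(\alpha,\gamma)\ne0$, which is then fixed by that power of $f$ -- contradicting $\alpha\in\SS(f)$. Hence $\alpha\in\BB$. As adequate reduction systems exist and are simplices, $\SS(f)$ is a simplex; being $f$-invariant, it is a reduction system as soon as it is non-empty.

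Step~2 would be to show $\SS(f)\ne\emptyset$. Fix an adequate reduction system $\BB$ -- non-empty, since $f$ is reducible but neither pseudo-Anosov nor periodic -- with $f$ fixing $\BB$ and every component of $N_\BB$. If $\Lambda_{M_0}(f)$ is pseudo-Anosov for some component $M_0$, I would pick $\beta_0\in\BB$ on $\partial M_0$ and argue it is essential: a curve $\gamma$ with $i(\beta_0,\gamma)\ne0$ fixed by a power of $f$ would give, in minimal position, a nonempty essential multi-arc in $M_0$ preserved by a power of the pseudo-Anosov $\Lambda_{M_0}(f)$, which is impossible. Otherwise $\Lambda_M(f)$ is periodic for every $M$, so a nonzero power $f^d$ lies in $\Ker\Lambda_\BB=Z_\BB$ (Proposition~\ref{prop2_1}), that is $f^d=\prod_{\beta\in\BB_\TT}t_\beta^{n_\beta}$; since $f$, hence $f^d$, is non-periodic, some $n_{\beta_0}\ne0$, and then $\beta_0$ is essential because a multi-twist with nonzero exponent along $\beta_0$ moves every curve crossing $\beta_0$. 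Either way $\SS(f)$ contains an essential reduction class.

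Step~3 would be to show $\SS(f)$ is adequate, after which we are done: by Steps~1--3, $\SS(f)$ is a non-empty adequate reduction system contained in every adequate reduction system, hence the unique minimal one. By Step~1 fix an adequate $\BB\supseteq\SS(f)$ with $f$ fixing everything, and let $M$ be a component of $N_{\SS(f)}$. The elements of $\BB$ interior to $M$ form a reduction system $\BB_M$ for $\Lambda_M(f)$ whose cut surface is a union of components of $N_\BB$, so $\BB_M$ is adequate for $\Lambda_M(f)$; if $\BB_M=\emptyset$ then $M$ is a component of $N_\BB$ and $\Lambda_M(f)$ is periodic or pseudo-Anosov. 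If $\BB_M\ne\emptyset$, suppose for contradiction $\Lambda_M(f)$ is reducible and non-periodic; by Step~2 applied to $M$, there is $\delta\in\SS(\Lambda_M(f))$, which determines a generic circle of $N$ lying in $\mathrm{int}(M)$ and disjoint from $\SS(f)$. Then $\delta$ is a reduction class for $f$ (its $f$-orbit corresponds, via $\Lambda_M(f^j)=\Lambda_M(f)^j$, to the finite simplex formed by the $\Lambda_M(f)$-orbit of $\delta$), and it is essential: given $\gamma$ with $i(\delta,\gamma)\ne0$ and $f^p\gamma=\gamma$, $p\ne0$, either $\gamma$ crosses some $\sigma\in\SS(f)$, contradicting that $\sigma$ is essential, or $\gamma$ can be pushed off $\SS(f)$ and so into $M$ (it meets $\delta$), where $\Lambda_M(f)^p\gamma=\gamma$ and $i(\delta,\gamma)\ne0$ contradict $\delta\in\SS(\Lambda_M(f))$. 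Thus $\delta\in\SS(f)$, whereas $\delta\subseteq\mathrm{int}(M)$ is disjoint from and not isotopic to any element of $\SS(f)$, so $\delta\notin\SS(f)$: contradiction. So every $\Lambda_M(f)$ is periodic or pseudo-Anosov, i.e.\ $\SS(f)$ is adequate. The step I expect to be most delicate is not the logical skeleton but the non-orientable bookkeeping running through it -- the one-sided circles in $\CC(N)$, the confinement of Dehn twists and of Proposition~\ref{prop2_1} to two-sided circles, the M\"obius-band exclusion, and the pseudo-Anosov no-invariant-arc fact for surfaces with boundary -- all of which hold but must be used precisely in the forms provided by Proposition~\ref{prop2_1} and Theorem~\ref{thm2_2}.
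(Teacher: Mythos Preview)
The paper does not give its own proof of Theorem~\ref{thm2_3}; it is quoted from Wu and Kuno, and the remark immediately following the statement explains precisely why the Birman--Lubotzky--McCarthy argument you are transcribing \emph{cannot} be transported directly to the non-orientable setting: Lemma~2.4 of \cite{BiLuMc1} is false for non-orientable surfaces, and that lemma is crucial to the orientable proof. Your proposal runs into exactly this obstruction.

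The concrete gap is in Step~1. You assert that if a component $M$ of $N_\BB$ carries a generic circle $\alpha$ and $M$ is not a M\"obius band, then $M$ carries a generic circle $\gamma$ with $i(\alpha,\gamma)\ne0$. This is false already for $M\cong N_{1,2}$: one checks that $\CC(N_{1,2})$ consists of a single isotopy class, the one-sided core, so no such $\gamma$ exists. Your exclusion ``not a M\"obius band'' rules out $N_{1,1}$ but not $N_{1,2}$, and $N_{1,2}$ certainly occurs as a component of $N_\BB$ (compare Lemma~\ref{lem3_5}). Without this step you cannot conclude $\alpha\in\BB$, and the rest of the argument does not go through. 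Any curve $\gamma$ in $N$ with $i(\alpha,\gamma)\ne0$ must then leave $M$ and cross elements of $\BB$, so the fact that a power of $f$ restricts to the identity on $M$ alone is not enough to produce a curve crossing $\alpha$ that is fixed by a power of $f$; this is where the orientable argument genuinely breaks. The two known remedies are the ones the paper's remark records: Wu passes to the orientable double cover, where the Birman--Lubotzky--McCarthy theorem applies, and pulls the canonical reduction system back, while Kuno stays on $N$ but replaces the failed lemma by a substitute adapted to non-orientable pieces. Your outline would need one of these ingredients, or an equivalent, to close the gap.
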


\begin{rem}
The proof of Theorem \ref{thm2_3} for orientable surfaces given in Birman--Lubotzky--McCarthy \cite{BiLuMc1} does not extend to non-orientable surfaces, since Lemma 2.4 in \cite{BiLuMc1} is false for non-orientable surfaces and this lemma is crucial in the proof. 
The proof for non-orientable surfaces is partially made in Wu \cite{Wu1} using oriented double coverings, and it is easy to get the whole result by these means. 
Kuno's approach \cite{Kuno1} is different in the sense that she replaces Lemma 2.4 of \cite{BiLuMc1} by another lemma of the same type. 
\end{rem}

If $f \in \MM (N)$ is a non-periodic reducible mapping class, then the set $\SS(f)$ is called the \emph{essential reduction system} for $f$.
Recall that, if $f$ is either periodic or pseudo-Anosov, then $\SS (f) = \emptyset$.
The following lemma follows from the definition of $\SS(f)$.

\begin{lem}\label{lem2_4}
Let $f \in \MM(N)$.
Then
\begin{itemize}
\item[(1)]
$\SS(f^n) = \SS(f)$ for all $n \in \Z\setminus \{0\}$,
\item[(2)]
$\SS(gfg^{-1}) = g(\SS(f))$ for all $g \in \MM(N)$.
\end{itemize}
\end{lem}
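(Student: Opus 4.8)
The plan is to derive both statements directly from the definition of the essential reduction system $\SS(f)$, using only the fact that the condition defining an essential reduction class is phrased entirely in terms of the dynamics of powers of $f$ on $\CC(N)$, together with the naturality of the $\MM(N)$-action on $\CC(N)$ and the intersection index.

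For part (1), fix $n \in \Z \setminus \{0\}$. I would first observe that $\alpha$ is a reduction class for $f$ if and only if it is a reduction class for $f^n$: indeed $f(\alpha)=\alpha$ clearly implies $f^n(\alpha)=\alpha$, and conversely, since any reducible map admits an adequate reduction system and (by Theorem \ref{thm2_3}) $\SS(f)$ consists of classes fixed by $f$ itself, one checks that $f^n(\alpha)=\alpha$ with $\alpha$ a reduction class of $f^n$ forces $\alpha$ to be $f$-invariant as well — alternatively, and more cleanly, I would argue entirely within the essential-reduction condition, which only ever refers to the collection of maps $\{f^m \mid m \in \Z\setminus\{0\}\}$, and this collection equals $\{(f^n)^k \mid k \in \Z\setminus\{0\}\}$ up to passing to the sub-collection of powers divisible by $n$. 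The point is that the defining condition for $\alpha \in \SS(f)$ — namely that for every $\beta$ with $i(\alpha,\beta)\neq 0$ and every $m \in \Z\setminus\{0\}$ one has $f^m(\beta)\neq\beta$ — is satisfied by $f$ if and only if it is satisfied by $f^n$, because $\{f^{nm} \mid m\in\Z\setminus\{0\}\} \subseteq \{f^m \mid m \in \Z\setminus\{0\}\}$ gives one inclusion, and for the other one uses that if $f^m(\beta)=\beta$ then also $f^{nm}(\beta) = (f^n)^m(\beta)=\beta$. Combined with the equivalence of ``reduction class'' for $f$ and for $f^n$, this yields $\SS(f^n)=\SS(f)$.

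For part (2), fix $g \in \MM(N)$. Since the action of $\MM(N)$ on $\CC(N)$ is simplicial and preserves the intersection index, i.e.\ $i(g(\alpha),g(\beta))=i(\alpha,\beta)$ for all $\alpha,\beta$, one has: $\alpha$ is a reduction class for $f$ iff $f(\alpha)=\alpha$ iff $(gfg^{-1})(g(\alpha))=g(f(\alpha))=g(\alpha)$ iff $g(\alpha)$ is a reduction class for $gfg^{-1}$. For the essential condition, $\alpha \in \SS(f)$ means that for all $\beta$ with $i(\alpha,\beta)\neq 0$ and all $m\neq 0$, $f^m(\beta)\neq\beta$; substituting $\beta = g^{-1}(\gamma)$, this says exactly that for all $\gamma$ with $i(g(\alpha),\gamma)=i(\alpha,g^{-1}(\gamma))\neq 0$ and all $m\neq 0$, $(gfg^{-1})^m(\gamma)=g(f^m(g^{-1}(\gamma)))\neq\gamma$, which is precisely the statement $g(\alpha)\in\SS(gfg^{-1})$. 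Hence $\SS(gfg^{-1})=g(\SS(f))$.

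There is no real obstacle here; both parts are formal consequences of unwinding the definition, and the only thing to be careful about is the logical quantifier juggling in part (1) — specifically making sure that restricting to powers of $f$ divisible by $n$ does not lose information, which holds because the condition is a ``for all nonzero $m$'' condition and $f^m(\beta)=\beta \Rightarrow f^{nm}(\beta)=\beta$. I would write the argument compactly, perhaps in half a page, treating (2) first since it is the cleaner of the two and then doing (1).
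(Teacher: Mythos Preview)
Your approach matches the paper's, which does not actually give a proof: it simply says ``the following lemma follows from the definition of $\SS(f)$.'' A definition-chase is exactly what is intended, and your handling of the \emph{essential} part of the condition is correct in both (1) and (2).

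There is one recurring slip worth fixing. You write ``$\alpha$ is a reduction class for $f$ iff $f(\alpha)=\alpha$,'' but by the paper's definition a reduction class is an element of some $f$-invariant \emph{simplex}; $f$ may permute the classes in that simplex without fixing $\alpha$ individually. This affects both parts: in (2) the chain of ``iff''s should instead read ``$\AA$ is an $f$-invariant simplex containing $\alpha$ iff $g(\AA)$ is a $gfg^{-1}$-invariant simplex containing $g(\alpha)$,'' which is just as immediate; in (1) the direction $\SS(f^n)\subseteq\SS(f)$ requires knowing that each $\alpha\in\SS(f^n)$ is a reduction class for $f$, and ``$f^n(\alpha)=\alpha$ forces $f(\alpha)=\alpha$'' is neither what you need nor true in general. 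The clean fix is the one you hint at: prove (2) first, then apply it with $g=f$ to get $f(\SS(f^n))=\SS(f f^n f^{-1})=\SS(f^n)$, so $\SS(f^n)$ is itself an $f$-invariant simplex and every $\alpha$ in it is a reduction class for $f$. With that adjustment your argument goes through.
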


Moreover, we will often use the following.

\begin{lem}\label{lem2_5}
Let $\AA=\{\alpha_1, \dots, \alpha_p\}$ be a simplex of $\TT(N)$, and, for every $i \in \{1, \dots, p\}$, let $k_i \in \Z \setminus \{0\}$.
Set $g = t_{\alpha_1}^{k_1} \cdots t_{\alpha_p}^{k_p}$.
Then $\SS(g) = \AA$.
\end{lem}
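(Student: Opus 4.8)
The plan is to combine Proposition \ref{prop2_1} (which identifies $Z_\AA = \ker \Lambda_\AA$ as free abelian of rank $|\AA_\TT| = p$) with Theorem \ref{thm2_3} (uniqueness of the minimal adequate reduction system), and to verify the two defining conditions of $\SS(g)$ directly: that $\AA$ is an adequate reduction system for $g$, and that every class in $\AA$ is an essential reduction class. First I would observe that $g \in Z_\AA$, so $\Lambda_\AA(g) = 1$; in particular $g$ fixes each $\alpha_i$ and fixes every component $N_j$ of $N_\AA$, and the restriction $\Lambda_{j,\AA}(g)$ is the identity, hence periodic. Thus $\AA$ is an adequate reduction system for $g$ (with the exponent $n = 1$ of Theorem \ref{thm2_2} working directly). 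Since the $k_i$ are all nonzero, $g$ is non-periodic: if $g$ had finite order, then a suitable power would be trivial, forcing all $t_{\alpha_i}^{k_i m} = 1$ in the free abelian group $Z_\AA$, contradicting $k_i \neq 0$. So Theorem \ref{thm2_3} applies and gives $\SS(g) \subseteq \AA$, with $\SS(g)$ itself an adequate reduction system.

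It remains to show $\AA \subseteq \SS(g)$, i.e. that each $\alpha_i$ is an essential reduction class for $g$. Fix $i$ and let $\beta \in \CC(N)$ with $i(\alpha_i, \beta) \neq 0$; I must show $g^m(\beta) \neq \beta$ for all $m \in \Z \setminus \{0\}$. The idea is to pass to the surface cut along the \emph{other} curves: set $\AA' = \AA \setminus \{\alpha_i\}$ and consider $\Lambda_{\AA'}(g) \in \MM(N_{\AA'})$, which by Proposition \ref{prop2_1} equals $t_{\alpha_i}^{k_i}$ (a single nonzero power of a Dehn twist) on the component of $N_{\AA'}$ containing $\alpha_i$ — one must check $\beta$ can be isotoped off the curves of $\AA'$, which may fail, so instead it is cleaner to argue intersection numbers grow. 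The standard fact is that for a two-sided $\alpha$ and any $\beta$ with $i(\alpha,\beta) = d \neq 0$, one has $i(t_\alpha^m(\beta), \beta) \geq (|m| - 1)\, d^2$ (or at least that $i(t_\alpha^m(\beta),\alpha) = |m| d$ grows, distinguishing the classes). Combined with the fact that the twists $t_{\alpha_j}$ for $j \neq i$ do not affect the geometric intersection with $\alpha_i$ (they are supported in annuli disjoint from $\alpha_i$), one gets $i(g^m(\beta), \alpha_i) = |m k_i|\, i(\beta, \alpha_i) \to \infty$, hence $g^m(\beta) \neq \beta$ for $m \neq 0$. Therefore $\alpha_i \in \SS(g)$.

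Putting the two inclusions together yields $\SS(g) = \AA$, as claimed. The main obstacle I anticipate is the second inclusion: making rigorous the claim that the intersection number $i(g^m(\beta), \alpha_i)$ is exactly $|m k_i|\, i(\beta,\alpha_i)$, since one is composing several Dehn twists and must be careful that the twists along $\alpha_j$, $j \neq i$, genuinely commute with computing the intersection pairing against $\alpha_i$ — this requires choosing the representatives $a_j$ pairwise disjoint and realizing $i(\beta, \alpha_i)$ simultaneously, and then a surgery/bigon argument to see that twisting along the disjoint annuli around the $a_j$ leaves a curve that still meets $a_i$ minimally in $|m k_i|\, i(\beta,\alpha_i)$ points. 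An alternative that sidesteps the geometry is purely algebraic: use Lemma \ref{lem2_4}(2) together with the fact that $t_\alpha \in \SS$-data is detected by the centralizer, but the cleanest route for this lemma is the intersection-growth estimate, which is classical (cf. the orientable case in Birman--Lubotzky--McCarthy \cite{BiLuMc1}) and transfers to $N$ via the orientation double cover exactly as in the proof of Theorem \ref{thm2_3}.
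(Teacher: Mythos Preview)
Your argument that $\AA$ is an adequate reduction system for $g$ and that $g$ has infinite order is correct and yields $\SS(g) \subseteq \AA$ via Theorem~\ref{thm2_3}, exactly as in the paper.

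The reverse inclusion, however, contains a genuine error. The claimed formula $i(g^m(\beta), \alpha_i) = |m k_i|\, i(\beta, \alpha_i)$ is false: every Dehn twist in the product defining $g$ fixes $\alpha_i$ (the twist $t_{\alpha_i}$ fixes its own core curve, and the twists $t_{\alpha_j}$ for $j\neq i$ are supported away from $\alpha_i$), so $g(\alpha_i)=\alpha_i$ and hence $i(g^m(\beta), \alpha_i) = i(\beta, g^{-m}(\alpha_i)) = i(\beta, \alpha_i)$ for all $m$. This intersection number is constant, not growing, and cannot distinguish $g^m(\beta)$ from $\beta$. Your parenthetical assertion $i(t_\alpha^m(\beta),\alpha)=|m|\,d$ is wrong for the same reason. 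The other formula you mention, $i(t_\alpha^m(\beta),\beta)\ge(|m|-1)\,d^2$, is correct for a single twist, but extending it to a multitwist whose exponents $k_j$ may have mixed signs is not straightforward, so switching the argument to $i(g^m(\beta),\beta)$ would require substantially more work than you indicate.

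The paper sidesteps all of this by using the \emph{minimality} clause of Theorem~\ref{thm2_3} rather than the definition of essential reduction class. For each $i$, set $\AA_i=\AA\setminus\{\alpha_i\}$; this is again a reduction system for $g$, and on the component $N'$ of $N_{\AA_i}$ containing $a_i$ the reduction $\Lambda_{\AA_i}(g)$ restricts to $t_{a_i}^{k_i}$, which is neither periodic nor pseudo-Anosov. Hence $\AA_i$ is not adequate, so $\SS(g)\not\subseteq\AA_i$, and therefore $\alpha_i\in\SS(g)$. This gives $\AA\subseteq\SS(g)$ with no intersection-number computation at all.
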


\begin{proof}
We have $g(\alpha_i) = \alpha_i$ for all $i \in \{1, \dots, p\}$, hence $\AA$ is a reduction system for $g$.
The reduction $\Lambda_\AA(g)$ of $g$ along $\AA$ is the identity, hence $\AA$ is an adequate reduction system. 
Let $i \in \{1, \dots, p\}$.
Set $\AA_i = \AA \setminus \{\alpha_i\}$.
Note that $\AA_i$ is also a reduction system for $g$.
We choose disjoint representatives $a_j \in \alpha_j$, $j \in \{1, \dots, p\}$, we denote by $N_{\AA_i}$ the natural compactification of $N \setminus ( \cup_{j \neq i} a_j)$, and we denote by $N'$ the component of $N_{\AA_i}$ that contains $a_i$.
The restriction of $\Lambda_{\AA_i}(g)$ to $N'$ is $t_{a_i}^{k_i}$, which is neither periodic, nor pseudo-Anosov. 
Hence, $\AA_i$ is not an adequate reduction system.  
So, $\AA$ is the minimal adequate reduction system for $g$, that is, $\AA=\SS(g)$.
\end{proof}


\subsection{Abelian subgroups}

Let $\AA$ be a simplex of $\CC(N)$ and let $H$ be an abelian subgroup of $\MM(N)$.
We say that $\AA$ is a \emph{reduction system} for $H$ if $\AA$ is a reduction system for every element of $H$.
Similarly, we say that $\AA$ is an \emph{adequate reduction system} for $H$ if $\AA$ is an adequate reduction system for every element of $H$.
On the other hand, we set $\SS(H)=\bigcup_{f \in H} \SS(f)$.
The following can be proved exactly in the same way as Lemma 3.1 in Birman--Lubotzky--McCarthy \cite{BiLuMc1}.

\begin{lem}\label{lem2_6}
Let $H$ be an abelian subgroup of $\MM(N)$.
Then $\SS (H)$ is an adequate reduction system for $H$ and every adequate reduction system for $H$ contains $\SS(H)$.
\end{lem}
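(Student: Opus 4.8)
The plan is to reproduce, in the non-orientable setting, the argument of Birman--Lubotzky--McCarthy \cite[Lemma 3.1]{BiLuMc1}; all its ingredients are now available through Theorems \ref{thm2_2} and \ref{thm2_3} and Lemma \ref{lem2_4}. I would organize the proof in four steps. First I would check that $\SS(H)$ is actually a simplex of $\CC(N)$, i.e.\ that $i(\alpha,\beta)=0$ whenever $\alpha\in\SS(f)$ and $\beta\in\SS(g)$ with $f,g\in H$. Assume $i(\alpha,\beta)\neq 0$. Since $f$ and $g$ commute, Lemma \ref{lem2_4}(2) gives $g(\SS(f))=\SS(gfg^{-1})=\SS(f)$, so $g$ permutes the finite set $\SS(f)$ and some power $g^m$ with $m\neq 0$ fixes $\alpha$. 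But $\beta$ is an essential reduction class for $g$ and $i(\beta,\alpha)\neq 0$, so by the definition of an essential reduction class $g^m(\alpha)\neq\alpha$ for every $m\in\Z\setminus\{0\}$, a contradiction. Hence $\SS(H)$ is a simplex.

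Second, I would note that $\SS(H)$ is a reduction system for $H$: for $f\in H$, commutativity together with Lemma \ref{lem2_4}(2) gives $f(\SS(g))=\SS(fgf^{-1})=\SS(g)$ for all $g\in H$, whence $f(\SS(H))=\bigcup_{g\in H}f(\SS(g))=\SS(H)$. Third, and this is where the real content lies, I would show that $\SS(H)$ is an adequate reduction system for $H$. Fix $f\in H$. By Theorem \ref{thm2_3} (and the convention $\SS(f)=\emptyset$ when $f$ is periodic or pseudo-Anosov), $\SS(f)$ is an adequate reduction system for $f$, and by Step 2 the larger simplex $\SS(H)\supseteq\SS(f)$ is again a reduction system for $f$. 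So it suffices to establish the sub-lemma that enlarging an adequate reduction system of $f$ to any reduction system of $f$ still yields an adequate reduction system of $f$; granting this, $\SS(H)$ is adequate for every $f\in H$, hence for $H$.

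To prove that sub-lemma I would argue on the connected components of $N_{\SS(f)}$. Choose $n\in\Z\setminus\{0\}$ with $f^n$ preserving every component and, after passing to a further multiple, fixing every class of $\SS(H)$ (which is a reduction system for $f$ by Step 2). On a component $P$ on which $\Lambda_P(f^n)$ is pseudo-Anosov there can be no class of $\SS(H)\setminus\SS(f)$, since a power of a pseudo-Anosov map fixes no generic circle of $P$; hence every class of $\SS(H)\setminus\SS(f)$ lies in a component $P$ on which $\Lambda_P(f^n)$ is periodic, say of order $k$. Cutting $P$ further along those classes and restricting $f^{nk}$, which equals the identity on $P$, we obtain the identity — in particular a periodic map — on each of the new pieces, while the pseudo-Anosov pieces are left untouched. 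Thus $\SS(H)$ is an adequate reduction system for $f$.

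Finally, for the minimality assertion, let $\BB$ be any adequate reduction system for $H$. Then $\BB$ is an adequate reduction system for each $f\in H$, so Theorem \ref{thm2_3} gives $\BB\supseteq\SS(f)$ when $f$ is non-periodic reducible, while $\SS(f)=\emptyset\subseteq\BB$ otherwise; taking the union over $f\in H$ yields $\BB\supseteq\SS(H)$. The main obstacle is the adequacy step: although the periodic/pseudo-Anosov dichotomy on subsurface pieces is standard, one must be careful in choosing a single power $n$ that simultaneously preserves all relevant components and fixes all the relevant classes, and one should separately dispose of the degenerate case $\SS(H)=\emptyset$ (where the statement is vacuous). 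Everything else is formal once Theorems \ref{thm2_2} and \ref{thm2_3} are in hand.
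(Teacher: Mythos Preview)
Your proposal is correct and matches the paper's approach exactly: the paper gives no proof beyond the sentence ``The following can be proved exactly in the same way as Lemma 3.1 in Birman--Lubotzky--McCarthy \cite{BiLuMc1},'' and what you have written is precisely a careful transcription of that argument into the non-orientable setting using Theorems \ref{thm2_2}, \ref{thm2_3} and Lemma \ref{lem2_4}. There is nothing to add; the only cosmetic point is that your sub-lemma in Step 3 (enlarging an adequate reduction system along further reduction classes preserves adequacy) is exactly Lemma 2.6 of \cite{BiLuMc1}, which you have reproved rather than cited.
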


The \emph{rank} of $N$ is defined by $\rk (N) = \frac{3}{2} \rho + n -3$ if  $\rho$ is even and by $\rk (N) = \frac{1}{2} (3 \rho-1) + n -3$ if  $\rho$ is odd.
Let $\AA$ be a simplex of $\CC(N)$.
As ever, we set $\AA_\TT = \AA \cap \TT (N)$.
For $k \in \Z \setminus \{0 \}$, we denote by $Z_\AA [k]$ the subgroup of $\MM (N)$ generated by $\{ t_\alpha ^k \mid \alpha \in \AA_\TT \}$.
By Proposition \ref{prop2_1}, $Z_\AA [k]$ is a free abelian group of rank $|\AA_\TT|$.
The rank of an abelian group $H$ is denoted by $\rk (H)$.

Kuno \cite{Kuno1} proved that the maximal rank of an abelian subgroup of $\MM(N)$ is precisely $\rk (N)$.
Her proof is largely inspired by  Birman--Lubotzky--McCarthy \cite{BiLuMc1}.
In the present paper we need the following more precise statement whose proof is also largely inspired by Birman--Lubotzky--McCarthy \cite{BiLuMc1}.

\begin{prop}\label{prop2_7}
Let $H$ be an abelian subgroup of $\MM (N)$.
Then $\rk (H) \le \rk (N)$.
Moreover, if $\rk (H) = \rk (N)$, then the following claims hold.
\begin{itemize}
\item[(1)]
There exists a simplex $\AA$ in $\TT (N)$ such that $|\AA|=\rk (N)$ and $\SS (H) \cap \TT (N) \subset \AA$.
\item[(2)]
There exists $k \ge 1$ such that $Z_{\SS (H)} [k] \subset H$.
\item[(3)]
None of the connected components of $N_{\SS (H)}$ is homeomorphic to $N_{2,1}$.
\end{itemize}
\end{prop}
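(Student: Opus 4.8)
The plan is to follow the cutting strategy of Birman--Lubotzky--McCarthy \cite{BiLuMc1}. We may assume $\SS(H) \neq \emptyset$: if $\SS(H) = \emptyset$ then every element of $H$ is periodic or pseudo-Anosov, and such an abelian group has rank at most $1 < \rk(N)$. By Lemma~\ref{lem2_6}, $\SS(H)$ is an adequate reduction system for $H$; in particular every element of $H$ preserves $\SS(H)$, so $H \subseteq \MM_{\SS(H)}(N)$. Let $N_1, \dots, N_\ell$ be the connected components of $N_{\SS(H)}$. Since each element of $H$ permutes $N_1, \dots, N_\ell$ via $\Lambda_{\SS(H)}$, the subgroup $H_0 = \{ f \in H \mid \Lambda_{\SS(H)}(f)(N_i) = N_i \text{ for all } i \}$ has finite index in $H$, so $\rk(H_0) = \rk(H)$; for each $i$ let $\Lambda_i \colon H_0 \to \MM(N_i)$ be the associated restriction homomorphism. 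Since $\SS(H)$ is adequate for every element of $H$ and the adequacy condition is insensitive to the chosen power, every element of $\Lambda_i(H_0)$ is periodic or pseudo-Anosov.

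For a simplex $\AA$ of $\CC(N)$, write $p(\AA)$ for the number of connected components of $N_\AA$ supporting a pseudo-Anosov mapping class. The next step is the estimate $\rk(H) \le |\SS(H) \cap \TT(N)| + p(\SS(H))$. The kernel of $f \mapsto (\Lambda_i(f))_i \colon H_0 \to \prod_{i=1}^\ell \MM(N_i)$ is $H_0 \cap Z_{\SS(H)}$ by Proposition~\ref{prop2_1}, so by additivity of rank (and Proposition~\ref{prop2_1} once more for the rank of $Z_{\SS(H)}$),
\[
\rk(H_0) \le \rk(H_0 \cap Z_{\SS(H)}) + \sum_{i=1}^\ell \rk(\Lambda_i(H_0)) \le |\SS(H) \cap \TT(N)| + \sum_{i=1}^\ell \rk(\Lambda_i(H_0)).
\]
Now $\rk(\Lambda_i(H_0)) \le 1$, and $\rk(\Lambda_i(H_0)) = 0$ unless $N_i$ supports a pseudo-Anosov: if $\Lambda_i(H_0)$ contains an element $g$ of infinite order then $g$ is pseudo-Anosov and $\Lambda_i(H_0)$, being abelian, lies in the centralizer of $g$, which is virtually cyclic (classical; e.g.\ via the orientation double cover); otherwise $\Lambda_i(H_0)$ is a torsion group. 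Hence $\sum_i \rk(\Lambda_i(H_0)) \le p(\SS(H))$ and the estimate follows. Combining it, at $\AA = \SS(H)$, with the purely topological inequality $|\AA \cap \TT(N)| + p(\AA) \le \rk(N)$, valid for every simplex $\AA$ of $\CC(N)$, yields $\rk(H) \le \rk(N)$.

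Now assume $\rk(H) = \rk(N)$; then the chain
\[
\rk(H) = \rk(H_0) \le \rk(H_0 \cap Z_{\SS(H)}) + \sum_i \rk(\Lambda_i(H_0)) \le |\SS(H) \cap \TT(N)| + p(\SS(H)) \le \rk(N) = \rk(H)
\]
consists of equalities. From $\rk(H_0 \cap Z_{\SS(H)}) = |\SS(H) \cap \TT(N)|$ it follows that $H_0 \cap Z_{\SS(H)}$ has finite index, say $k$, in the free abelian group $Z_{\SS(H)}$, hence contains $Z_{\SS(H)}[k]$; since $H_0 \cap Z_{\SS(H)} \subseteq H$, this is claim~(2). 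Equality also forces $\rk(\Lambda_i(H_0)) = 1$ whenever $N_i$ supports a pseudo-Anosov, and $|\SS(H) \cap \TT(N)| + p(\SS(H)) = \rk(N)$. For claim~(1), choose in each pseudo-Anosov-supporting $N_i$ a two-sided generic circle $b_i$ in $N_i$ (one exists: a pseudo-Anosov on $N_i$ has no finite orbit in $\CC(N_i)$, which forces $\TT(N_i) \neq \emptyset$), and put $\AA = (\SS(H) \cap \TT(N)) \cup \{ [b_i] \}_i$; the added classes are pairwise distinct, distinct from those of $\SS(H)$, and realised by pairwise disjoint circles (the $b_i$ lying in distinct components $N_i$), so $\AA$ is a simplex of $\TT(N)$ with $\SS(H) \cap \TT(N) \subseteq \AA$ and $|\AA| = |\SS(H) \cap \TT(N)| + p(\SS(H)) = \rk(N)$. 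For claim~(3), suppose some $N_i \cong N_{2,1}$; there is a two-sided generic circle $c$ in $N_i$ such that cutting $N_i$ along $c$ yields a pair of pants. Then $\AA' = \SS(H) \cup \{ [c] \}$ is a simplex of $\CC(N)$ with $|\AA' \cap \TT(N)| = |\SS(H) \cap \TT(N)| + 1$, and $N_{\AA'}$ is $N_{\SS(H)}$ with $N_i$ replaced by a pair of pants, so $p(\AA') = p(\SS(H))$ (neither $N_{2,1}$ nor a pair of pants supports a pseudo-Anosov). The topological inequality at $\AA'$ then reads $\rk(N) + 1 \le \rk(N)$, a contradiction; hence no component of $N_{\SS(H)}$ is homeomorphic to $N_{2,1}$.

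The main obstacle is the topological inequality $|\AA \cap \TT(N)| + p(\AA) \le \rk(N)$. One way to obtain it is to note that, for any simplex $\AA$, the Dehn twists along the two-sided classes of $\AA$ together with one pseudo-Anosov pushed in from each pseudo-Anosov-supporting component of $N_\AA$ generate an abelian subgroup of $\MM(N)$ of rank exactly $|\AA \cap \TT(N)| + p(\AA)$, and then to invoke Kuno's theorem \cite{Kuno1} that abelian subgroups of $\MM(N)$ have rank at most $\rk(N)$. Alternatively, in the spirit of \cite{BiLuMc1}, one proves it directly by induction on $-\chi(N)$, tracking how the two sides change when a generic circle (one- or two-sided, separating or not) is added to $\AA$; the delicate point, and where the precise value of $\rk(N)$ enters, is the accounting for the low-complexity pieces, in particular the one-holed Klein bottle $N_{2,1}$ --- precisely the phenomenon isolated in claim~(3).
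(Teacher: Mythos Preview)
Your argument is correct and follows the same Birman--Lubotzky--McCarthy cutting strategy as the paper. The one substantive difference lies in how you handle the key inequality $|\SS(H)\cap\TT(N)|+p(\SS(H))\le\rk(N)$. You defer it to Kuno's theorem~\cite{Kuno1} (or to an induction you do not carry out), whereas the paper proves it directly and elementarily: in each component $N_i$ of $N_{\SS(H)}$ not homeomorphic to $S_{0,3}$ or $N_{1,2}$ it chooses a class $\beta_i\in\TT(N_i)$ (which exists by Lemma~\ref{lem2_10}\,(3)), observes that $\AA=(\SS(H)\cap\TT(N))\sqcup\{\beta_i\}$ is a simplex of $\TT(N)$, and invokes the simplex bound of Corollary~\ref{corl2_9}. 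This single simplex does double duty: it proves the rank bound and is literally the $\AA$ required for claim~(1). In the equality case each such $N_i$ then carries a pseudo-Anosov restriction of some element of $H$, and since $\MM(N_{2,1})$ contains no pseudo-Anosov (Lemma~\ref{lem2_10}\,(2)), claim~(3) follows immediately---more directly than your contradiction via the topological inequality. Your route is sound but less self-contained; also, your justification that $\TT(N_i)\neq\emptyset$ whenever $N_i$ supports a pseudo-Anosov (``no finite orbit in $\CC(N_i)$ forces $\TT(N_i)\neq\emptyset$'') is really an appeal to Lemma~\ref{lem2_10}\,(3) and would be clearer stated as such.
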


\begin{rem}
There are classes of circles that are not included in any simplex of $\TT(N)$ of cardinality $\rk (N)$.
These classes will be considered in Section \ref{Sec3}.
In particular, Part (1) of the above proposition is not immediate. 
\end{rem}

The rest of this subsection is dedicated to the proof of Proposition \ref{prop2_7}.

Let $\AA$ be a simplex of $\CC (N)$.
We say that $\AA$ is a \emph{pants decomposition} if $N_\AA$ is a disjoint union of pairs of pants. 
It is easily seen that any simplex of $\CC (N)$ is included in a pants decomposition.
The following lemma is partially proved in Irmak \cite[Lemma 2.2]{Irmak1}.

\begin{lem}\label{lem2_8}
Let $\AA$ be a simplex of $\CC (N)$.
Set $\AA_\TT = \AA \cap \TT (N)$ and $\AA_\OO = \AA \setminus \AA_\TT$.
Then 
\[
|\AA_\TT|  + \frac{1}{2} |\AA_\OO|\le \frac{3}{2} \rho + n -3\,.
\]
Moreover, equality holds if and only if $\AA$ is a pants decomposition.
\end{lem}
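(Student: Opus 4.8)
The plan is to prove Lemma \ref{lem2_8} by a Euler-characteristic bookkeeping argument, exploiting the fact that cutting $N$ along a simplex of $\CC(N)$ produces a disjoint union of surfaces whose total complexity is controlled, and that maximizing the number of cutting curves forces every piece to be a pair of pants. First I would record that if $\AA$ is a pants decomposition then a standard count gives the exact value of $|\AA_\TT| + \frac12 |\AA_\OO|$: cutting along a two-sided curve raises the number of boundary components by $2$ (or splits a component), while cutting along a one-sided curve caps off a M\"obius band and effectively "uses up" a crosscap, so it contributes only $\frac12$ to the tally relative to the orientable-genus count. Concretely, I would compute $\chi(N) = 2 - \rho - n$ and note that each pair of pants has Euler characteristic $-1$, so $N_\AA$ being a union of pants means the number of pants is $-\chi(N) = \rho + n - 2$, and then relate the number of pants to $|\AA_\TT|$, $|\AA_\OO|$, and $n$ via the standard edge-counting on the dual graph (each pants has three boundary circles; each circle of $\AA_\TT$ is glued to two pants-boundaries, each circle of $\AA_\OO$ to one, and each of the $n$ original boundary components to one). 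This yields $3(\rho+n-2) = 2|\AA_\TT| + |\AA_\OO| + n$, i.e.\ $|\AA_\TT| + \frac12|\AA_\OO| = \frac32\rho + n - 3$, which is the equality case.

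Next I would handle a general simplex $\AA$ by extending it to a pants decomposition $\AA \subseteq \hat\AA$, which is always possible as noted in the excerpt. Writing $\hat\AA_\TT = \hat\AA \cap \TT(N)$ and $\hat\AA_\OO = \hat\AA \setminus \hat\AA_\TT$, the equality case just established gives $|\hat\AA_\TT| + \frac12|\hat\AA_\OO| = \frac32\rho + n - 3$. Since $\AA_\TT \subseteq \hat\AA_\TT$ and $\AA_\OO \subseteq \hat\AA_\OO$, we immediately get $|\AA_\TT| + \frac12|\AA_\OO| \le \frac32\rho + n - 3$, which is the inequality. For the "moreover" clause, if equality holds for $\AA$ then $|\hat\AA_\TT| = |\AA_\TT|$ and $|\hat\AA_\OO| = |\AA_\OO|$, so $\hat\AA = \AA$ and $\AA$ is itself a pants decomposition; conversely a pants decomposition achieves equality by the computation above.

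**The main obstacle** I anticipate is making the edge-counting on the dual graph fully rigorous in the non-orientable setting — in particular, being careful that a two-sided curve which is non-separating still contributes two pants-boundaries (possibly to the same pants) and that the compactification $N_\AA$ correctly turns a one-sided curve into a single new boundary circle bounding a cross-cap that has been removed, so that the Euler characteristic additivity $\chi(N) = \sum_i \chi(P_i)$ holds after the cut (cutting along a circle does not change Euler characteristic, and capping a M\"obius-band neighborhood of a one-sided curve with a disk raises $\chi$ by $1$, which is exactly the discrepancy between $\rho$ crosscaps and the "orientable genus" contribution — this is the source of the $\frac12$ coefficient). A clean way to avoid case analysis is to phrase everything in terms of the complexity $\xi(N) := -\chi(N)$ is not quite enough by itself, so I would instead argue directly: $N_\AA$ is a union of $\ell$ pieces, each with negative Euler characteristic when $\AA$ is not yet a pants decomposition one piece has complexity $\ge 2$ and can be cut further, strictly increasing $|\AA_\TT| + \frac12|\AA_\OO|$; iterating terminates at a pants decomposition, giving the strict inequality unless $\AA$ was already a pants decomposition. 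I expect the Irmak reference \cite[Lemma 2.2]{Irmak1} supplies the orientable-type half of this count, so the new content is just the treatment of the one-sided curves, which the $\frac12$ weighting encodes.
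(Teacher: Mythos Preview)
Your proposal is correct and is essentially the paper's own proof: extend $\AA$ to a pants decomposition $\hat\AA$, use the Euler-characteristic identity $\chi(N)=-p$ together with the boundary count $3p = 2|\hat\AA_\TT| + |\hat\AA_\OO| + n$ to get the exact value for $\hat\AA$, and then the containment $\AA\subseteq\hat\AA$ gives both the inequality and the equality characterization. Your ``main obstacle'' paragraph overthinks the non-orientable subtlety --- cutting along \emph{any} circle (two-sided or one-sided) leaves $\chi$ unchanged, so $\chi(N_{\hat\AA})=\chi(N)$ directly, and no capping or disk argument is needed.
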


\begin{proof}
We choose a pants decomposition $\AA'$ which contains $\AA$.
Clearly, $|\AA'_\TT| \ge |\AA_\TT|$ and $|\AA'_\OO| \ge |\AA_\OO|$.
Let $p$ be the number of connected components of $N_{\AA'}$.
Since every connected component of $N_{\AA'}$ is a pair of pants, we have $2 |\AA'_\TT| + |\AA'_\OO| + n = 3p$ and $2 - \rho -n = \chi(N) = -p$.
These equalities imply that $|\AA_\TT'| + \frac{1}{2} |\AA_\OO'| =\frac{3}{2} \rho + n -3$.
It follows that $|\AA_\TT| + \frac{1}{2} |\AA_\OO| \le \frac{3}{2} \rho + n  -3$, and equality holds if and only if $\AA=\AA'$.
\end{proof}

\begin{corl}\label{corl2_9}
If $\AA$ is a simplex of $\TT (N)$, then $|\AA| \le \rk (N)$.
\end{corl}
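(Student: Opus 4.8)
The plan is simply to invoke Lemma~\ref{lem2_8}. Since $\AA$ is a simplex of $\TT(N)$, in the notation of that lemma we have $\AA_\TT=\AA$ and $\AA_\OO=\emptyset$, so the inequality supplied by Lemma~\ref{lem2_8} specializes to $|\AA|\le \frac{3}{2}\rho+n-3$. This already finishes the case $\rho$ even, because then $\rk(N)=\frac{3}{2}\rho+n-3$ by definition.

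When $\rho$ is odd the quantity $\frac{3}{2}\rho+n-3$ is a half-integer, equal to $\rk(N)+\frac12$ via the elementary identity $\frac{3}{2}\rho+n-3-\frac12=\frac12(3\rho-1)+n-3$. Since $|\AA|$ is an integer, the bound $|\AA|\le\rk(N)+\frac12$ forces $|\AA|\le\rk(N)$, which is what we want.

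There is no genuine obstacle here; the proof is a one-line consequence of Lemma~\ref{lem2_8} together with a parity observation. The only point worth keeping in mind is that a simplex $\AA$ of $\TT(N)$ is in general not itself a pants decomposition — pants decompositions of a non-orientable surface typically must contain one-sided circles — so the equality case of Lemma~\ref{lem2_8} need not apply to $\AA$. This is harmless: we only use the inequality, simply discarding the nonnegative term $\frac12|\AA_\OO|$ that accounts for the one-sided circles needed to extend $\AA$ to a pants decomposition.
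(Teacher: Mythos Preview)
Your proof is correct and follows essentially the same argument as the paper: apply Lemma~\ref{lem2_8} with $\AA_\OO=\emptyset$, then use the parity of $\rho$ to pass from the bound $\frac{3}{2}\rho+n-3$ to $\rk(N)$. The extra remark about pants decompositions is accurate but not needed for the argument.
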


\begin{proof}
We have $\AA_\TT = \AA$ and $\AA_\OO = \emptyset$.
By Lemma \ref{lem2_8}, it follows that $|\AA| \le \frac{3}{2} \rho +n -3$.
If $\rho$ is even, then the right hand side of this inequality is $\rk(N)$, hence $|\AA| \le \rk (N)$.
If $\rho$ is odd, then the right hand side of this inequality is not an integer, but the greatest integer less than the right hand side is $\rk (N) = \frac{1}{2} (3 \rho -1) + n -3$.
So, we have $|\AA| \le \rk (N)$ in this case, too. 
\end{proof}

\begin{rem}
There are simplices in $\TT (N)$ of cardinality $\rk (N)$. 
Their construction is left to the reader.
\end{rem}

For $\rho, n \ge 0$, we denote by $S_{\rho, n}$ the orientable surface of genus $\rho$ and $n$ boundary components. 
The first part of the following lemma is well-known.
The second part is proved in Scharlemann \cite{Schar1} and Stuko \cite{Stuko1}.
The third part is easy to prove.

\begin{lem}\label{lem2_10}
\begin{itemize}
\item[(1)]
The mapping class groups $\MM (S_{0,3})$ and $\MM (N_{1,2})$ are finite.
\item[(2)]
The set $\TT (N_{2,1})$ is reduced to a unique element, $\beta$, and $\MM(N_{2,1}) \simeq \Z \rtimes \Z/2\Z$, where the copy of $\Z$ is generated by $t_\beta$.
\item[(3)]
If $M$ is a connected surface, with negative Euler characteristic, different from $S_{0,3}$ and $N_{1,2}$, then $\TT (M)$ is non-empty. 
\end{itemize}
\end{lem}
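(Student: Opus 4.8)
The plan is to treat the three parts separately, deriving (1) and (2) from the structural facts already available together with the cited computations, and proving (3) by a direct Euler-characteristic argument; the only genuinely delicate point will be the precise form of $\MM(N_{2,1})$ in (2).

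\emph{Part (1).} Both $\MM(S_{0,3})$ and $\MM(N_{1,2})$ are classical, and I would quote them, but finiteness can also be seen directly. Since $\CC(S_{0,3})=\emptyset$, a self-homeomorphism of $S_{0,3}$ is determined up to isotopy by the permutation it induces on the three boundary components and by whether it preserves or reverses orientation, so $\MM(S_{0,3})$ embeds in a finite group. For $N_{1,2}$, a short Euler-characteristic count gives $\TT(N_{1,2})=\emptyset$ and $\CC(N_{1,2})=\{c\}$ with $c$ one-sided, $N_{1,2}$ cut along $c$ being $S_{0,3}$; hence $\MM(N_{1,2})$ stabilizes $\{c\}$, and the reduction homomorphism $\Lambda_{\{c\}}$ of Proposition \ref{prop2_1} is injective because $\{c\}_\TT=\emptyset$, with image inside $\MM(S_{0,3})$, which is finite.

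\emph{Part (2).} I would first show $\TT(N_{2,1})=\{\beta\}$. A generic two-sided circle $\beta$ in $N_{2,1}$ is non-separating: cutting $N_{2,1}$, which has Euler characteristic $-1$, along a separating $\beta$ produces two pieces whose Euler characteristics sum to $-1$, so one of them has nonnegative Euler characteristic and is therefore a disk, an annulus or a Möbius band, contradicting genericity. Cutting along a non-separating $\beta$ then yields a connected surface with Euler characteristic $-1$ and three boundary components, i.e. a pair of pants, and a change-of-coordinates argument shows $\beta$ is unique up to isotopy; this is Scharlemann \cite{Schar1}. Since $\MM(N_{2,1})$ permutes $\TT(N_{2,1})=\{\beta\}$, it stabilizes $\beta$, so by Proposition \ref{prop2_1} we have $\Ker\Lambda_{\{\beta\}}=Z_{\{\beta\}}=\langle t_\beta\rangle\cong\Z$, while $\Lambda_{\{\beta\}}$ lands in $\MM(S_{0,3})$, finite by part (1). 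Thus $\MM(N_{2,1})$ is an extension of a finite group by $\Z$. The hard part will be to identify this extension: that the quotient is exactly $\Z/2\Z$ and that it splits, via an orientation-reversing involution conjugating $t_\beta$ to $t_\beta^{-1}$, so that $\MM(N_{2,1})\cong\Z\rtimes\Z/2\Z$, is Stukow's computation \cite{Stuko1}, which I would invoke rather than redo.

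\emph{Part (3).} Here I would argue directly. If $M$ is orientable with $\chi(M)<0$ and $M\neq S_{0,3}$, then $M$ is not one of the surfaces without essential simple closed curves (sphere, disk, annulus, pair of pants), so it carries an essential simple closed curve, which is automatically two-sided and generic; hence $\TT(M)\neq\emptyset$. Now let $M=N_{g,m}$ be non-orientable with $\chi(M)=2-g-m<0$ and $M\neq N_{1,2}$. If $g\geq 2$, realize $M$ as a Klein bottle with $g-2$ extra crosscaps and $m$ holes and let $\gamma$ be the two-sided circle of the Klein bottle part whose complement there is an annulus; then $\gamma$ is non-separating in $M$, with complement $S_{0,m+2}$ if $g=2$ and $N_{g-2,\,m+2}$ if $g\geq 3$, and in either case the complement is neither a disk, an annulus, nor a Möbius band (using $\chi(M)<0$), so $\gamma$ is generic and $\TT(M)\neq\emptyset$. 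If $g=1$, then $\chi(M)<0$ forces $m\geq 2$ and $M\neq N_{1,2}$ forces $m\geq 3$; take a two-sided circle $d$ bounding a pair of pants that contains exactly two boundary components of $M$, so the other side of $d$ is $N_{1,m-1}$ with $m-1\geq 2$, which is not a disk, annulus, or Möbius band, whence $d$ is generic. The only thing to watch is the exclusion of $N_{1,2}$ in the genus-one case — it is forced precisely because cutting off a pair of pants there would leave a Möbius band — and otherwise the argument is routine bookkeeping with Euler characteristics, consistent with the paper's claim that this part is easy.
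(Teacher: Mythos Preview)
Your proposal is correct and aligns with the paper's treatment: the paper does not actually prove this lemma but simply states that (1) is well-known, (2) is in Scharlemann \cite{Schar1} and Stukow \cite{Stuko1}, and (3) is easy to prove. Your write-up supplies the details behind these attributions in a manner fully consistent with them, using Proposition \ref{prop2_1} for the reduction arguments and a direct Euler-characteristic case analysis for (3).

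Two very minor remarks. First, you invoke Proposition \ref{prop2_1} for $N_{1,2}$ and $N_{2,1}$, whereas in the paper that proposition is stated for the ambient surface $N$ of genus $\rho\ge 5$; the underlying result of Stukow holds generally, so this is only a matter of phrasing the citation. Second, in Part (2) the phrase ``orientation-reversing involution'' is awkward on a non-orientable surface; the relevant order-two generator in Stukow's presentation is a crosscap slide (or boundary slide) that inverts $t_\beta$ under conjugation. Since you defer the identification of the extension to \cite{Stuko1} anyway, this does not affect the validity of your argument.
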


The following lemma is well-known for orientable surfaces (see Fathi--Laudenbach--Poénaru \cite[Thm. III, Exp. 12]{FaLaPo1} and 
Ivanov \cite[Lem. 8.13]{Ivan1}).
It can be easily proved for a non-orientable surface $N$ from the fact the the lift of a pseudo-Anosov element of $\MM(N)$ to the mapping class group of the orientable double-covering of $N$ is a pseudo-Anosov element (see Wu \cite{Wu1}).

\begin{lem}\label{lem2_11}
Let $M$ be a connected surface (orientable or non-orientable), and let $f \in \MM(M)$ be a pseudo-Anosov element. 
Then the centralizer of $f$ in $\MM(M)$ is virtually cyclic.
\end{lem}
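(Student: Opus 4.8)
The plan is to reduce the statement to the orientable case, which is the classical result of Fathi--Laudenbach--Po\'enaru and Ivanov cited above, by passing to the orientable double covering. Let $\pi : \widetilde M \to M$ be the orientable double covering of $M$. Recall from Wu \cite{Wu1} that every $f \in \MM(M)$ lifts to a mapping class $\widetilde f$ of $\widetilde M$ that commutes with the covering involution $\sigma$, that this lift is well-defined up to composition with $\sigma$, and that if $f$ is pseudo-Anosov then $\widetilde f$ is pseudo-Anosov. Choosing such a lift for our fixed pseudo-Anosov $f$, we obtain a map from the centralizer $C_{\MM(M)}(f)$ into $\MM(\widetilde M)$, well-defined up to the ambiguity by $\sigma$; more precisely, sending $g$ to (any) lift $\widetilde g$ gives a subgroup $\widetilde C$ of the centralizer of $\langle \widetilde f, \sigma\rangle$ in $\MM(\widetilde M)$, and the natural map $\widetilde C \to C_{\MM(M)}(f)$ is surjective with kernel contained in $\langle \sigma \rangle$, hence of order at most $2$.

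Next I would observe that $\widetilde C$ is contained in the centralizer of $\widetilde f$ in $\MM(\widetilde M)$, which by Lemma 8.13 of Ivanov \cite{Ivan1} (together with Exp.~12 of \cite{FaLaPo1}) is virtually cyclic, since $\widetilde f$ is pseudo-Anosov on the orientable surface $\widetilde M$. Therefore $\widetilde C$ is virtually cyclic. Since $C_{\MM(M)}(f)$ is the quotient of $\widetilde C$ by a subgroup of order at most $2$, it is a quotient of a virtually cyclic group by a finite normal subgroup, hence is itself virtually cyclic. This completes the argument.

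The only point requiring care — and the step I expect to be the main technical obstacle — is the bookkeeping around the non-uniqueness of the lift and, if one wants to be fully rigorous, the possibility that some $g \in C_{\MM(M)}(f)$ lifts to an element conjugating $\widetilde f$ to $\sigma \widetilde f$ rather than fixing it. One handles this by first replacing $C_{\MM(M)}(f)$ by its index-at-most-$2$ subgroup $C_0$ of classes admitting a lift that genuinely centralizes $\widetilde f$ (one checks that conjugation by any lift of $g$ sends $\widetilde f$ to $\widetilde f$ or to $\sigma\widetilde f$, giving a homomorphism $C_{\MM(M)}(f) \to \Z/2\Z$ whose kernel is $C_0$), and then running the above reduction on $C_0$; being virtually cyclic is inherited by finite-index overgroups. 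In any event, the essential content is just: pseudo-Anosov lifts to pseudo-Anosov (Wu), the orientable case is known (Ivanov, FLP), and virtual cyclicity is stable under the finite-kernel, finite-cokernel operations involved.
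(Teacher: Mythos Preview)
Your approach is correct and is exactly the one the paper indicates: reduce to the orientable case via the orientation double cover, using Wu's result that a pseudo-Anosov lifts to a pseudo-Anosov, and then invoke Ivanov/FLP. In fact you supply considerably more detail than the paper, which gives no argument beyond that one-line sketch; your bookkeeping around the $\sigma$-ambiguity and the index-$\le 2$ subgroup $C_0$ is precisely the sort of care the paper's ``easily proved'' elides.
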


\begin{proof}[Proof of Proposition \ref{prop2_7}]
Let $H$ be an abelian subgroup of $\MM(N)$.
Set $\SS = \SS(H)$ and $\SS_\TT = \SS \cap \TT(N)$.
Denote by $\Lambda_\SS : \MM_\SS(N) \to \MM(N_\SS)$ the reduction homomorphism along $\SS$.
Note that $H \subset \MM_\SS(N)$ since $\SS$ is a reduction system for $H$.
By Proposition \ref{prop2_1} we have the following short exact sequence 
\[
1 \to H \cap Z_\SS \longrightarrow H \longrightarrow \Lambda_\SS(H) \to 1\,,
\]
hence $\rk (H) = \rk(H \cap Z_\SS) + \rk(\Lambda_\SS(H))$.
Moreover, again by Proposition \ref{prop2_1}, $\rk(H \cap Z_\SS) \le \rk(Z_\SS) = |\SS_\TT|$.

We  denote by $\Gamma (N_\SS)$ the set of connected components of $N_\SS$, by $\SSS(\Gamma(N_\SS))$ the group of permutations of $\Gamma(N_\SS)$, and by $\varphi: \MM(N_\SS) \to \SSS(\Gamma(N_\SS))$ the natural homomorphism.
Note that $\rk(\Lambda_\SS(H) \cap \Ker\,\varphi)=\rk (\Lambda_\SS(H))$ since $\Lambda_\SS(H) \cap \Ker\,\varphi$ is of finite index in $\Lambda_\SS(H)$.
Let $f \in \Lambda_\SS(H) \cap \Ker\,\varphi$ and let $N'$ be a connected component of $N_\SS$.
Let $f' \in H$ such that $f=\Lambda_\SS(f')$.
The set $\SS$ is an adequate reduction system for $f'$, hence the restriction of $f$ to $N'$ is either pseudo-Anosov or periodic.   
Moreover, by Lemma \ref{lem2_10}, $\MM(N')$ does not contain any pseudo-Anosov element if $N'$ is homeomorphic to $S_{0,3}$ or $N_{1,2}$, and, by Lemma \ref{lem2_11}, the centralizer of a pseudo-Anosov element of $\MM (N')$ is virtually cyclic. 
Let $N_1, \dots, N_\ell$ be the connected components of $N_\SS$ that are not homeomorphic to $S_{0,3}$ or $N_{1,2}$.
Then, by the above, $\rk(\Lambda_\SS(H)) = \rk(\Lambda_\SS(H) \cap \Ker\,\varphi) \le \ell$.

Let $i \in \{1, \dots, \ell\}$.
By Lemma \ref{lem2_10}, we have $\TT(N_i) \neq \emptyset$, hence we can choose a class $\beta_i \in \TT(N_i)$.
We set $\BB=\{\beta_1, \dots, \beta_\ell\}$ and $\AA=\SS_\TT \sqcup \BB$. 
Then $\AA$ is a simplex of $\TT(N)$, hence, by Corollary \ref{corl2_9}, we have $|\AA| \le \rk(N)$, and therefore  
\[
\rk(H) = \rk(H \cap Z_\SS) + \rk(\Lambda_\SS(H)) \le |\SS_\TT| + |\BB| = |\AA| \le \rk(N)\,.
\]

Assume that $\rk(H) = \rk(N)$.
Then $|\AA| = \rk(N)$ and $\SS_\TT = \SS(H) \cap \TT (N) \subset \AA$.
Moreover, $H \cap Z_\SS$ is of finite index in $Z_\SS$, hence there exists $k \ge 1$ such that $Z_\SS[k] \subset H \cap Z_\SS$.
Finally, for every $i \in \{1, \dots, \ell\}$, there exists $f \in \Lambda_\SS(H) \cap \Ker\, \varphi$ such that the restriction of $f$ to $N_i$ is pseudo-Anosov. 
By Lemma \ref{lem2_10}, this implies that $N_i$ cannot be homeomorphic to $N_{2,1}$.
\end{proof}


\section{Super-injective simplicial maps}\label{Sec3}

Let $\BB$ be a subset of $\CC(N)$.
A map $\lambda : \BB \to \BB$ is called a \emph{super-injective simplicial map} if the condition $i (\lambda (\alpha), \lambda (\beta)) = 0$ is equivalent to the condition $i (\alpha, \beta) = 0$ for all $\alpha, \beta \in \BB$.
It is shown in Irmak--Paris \cite[Lemma 2.2]{IrmPar1} that a super-injective simplicial map on $\TT(N)$ is always injective.
We will see that the same is true for the set $\BB = \TT_0(N)$ defined below (see Lemma \ref{lem3_5bis}).

Let $\alpha \in \TT(N)$.
We say that $\alpha$ \emph{bounds a Klein bottle} if  $N_\alpha$ has two connected components and one of them, $N'$, is a one-holed Klein bottle. 
Recall that $\TT (N')$ is a singleton $\{\alpha_0\}$ (see Lemma \ref{lem2_10}\,(2)).
The class $\alpha_0$ is then called the \emph{interior class} of $\alpha$.
The aim of this section is to prove the following theorem.
This together with Theorem 1.1 of Irmak--Paris \cite{IrmPar1} are the main ingredients for the proof of Theorem \ref{thm1_1}.

\begin{thm}\label{thm3_1}
Let $\GG$ be a finite index subgroup of $\MM(N)$ and let $\varphi: \GG \to \MM(N)$ be an injective homomorphism. 
There exists a super-injective simplicial map $\lambda : \TT(N) \to \TT(N)$ satisfying the following properties. 
\begin{itemize}
\item[(a)]
Let $\alpha \in \TT(N)$ which does not bound a Klein bottle. 
There exist $k,\ell \in \Z \setminus \{0\}$ such that $t_\alpha^k \in \GG$ and $\varphi(t_\alpha^k) = t_{\lambda(\alpha)}^\ell$.
\item[(b)]
Let $\alpha \in \TT(N)$ which bounds a Klein bottle and let $\alpha_0$ be its interior class.
The class $\lambda (\alpha)$ bounds a Klein bottle, $\lambda (\alpha_0)$ is the interior class of $\lambda (\alpha)$, and there exist $k, \ell\in \Z \setminus \{ 0 \}$ and $\ell_0 \in \Z$ such that $t_\alpha^k \in \GG$ and $\varphi( t_\alpha^k) = t_{\lambda (\alpha)}^\ell t_{\lambda(\alpha_0)}^{\ell_0}$.
\end{itemize}
\end{thm}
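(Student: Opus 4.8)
The plan is to build the map $\lambda$ by first associating to each $\alpha \in \TT'(N)$ a well-defined target class, using the algebraic characterisation of Dehn twists (up to roots and powers) announced in Proposition \ref{prop3_3}, and then to extend the assignment to the classes bounding a Klein bottle by a separate argument. The starting point is that for each $\alpha \in \TT(N)$ the twist $t_\alpha$ has infinite order, so some power $t_\alpha^{k}$ lies in the finite-index subgroup $\GG$; since $\GG$ has finite index, $\varphi(t_\alpha^{k})$ also has infinite order, hence is not periodic. The first main step is to show that $\varphi(t_\alpha^{k})$ is reducible — not pseudo-Anosov — by exhibiting a large abelian subgroup containing it: take a simplex $\AA$ of $\TT(N)$ of maximal cardinality $\rk(N)$ with $\alpha \in \AA$, pass to a common power so that $Z_\AA[m] \subset \GG$, and observe that $\varphi(Z_\AA[m])$ is a free abelian group of rank $\rk(N)$ by injectivity; by Proposition \ref{prop2_7} its essential reduction system is a simplex of $\TT(N)$ of size $\rk(N)$, so in particular $\varphi(t_\alpha^{k})$ has non-empty essential reduction system and $\SS(\varphi(t_\alpha^k))$ is a simplex of two-sided circles. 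The second main step is to pin this reduction system down to a single class: using Lemma \ref{lem2_4}, Lemma \ref{lem2_5} and the algebraic characterisation of Proposition \ref{prop3_3}, one shows that when $\alpha$ does not bound a Klein bottle, $\SS(\varphi(t_\alpha^k))$ is a singleton $\{\beta\}$ and in fact $\varphi(t_\alpha^k) = t_\beta^{\ell}$ (possibly after multiplying by a further bounded power); we then set $\lambda(\alpha) = \beta$. Consistency of $\lambda(\alpha)$ (independence of the choice of $k$) follows from Lemma \ref{lem2_4}(1).

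The next step is to verify that $\lambda$ is a super-injective simplicial map on the subset $\TT'(N)$. For $\alpha,\beta \in \TT'(N)$ with $i(\alpha,\beta)=0$ one can choose commuting powers $t_\alpha^a, t_\beta^b \in \GG$, so $\varphi(t_\alpha^a)$ and $\varphi(t_\beta^b)$ commute; since these are powers of $t_{\lambda(\alpha)}$ and $t_{\lambda(\beta)}$ respectively, commuting forces $i(\lambda(\alpha),\lambda(\beta))=0$ (two Dehn twists commute iff their defining curves are disjoint — this uses Proposition \ref{prop2_1} applied to $Z_{\{\lambda(\alpha),\lambda(\beta)\}}$, or the standard centraliser computation). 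Conversely, if $i(\alpha,\beta)\neq 0$ then $t_\alpha^a$ and $t_\beta^b$ generate a non-abelian group (a suitable power of a $2$-generator free-ish subgroup, via the standard ping-pong / Thurston argument on the subsurface filled by $\alpha\cup\beta$), hence so do their images, so $\lambda(\alpha)$ and $\lambda(\beta)$ cannot be disjoint; injectivity of $\lambda$ on $\TT'(N)$ is the special case $\alpha\neq\beta$, $i(\alpha,\beta)=0$ combined with the fact that distinct disjoint curves give distinct twists. This already gives property (a).

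For property (b) — the classes $\alpha$ bounding a one-holed Klein bottle — the argument is that $\alpha$ together with its interior class $\alpha_0$ (a one-sided or rather: $\alpha_0\in\TT(N')$ where $N'$ is the one-holed Klein bottle) generate, after taking powers, an abelian subgroup, and $\MM(N_{2,1}) \cong \Z\rtimes\Z/2\Z$ (Lemma \ref{lem2_10}(2)) forces a rigid local picture. One extends $\lambda$ to these $\alpha$ by picking an auxiliary $\alpha' \in \TT'(N)$ disjoint from $\alpha$ and "close" to it, uses that $\lambda(\alpha)$ is determined up to the constraint $i(\lambda(\alpha),\lambda(\alpha'))=0$ for all such $\alpha'$, and the super-injectivity already established on $\TT'(N)$ to see that these constraints cut out a unique class which must itself bound a Klein bottle; its interior class is then forced to be $\lambda(\alpha_0)$ by the same disjointness bookkeeping, and the image $\varphi(t_\alpha^k)$, being supported (after reduction along $\lambda(\alpha)$) on the one-holed Klein bottle side, must be $t_{\lambda(\alpha)}^\ell t_{\lambda(\alpha_0)}^{\ell_0}$ since those are the only mapping classes of $N_{2,1}$ of infinite order up to the $\Z/2\Z$. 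Finally, super-injectivity of the full map $\lambda:\TT(N)\to\TT(N)$ follows by combining the $\TT'(N)$ case with the local analysis at Klein-bottle curves, noting that disjointness of two curves can always be detected by disjointness from a common refining family in $\TT'(N)$.

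I expect the main obstacle to be the second step: upgrading "$\SS(\varphi(t_\alpha^k))$ is a nonempty simplex of two-sided curves" to "$\SS(\varphi(t_\alpha^k))$ is a single curve and $\varphi(t_\alpha^k)$ is a power of the corresponding twist." This is exactly where the algebraic characterisation (Proposition \ref{prop3_3}) does the work — one must characterise, purely group-theoretically inside $\MM(N)$ and in a way that survives restriction to a finite-index subgroup, which infinite-order elements are (roots of) powers of a single Dehn twist along a non-Klein-bottle-bounding curve; the subtlety special to the non-orientable setting is precisely the one-holed Klein bottle subsurfaces, whose $\Z\rtimes\Z/2\Z$ mapping class groups make $t_\alpha$ for $\alpha$ bounding a Klein bottle behave differently (its centraliser/abelian-neighbourhood structure is larger), which is why those classes are excised in part (a) and handled separately in part (b).
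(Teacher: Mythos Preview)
Your plan has a genuine gap: you treat all classes in $\TT'(N)$ (those not bounding a Klein bottle) uniformly, but the paper's Lemma \ref{lem3_2}\,(1) shows that there is a further exceptional family $\TT_1(N)$ --- separating classes whose two complementary pieces are both non-orientable of odd genus --- for which \emph{no} simplex of $\TT(N)$ of cardinality $\rk(N)$ contains $\alpha$. Your ``first main step'' (embed $\alpha$ in a maximal simplex $\AA$ with $|\AA|=\rk(N)$ and push $Z_\AA[m]$ through $\varphi$) therefore breaks down for every $\alpha\in\TT_1(N)$, and correspondingly Proposition \ref{prop3_3}\,(2) does not apply to such $\alpha$: there is no pair of rank-$\rk(N)$ abelian subgroups meeting in $\langle t_\alpha^k\rangle$, so you cannot invoke Proposition \ref{prop3_3}\,(1) on the image side either. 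The algebraic characterisation you rely on really only handles $\TT_0(N)=\TT(N)\setminus(\TT_1(N)\cup\TT_2(N))$.

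The paper deals with this by a different mechanism. It first builds $\mu:\TT_0(N)\to\TT_0(N)$ exactly as you describe (this is Proposition \ref{prop3_4}), then proves enough combinatorial rigidity of $\mu$ (Lemmas \ref{lem3_6}--\ref{lem3_12}) to realise $\mu$ on suitable configurations by an actual homeomorphism $H$. For $\gamma\in\TT_1(N)$ (Lemmas \ref{lem3_14}, \ref{lem3_15}) one then argues indirectly: $\varphi(t_\gamma^u)$ commutes with many $t_{\mu(\eta)}$, hence fixes a configuration of curves whose regular neighbourhood forces it to fix $[H(c)]=\gamma'$; reducing along a simplex whose complement has only $S_{0,3}$ and $N_{1,2}$ pieces then pins $\varphi(t_\gamma^{uv})$ down to a power of $t_{\gamma'}$. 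The Klein-bottle case (Lemma \ref{lem3_16}) is handled by the same template and produces the extra $t_{\lambda(\alpha_0)}^{\ell_0}$ factor. Your sketch for part (b) (``constraints cut out a unique class which must bound a Klein bottle'') is in the right spirit but underspecified; the paper's argument needs the concrete configurations and the reduction-homomorphism step, not just disjointness bookkeeping.
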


\begin{rem}
It will follow from Theorem \ref{thm1_1} that $\ell_0=0$ in Part (b) of the above theorem.
\end{rem}

We say that a class $\alpha \in \TT(N)$ is \emph{separating} (resp. \emph{non-separating}) if $N_\alpha$ has two connected components (resp. has one connected component). 
We denote by $\TT_1 (N)$ the set of separating classes $\alpha \in \TT (N)$ such that both components of $N_\alpha$ are non-orientable of odd genus.  
We denote by $\TT_2(N)$ the set of separating classes $\alpha \in \TT(N)$ that bound Klein bottles. 
And we denote by $\TT_0 (N)$ the complement of $\TT_1 (N) \cup \TT_2 (N)$ in $\TT (N)$.

The rest of the section is devoted to the proof of Theorem \ref{thm3_1}.
In Subsection \ref{subsec3_1} we prove the restriction of Theorem \ref{thm3_1} to $\TT_0 (N)$ (see Proposition \ref{prop3_4}).
In Subsection \ref{subsec3_2} we prove several combinatorial properties of super-injective simplicial maps $\TT_0 (N) \to \TT_0 (N)$. 
Theorem \ref{thm3_1} is proved in Subsection \ref{subsec3_3}. 


\subsection{Injective homomorphisms and super-injective simplicial maps of $\TT_0(N)$}\label{subsec3_1}

We define the \emph{rank} of an orientable surface $S_{\rho,n}$ to be $\rk(S_{\rho,n}) = 3\rho + n -3$.
It is well-known that $\rk(S_{\rho,n})$ is the maximal cardinality of a simplex of $\TT (S_{\rho,n})$ ($= \CC(S_{\rho,n})$).
By Corollary \ref{corl2_9} the same is true for non-orientable surfaces.

\begin{lem}\label{lem3_2}
Let $\alpha \in \TT(N)$.
\begin{itemize}
\item[(1)]
We have $\alpha \in \TT_1 (N)$ if and only if there is no simplex $\AA$ in $\TT (N)$ of cardinality $\rk(N)$ containing $\alpha$.
\item[(2)]
We have $\alpha \in \TT_2(N)$ if and only if there exists a simplex $\AA$ in $\TT(N)$ of cardinality $\rk(N)$ containing $\alpha$ and there exists a class $\beta \in \TT(N)$, different from $\alpha$, such that every simplex $\AA$ in $\TT (N)$ of cardinality $\rk (N)$ containing $\alpha$ also contains $\beta$.
\item[(3)]
We have $\alpha \in \TT_0 (N)$ if and only if there are two simplices $\AA$ and $\AA'$ in $\TT (N)$ of cardinality $\rk(N)$ such that $\AA \cap \AA' = \{ \alpha \}$.
\end{itemize}
\end{lem}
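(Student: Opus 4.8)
The plan is to prove the three statements by combining Lemma \ref{lem2_8} (and Corollary \ref{corl2_9}), which identifies the simplices of $\TT(N)$ of maximal cardinality $\rk(N)$ with the pants decompositions of $N$ having no one-sided curves, with a careful bookkeeping of how one-holed Klein bottles and odd-genus non-orientable pieces behave under cutting. First I would establish the key structural fact underlying all three parts: a simplex $\AA$ of $\TT(N)$ has cardinality $\rk(N)$ if and only if $\AA$ can be completed to a pants decomposition $\AA'$ of $N$ with $\AA' \setminus \AA$ consisting only of one-sided curves, and moreover (since $\rho \ge 5$) the number of one-sided curves forced in is controlled by the parities of the genera of the complementary pieces. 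Concretely, cutting $N_{\rho,n}$ into pants using only two-sided curves is impossible when $\rho$ is odd (one is always left with at least one non-orientable piece requiring a one-sided curve), and the combinatorics of Lemma \ref{lem2_8} shows exactly how a maximal two-sided simplex distributes the ``non-orientable genus'' among its pieces.

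For part (1): if $\alpha \in \TT_1(N)$, then $N_\alpha = N' \sqcup N''$ with both $N', N''$ non-orientable of odd genus. Any simplex of $\TT(N)$ containing $\alpha$ restricts to simplices of $\TT(N')$ and $\TT(N'')$; since each of $N', N''$ has odd non-orientable genus, by the structural fact above no simplex of $\TT(N')$ (resp. $\TT(N'')$) reaches the maximal cardinality $\rk(N')$ (resp. $\rk(N'')$) — there is always a ``deficit'' of at least one forced one-sided curve — and adding these deficits across the two pieces plus the curve $\alpha$ itself shows $|\AA| \le \rk(N) - 1$. Conversely, if $\alpha \notin \TT_1(N)$ I would exhibit a maximal simplex through $\alpha$ by case analysis on whether $\alpha$ is non-separating, separates off an orientable piece, or separates into two non-orientable pieces not both of odd genus; in each case one completes $\{\alpha\}$ greedily to a pants decomposition, pushing all the parity ``defect'' into a single complementary piece so that only one-sided curves are added, giving $|\AA| = \rk(N)$.

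For part (2): if $\alpha \in \TT_2(N)$, then $N_\alpha = N' \sqcup N''$ with $N'$ a one-holed Klein bottle $N_{2,1}$; by Lemma \ref{lem2_10}(2), $\TT(N')$ is the singleton $\{\alpha_0\}$, so $\beta := \alpha_0$ is forced into every pants decomposition of $N$ through $\alpha$ that uses only two-sided curves on the $N'$ side — and such a simplex of cardinality $\rk(N)$ exists exactly because $N'' = N_{\rho - 2, n+1}$ has the right parity to admit a maximal two-sided simplex (as $\rho$ and $\rho-2$ have the same parity, and $\alpha \notin \TT_1(N)$ forces the non-$N_{2,1}$ side to carry no odd defect of its own). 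Conversely, if $\alpha$ lies in some maximal simplex and some $\beta \ne \alpha$ lies in all of them, I would argue that the only curve that can be ``rigid'' in this sense is the interior class of a one-holed Klein bottle cut off by $\alpha$: if $N_\alpha$ has no $N_{2,1}$ component, one can always find two maximal simplices through $\alpha$ avoiding any prescribed $\beta$, by rerouting the pants curves on whichever side $\beta$ lives in (here $\rho \ge 5$ gives enough room). Part (3) then follows formally: $\alpha \in \TT_0(N)$ iff $\alpha \notin \TT_1(N)$ and $\alpha \notin \TT_2(N)$, iff (by parts (1) and (2)) $\alpha$ lies in a maximal simplex and no $\beta \ne \alpha$ is common to all maximal simplices through $\alpha$, and this last condition I would show is equivalent to the existence of two maximal simplices $\AA, \AA'$ with $\AA \cap \AA' = \{\alpha\}$ — one direction is trivial, and the other is an iterated application of the ``two maximal simplices avoiding $\beta$'' construction, peeling off the common curves one at a time.

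The main obstacle I expect is the careful verification, in the converse directions of (1) and especially (2), that one genuinely has enough flexibility to build maximal simplices avoiding a prescribed curve $\beta$; this requires a clean understanding of which pants decompositions of a fixed non-orientable surface piece are ``maximal two-sided'' and how one can slide the single one-sided curve around, and it is here that the hypothesis $\rho \ge 5$ (and a separate small-genus check for the pieces $N_{3,k}$, $N_{4,k}$ that arise) is used. Organizing this as a lemma about non-orientable surface pieces — ``any two maximal two-sided simplices are connected by elementary moves, and any generic two-sided curve not forced as an interior Klein-bottle class can be avoided'' — and then feeding it into the three parts is the cleanest route.
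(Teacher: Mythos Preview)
Your plan for parts (1) and (2) is essentially the same as the paper's, though the paper organizes it through a single clean identity rather than through the language of ``deficits'' and ``forced one-sided curves''. Namely, the paper simply records that for separating $\alpha$ with pieces $N_1,N_2$ one has $\rk(N)=\rk(N_1)+\rk(N_2)+2$ when $\alpha\in\TT_1(N)$ and $\rk(N)=\rk(N_1)+\rk(N_2)+1$ otherwise (and $\rk(N)=\rk(N_\alpha)+1$ for non-separating $\alpha$). With this formula, part (1) is a one-line estimate and the forward direction of (2) is immediate from Lemma \ref{lem2_10}(2). One small slip in your write-up: for $\alpha\in\TT_1(N)$ it is not that simplices of $\TT(N_i)$ fail to reach $\rk(N_i)$ --- they do --- but rather that $\rk(N_1)+\rk(N_2)+1$ falls short of $\rk(N)$ by one.

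Where your proposal genuinely diverges from the paper, and where there is a gap, is part (3). You try to deduce (3) formally from (1) and (2): from ``no single $\beta\neq\alpha$ lies in every maximal simplex through $\alpha$'' you want to extract two maximal simplices meeting only in $\alpha$, by ``peeling off the common curves one at a time''. But this inference is not valid as stated: being able to avoid each $\beta$ individually does not give a single maximal simplex avoiding a prescribed \emph{finite set} of curves, and replacing $\AA'$ by a simplex avoiding one common curve may reintroduce another. The auxiliary lemma you propose (connectedness by elementary moves plus single-curve avoidance) still does not close this gap without further argument. The paper sidesteps the issue entirely: it proves (3) directly, not via (1) and (2), by observing that each piece $N_i$ of $N_\alpha$ already contains two \emph{disjoint} simplices of $\TT(N_i)$ of cardinality $\rk(N_i)$, and then taking their unions with $\{\alpha\}$. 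This is both the missing ingredient in your peeling argument and a much shorter route; proving the existence of two disjoint maximal simplices in a fixed surface piece is an easy explicit construction, and once you have it there is nothing to iterate.
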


\begin{proof}
Suppose that $\alpha$ is non-separating.  
It is easily seen that $\rk(N) = \rk(N_\alpha) +1$.
Take two disjoint simplices $\AA_1$ and $\AA_1'$ in $\TT(N_\alpha)$ of cardinality $\rk(N_\alpha)$ and set $\AA= \AA_1 \cup \{\alpha\}$ and $\AA' = \AA_1' \cup \{\alpha\}$. 
Then $|\AA| = |\AA'| = \rk(N)$ and $\AA \cap \AA' = \{\alpha\}$.

Suppose that $\alpha$ is separating.  
Let $N_1$ and $N_2$ be the connected components of $N_\alpha$.
It is easily seen that $\rk(N) = \rk(N_1) + \rk(N_2) +2$ if $\alpha \in \TT_1(N)$ and $\rk(N) = \rk(N_1) + \rk(N_2) +1$ otherwise. 

Suppose that $\alpha \in \TT_1(N)$. 
Let $\AA$ be a simplex of $\TT(N)$ containing $\alpha$. 
Set $\AA_1 = \AA \cap \TT(N_1)$ and $\AA_2 = \AA \cap \TT(N_2)$.
We have $\AA = \AA_1 \sqcup \AA_2 \sqcup \{\alpha\}$, hence  
\[
|\AA| = |\AA_1| + |\AA_2| +1 \le \rk(N_1) + \rk(N_2) + 1 = \rk(N)-1 < \rk(N)\,.
\]

Suppose that $\alpha \in \TT_2 (N)$.
Then one of the connected components of $N_\alpha$, say $N_1$, is a one-holed Klein bottle. 
By Lemma \ref{lem2_10}\,(2), $\TT(N_1)$ contains a unique element, $\beta$. 
Let $\AA_2$ be a simplex of $\TT(N_2)$ of cardinality $\rk(N_2)$.
Set $\AA = \AA_2 \cup \{\alpha, \beta \}$. 
Then $\AA$ contains $\alpha$ and $|\AA| = \rk(N)$.
Let $\AA$ be a simplex of $\TT(N)$ of cardinality $\rk (N)$ containing $\alpha$.
Set $\AA_1 = \AA \cap \TT(N_1)$ and $\AA_2 = \AA \cap \TT(N_2)$.
Since $\AA = \AA_1 \sqcup \AA_2 \sqcup \{\alpha\}$ and $|\AA_2| \le \rk(N_2) = \rk(N)-2$, we have $\AA_1 \neq \emptyset$, hence $\AA_1 = \{ \beta\}$, and therefore $\beta \in \AA$.

Suppose that $\alpha \in \TT_0(N)$.
It is easily seen that there exist two disjoint simplices $\AA_1$ and $\AA_1'$ in $\TT(N_1)$ of cardinality $\rk(N_1)$.
Similarly, one can find two disjoint simplices $\AA_2$ and $\AA_2'$ in $\TT(N_2)$ of cardinality $\rk (N_2)$. 
Set $\AA = \AA_1 \cup \AA_2 \cup \{ \alpha \}$ and $\AA' = \AA_1' \cup \AA_2' \cup \{ \alpha \}$.
Then $\AA$ and $\AA'$ are two simplices of $\TT(N)$ of cardinality $\rk(N)$ such that $\AA \cap \AA' = \{ \alpha \}$. 
\end{proof}

For $\alpha \in \TT (N)$ we set $R(\alpha) = \{ f\in \MM(N) \mid \text{there exists } \ell \in \Z \text{ such that } f^\ell \in \langle t_\alpha \rangle \setminus \{1 \}\}$.
Note that, by Lemma \ref{lem2_4} and Lemma \ref{lem2_5}, we have $\SS(f)=\{\alpha\}$ for all $f \in R(\alpha)$.
We turn now to prove an algebraic characterization of the Dehn twists along the elements of $\TT_0 (N)$, up to roots and powers.  
This result and its proof are independent from the characterizations given in Atalan \cite{Atala1,Atala2} and Atalan--Szepietowski \cite{AtaSze1}, and they are interesting by themselves.

\begin{prop}\label{prop3_3}
\begin{itemize}
\item[(1)]
Let $f$ be an element of infinite order in $\MM(N)$.
If there exist two abelian subgroups $H$ and $H'$ in $\MM(N)$ of rank $\rk(N)$ such that $H \cap H' = \langle f \rangle$, then there exists $\alpha \in \TT_0 (N)$ such that $f \in R(\alpha)$.
\item[(2)]
Let $\GG$ be a finite index subgroup of $\MM(N)$, let $\alpha \in \TT_0 (N)$, and let $k \in \Z \setminus \{ 0\}$ such that $t_\alpha^k \in \GG$.
Then there exist two abelian subgroups $H$ and $H'$ in $\GG$ of rank $\rk(N)$ such that $H \cap H' = \langle t_\alpha^k \rangle$.
\end{itemize}
\end{prop}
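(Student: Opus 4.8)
The plan is to prove the two parts separately: part (2) by an explicit construction, and part (1) by analysing the canonical reduction system of $f$ inside a maximal abelian overgroup.

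\emph{Part (2).} Given $\alpha\in\TT_0(N)$ and $k$ with $t_\alpha^k\in\GG$, I would use Lemma \ref{lem3_2}(3) to fix simplices $\AA,\AA'$ of $\TT(N)$ with $|\AA|=|\AA'|=\rk(N)$ and $\AA\cap\AA'=\{\alpha\}$, and choose $K\ge 1$ (possible since $[\MM(N):\GG]<\infty$) with $t_\beta^K\in\GG$ for all $\beta\in\AA\cup\AA'$. Set $H=\langle t_\alpha^k\rangle\cdot\langle t_\beta^K\mid\beta\in\AA\setminus\{\alpha\}\rangle$ and $H'=\langle t_\alpha^k\rangle\cdot\langle t_\beta^K\mid\beta\in\AA'\setminus\{\alpha\}\rangle$. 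By Proposition \ref{prop2_1} these are finite-index subgroups of the free abelian groups $Z_\AA$, $Z_{\AA'}$, hence free abelian of rank $\rk(N)$, and they lie in $\GG$. Since $H\cap H'\subseteq Z_\AA\cap Z_{\AA'}$ and, by Lemma \ref{lem2_5}, a nontrivial element of $Z_\AA$ (resp. of $Z_{\AA'}$) has essential reduction system a nonempty subset of $\AA$ (resp. of $\AA'$), every nontrivial element of $H\cap H'$ has essential reduction system contained in $\AA\cap\AA'=\{\alpha\}$ and so is a power of $t_\alpha$; thus $Z_\AA\cap Z_{\AA'}=\langle t_\alpha\rangle$. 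Since $t_\alpha^k\in H\cap H'$ while, by freeness of $Z_\AA$, any power of $t_\alpha$ lying in $H$ is a power of $t_\alpha^k$, we conclude $H\cap H'=\langle t_\alpha^k\rangle$.

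\emph{Part (1).} First, $f$ is not periodic (it has infinite order) and not pseudo-Anosov (its centralizer would be virtually cyclic by Lemma \ref{lem2_11}, while it contains the abelian group $H$ of rank $\rk(N)\ge 4$); so $f$ is reducible and $\SS(f)\neq\emptyset$. As $H,H'$ are abelian and contain $f$, Lemma \ref{lem2_4}(2) shows $\SS(f)$ is a reduction system for both, so Proposition \ref{prop2_7} provides simplices $\AA\supseteq\SS(H)\cap\TT(N)$, $\AA'\supseteq\SS(H')\cap\TT(N)$ of cardinality $\rk(N)$, integers $k_0,k_0'\ge 1$ with $Z_{\SS(H)}[k_0]\subseteq H$, $Z_{\SS(H')}[k_0']\subseteq H'$, and the absence of $N_{2,1}$-components in $N_{\SS(H)}$ and $N_{\SS(H')}$. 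For every two-sided $\gamma\in\SS(H)\cap\SS(H')$ we obtain $t_\gamma^{k_0 k_0'}\in H\cap H'=\langle f\rangle$; since twists along distinct two-sided classes of a simplex generate a free abelian group (Proposition \ref{prop2_1}) while $\langle f\rangle\cong\Z$, there is at most one such $\gamma$, so $\SS(f)$ has at most one two-sided class.

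The heart of the proof is to show that $\SS(f)$ is exactly one two-sided class $\alpha$ and that, for $n_0\neq 0$ with $f^{n_0}$ fixing every component and class of $\SS(f)$, no restriction $\Lambda_i(f^{n_0})$ to a component of $N_{\SS(f)}$ is pseudo-Anosov. The key topological input is that cutting a surface along a one-sided curve never disconnects it. Hence, if $\SS(f)$ had no two-sided class, $N_{\SS(f)}$ would be a single component $M_1$ with $\Ker\Lambda_{\SS(f)}=Z_{\SS(f)}$ trivial, so $H$ would embed into $\MM(M_1)$ as an abelian subgroup; if $\Lambda_{M_1}(f^{n_0})$ were pseudo-Anosov its centralizer is virtually cyclic and $\rk(H)\le 1$, and if it were periodic then a power of $f$ would lie in the trivial group $Z_{\SS(f)}$ — in either case a contradiction. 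So $\SS(f)$ has a unique two-sided class $\alpha$, and since the one-sided classes of $\SS(f)$ do not disconnect $N_\alpha$, every component of $N_{\SS(f)}$ meets $\alpha$. Now if some $\Lambda_{M_1}(f^{n_0})$ were pseudo-Anosov, then from $\alpha\in\SS(H)\cap\SS(H')\cap\TT(N)$ we get $t_\alpha^{k_0 k_0'}=f^{j}$ with $j\neq 0$; applying $\Lambda_{\SS(f)}$, under which $t_\alpha$ is trivial (it lies in $Z_{\SS(f)}=\Ker\Lambda_{\SS(f)}$), gives $\Lambda_{M_1}(f^{j})=\id$, whereas $\Lambda_{M_1}(f^{n_0 j})=\Lambda_{M_1}(f^{n_0})^{j}$ is pseudo-Anosov — a contradiction. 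Therefore each $\Lambda_i(f^{n_0})$ is periodic, $\Lambda_{\SS(f)}(f^{n_0})$ has finite order $d$, and $f^{n_0 d}\in\Ker\Lambda_{\SS(f)}=Z_{\SS(f)}=\langle t_\alpha\rangle$; writing $f^{n_0 d}=t_\alpha^{m}$ we have $m\neq 0$ (else $f$ would be periodic), so $f\in R(\alpha)$ and, by Lemmas \ref{lem2_4}(1) and \ref{lem2_5}, $\SS(f)=\{\alpha\}$.

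It remains to verify $\alpha\in\TT_0(N)$. Since $\alpha\in\SS(H)\cap\TT(N)\subseteq\AA$ with $|\AA|=\rk(N)$, Lemma \ref{lem3_2}(1) gives $\alpha\notin\TT_1(N)$. If $\alpha\in\TT_2(N)$, write $N_\alpha=N'\sqcup M$ with $N'\cong N_{2,1}$ and interior class $\beta$; as $N_{\SS(H)}$ has no $N_{2,1}$-component, $\SS(H)$ must cut $N'$, and a rank count — using $\rk(N)=\rk(N')+\rk(M)+1=\rk(M)+2$, the facts that every one-sided class of $N'$ meets $\beta$ and that $\TT(N_{1,2})=\emptyset$, and Corollary \ref{corl2_9} applied to $M$ — shows $|\AA|=\rk(N)$ can hold only if $\beta\in\SS(H)$; likewise $\beta\in\SS(H')$. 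Then $t_\beta^{k_0 k_0'}\in H\cap H'=\langle f\rangle$, so a nonzero power of $t_\beta$ and the power $t_\alpha^{m}=f^{n_0 d}$ both lie in $\langle f\rangle\cong\Z$, contradicting that $\langle t_\alpha,t_\beta\rangle$ is free abelian of rank $2$ (Proposition \ref{prop2_1}). Hence $\alpha\in\TT_0(N)$. The step I expect to be the main obstacle is the claim, made above, that reducing $f$ along $\SS(f)$ leaves only periodic pieces: this is exactly where the maximality $\rk(H)=\rk(N)$, the virtual cyclicity of centralizers of pseudo-Anosov maps, and the smallness $\rk(H\cap H')=1$ have to be combined, and where the one-sided reduction classes — which, unlike in the orientable case, never disconnect and carry no Dehn twist — must be handled carefully.
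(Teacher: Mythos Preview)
Your proof is correct, and the overall architecture matches the paper's: use Lemma~\ref{lem3_2} for Part~(2), and for Part~(1) locate a two-sided class $\alpha\in\SS(f)$, show a power of $f$ equals a nontrivial power of $t_\alpha$, and then exclude $\TT_1$ and $\TT_2$.

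There is, however, a detour in your Part~(1) that you can excise. You derive $t_\alpha^{k_0k_0'}=f^{j}$ with $j\neq 0$ in the middle of your ``no pseudo-Anosov pieces'' argument; but this relation \emph{is already} the statement $f\in R(\alpha)$. The paper proceeds exactly this way: once $\alpha\in\SS(f)\cap\TT(N)$ is found, Proposition~\ref{prop2_7}(2) gives $t_\alpha^{k_0}\in H$ and $t_\alpha^{k_0'}\in H'$, hence $t_\alpha^{k_0k_0'}\in H\cap H'=\langle f\rangle$, and one is done. Your subsequent analysis---showing each $\Lambda_i(f^{n_0})$ is periodic and then obtaining $f^{n_0d}\in\langle t_\alpha\rangle$---reproves what you already had. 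Similarly, your preliminary bound ``$\SS(f)$ has at most one two-sided class'' is not needed once $f\in R(\alpha)$ is known, since then $\SS(f)=\SS(t_\alpha^{m})=\{\alpha\}$ by Lemmas~\ref{lem2_4} and~\ref{lem2_5}. In short, the paper's route is the same as yours with the middle third removed.

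For the $\TT_2$ exclusion your treatment is actually a bit more careful than the paper's. The paper writes ``if $\beta\notin\SS(H)$ then $N'$ would be a component of $N_{\SS(H)}$,'' silently passing over the possibility that $\SS(H)$ cuts $N'$ by one-sided classes; you address this explicitly via the observation that every one-sided class of $N_{2,1}$ meets $\beta$. Note though that your rank count, as phrased, uses not just the statement of Proposition~\ref{prop2_7}(1) but the specific $\AA=\SS_\TT\sqcup\BB$ built in its proof: only then does $\beta\notin\SS(H)$ force $\beta\notin\AA$ (since $\beta$ is cut by the one-sided class and hence lies in no component of $N_{\SS(H)}$), giving $|\AA|\le 1+\rk(M)<\rk(N)$. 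You should make this dependence explicit.
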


\begin{proof}
We take an element $f \in \MM(N)$ of infinite order such that there exist two abelian subgroups $H$ and $H'$ of rank $\rk (N)$ satisfying $H \cap H' = \langle f \rangle$. 
First we prove that $\SS(H) \cap \TT (N) \neq \emptyset$.
Suppose that $\SS(H) \cap \TT (N) = \emptyset$.
This means that all the elements of $\SS(H)$ are one-sided. 
Let $\Lambda_{\SS(H)} : \MM_{\SS(H)}(N) \to \MM(N_{\SS(H)})$ be the reduction homomorphism along $\SS(H)$.
Then $N_{\SS(H)}$ is connected and, by Proposition \ref{prop2_1}, $\Lambda_{\SS(H)}$ is injective.
On the other hand the set $\SS(H)$ is an adequate reduction system for all $h \in H$, hence $\Lambda_{\SS(H)}(h)$ is either periodic or pseudo-Anosov for all $h \in H$.  
By Lemma \ref{lem2_11} it follows that $\rk (\Lambda_{\SS(H)}(H)) = \rk(H) \le 1$.
This contradicts the hypothesis $\rho\ge 5$ which implies $\rk(N) \ge 2$.

Now we show that $\SS(f) \cap \TT(N) \neq \emptyset$.
Suppose that $\SS(f) \cap \TT (N) = \emptyset$.
Then all the elements of $\SS(f)$ are one-sided, $N_{\SS(f)}$ is connected, and the reduction homomorphism $\Lambda_{\SS(f)} : \MM_{\SS(f)} (N) \to \MM(N_{\SS(f)})$ is injective. 
Since $\SS(f)$ is an adequate reduction system for $f$, the element $\Lambda_{\SS(f)}(f)$ is either periodic or pseudo-Anosov. 
We know by the above that $\SS(H) \cap \TT (N)$ is non-empty, hence we can choose a class $\alpha \in \SS(H) \cap \TT (N)$.
We have $\alpha \not\in \SS(f)$, since it is two-sided, hence $\alpha$ lies in $\TT (N_{\SS(f)})$ and it is a reduction class for $\Lambda_{\SS(f)} (f)$.
So, $\Lambda_{\SS(f)}(f)$ is periodic, hence $f$ is of finite order: contradiction.

Now we show that there exists $\alpha \in \TT(N)$ such that $f \in R (\alpha)$. 
By the above, we can choose a class $\alpha \in \SS(f) \cap \TT (N)$.
By Proposition \ref{prop2_7} there exists $k \ge 1$ such that $Z_{\SS(H)}[k] \subset H$, hence $t_\alpha^k \in H$.
Similarly there exists $k' \ge 1$ such that $t_\alpha^{k'} \in H'$.
So, $t_\alpha^{kk'} \in H \cap H'$.
Since $H \cap H' = \langle f \rangle$, it follows that there exists $\ell \in \Z\setminus \{0\}$ such that $f^\ell = t_\alpha^{kk'}$.

Since $\alpha \in \SS(H)$, by Proposition \ref{prop2_7} there exists a simplex $\AA \subset \TT(N)$ of cardinality $\rk (N)$ containing $\alpha$.
By Lemma \ref{lem3_2} it follows that $\alpha \not\in \TT_1(N)$.
Suppose that $\alpha \in \TT_2 (N)$.
Then $N_\alpha$ has two connected components, one of which, $N'$, is a one-holed Klein bottle. 
By Lemma \ref{lem2_10}, $\TT (N')$ is a singleton $\{ \beta \}$. 
If $\beta$ was not an element of $\SS (H)$, then $N'$ would be a connected component of $N_{\SS (H)}$ and this would contradict Proposition \ref{prop2_7}\,(3).
Hence $\beta \in \SS(H)$.
Moreover, again by Proposition \ref{prop2_7}, there exists $p \ge 1$ such that $t_\beta^p \in H$.
Similarly, there exists $p' \ge 1$ such that $t_\beta^{p'} \in H'$.
So, $t_\beta^{pp'} \in H \cap H'$.
On the other hand, we know that there exist $\ell \ge 1$ and $u \in \Z \setminus \{0\}$ such that $f^\ell = t_\alpha^u \in H \cap H'$.
Then the elements $t_\alpha^u, t_\beta^{pp'}$ generate a free abelian group of rank $2$ lying in $H \cap H' \simeq \Z$: contradiction.
So, $\alpha \in \TT_0 (N)$.

Now we consider a finite index subgroup $\GG$ of $\MM(N)$, a class $\alpha \in \TT_0 (N)$, and an integer $k \in \Z \setminus \{ 0 \}$ such that $t_\alpha^k \in \GG$. 
By Lemma \ref{lem3_2} there exist two simplices $\AA=\{\alpha_1, \dots, \alpha_r\}$ and $\AA' = \{\alpha_1', \dots, \alpha_r'\}$ in $\TT (N)$ of cardinality $r=\rk (N)$ such that $\AA \cap \AA' = \{\alpha\}$.
We assume that $\alpha = \alpha_1 = \alpha_1'$.
Since $\GG$ is of finite index in $\MM(N)$, there exists $\ell\ge 1$ such that $t_{\alpha_i}^\ell, t_{\alpha_i'}^\ell \in \GG$ for all $i \in \{2, \dots, r\}$.
Let $H$ (resp. $H'$) be the subgroup of $\GG$ generated by $t_\alpha^k, t_{\alpha_2}^\ell, \dots, t_{\alpha_r}^\ell$ (resp. $t_\alpha^k, t_{\alpha_2'}^\ell, \dots, t_{\alpha_r'}^\ell$).
Then $H$ and $H'$ are free abelian subgroups of $\GG$ of rank $r=\rk(N)$.
It remains to show that $H \cap H'=\langle t_\alpha^k \rangle$.

Let $g \in H \cap H'$.
Since $g \in H$, there exist $u_1,u_2, \dots, u_r \in \Z$ such that $g=t_\alpha^{ku_1} t_{\alpha_2}^{\ell u_2} \cdots t_{\alpha_r}^{\ell u_r}$.
Similarly, there exist $v_1,v_2, \dots, v_r \in \Z$ such that $g=t_\alpha^{kv_1} t_{\alpha_2'}^{\ell v_2} \cdots t_{\alpha_r'}^{\ell v_r}$.
By Lemma \ref{lem2_5} we have $\SS(g)= \{ \alpha_i \mid u_i \neq 0\} = \{\alpha_i' \mid v_i \neq 0\}$.
Since $\AA \cap \AA' = \{\alpha\}$, we have $u_i=v_i=0$ for all $i \in \{2, \dots, r\}$, hence $g=t_\alpha^{ku_1} \in \langle t_\alpha^k \rangle$.
\end{proof}

\begin{prop}\label{prop3_4}
Let $\GG$ be a finite index subgroup of $\MM(N)$ and let $\varphi: \GG \to \MM(N)$ be an injective homomorphism. 
Then there exits a super-injective simplicial map $\lambda : \TT_0(N) \to \TT_0(N)$ satisfying the following. 
Let $\alpha \in \TT_0(N)$.  
There exist $k,\ell \in \Z \setminus \{0\}$ such that $t_\alpha^k \in \GG$ and $\varphi(t_\alpha^k) = t_{\lambda(\alpha)}^\ell$.
\end{prop}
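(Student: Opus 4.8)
The plan is to build $\lambda$ via the algebraic characterization of Dehn twists in $\TT_0(N)$ supplied by Proposition \ref{prop3_3}. Given $\alpha \in \TT_0(N)$, first choose $k \in \Z \setminus \{0\}$ with $t_\alpha^k \in \GG$; such $k$ exists because $\GG$ has finite index. By Proposition \ref{prop3_3}\,(2), there are rank-$\rk(N)$ abelian subgroups $H, H' \subset \GG$ with $H \cap H' = \langle t_\alpha^k \rangle$. Applying the injective homomorphism $\varphi$, we get abelian subgroups $\varphi(H), \varphi(H') \subset \MM(N)$; since $\varphi$ is injective these still have rank $\rk(N)$, and $\varphi(H) \cap \varphi(H') = \varphi(H \cap H') = \langle \varphi(t_\alpha^k) \rangle$. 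Moreover $\varphi(t_\alpha^k)$ has infinite order since $t_\alpha^k$ does and $\varphi$ is injective. Thus Proposition \ref{prop3_3}\,(1) applies and yields a class $\beta \in \TT_0(N)$ with $\varphi(t_\alpha^k) \in R(\beta)$, i.e.\ $\varphi(t_\alpha^k)^\ell = t_\beta^{m}$ for some $\ell \in \Z \setminus \{0\}$ and $m \in \Z \setminus \{0\}$. I would like to set $\lambda(\alpha) = \beta$.

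The first thing to check is that $\lambda$ is well defined, i.e.\ that $\beta$ does not depend on the choice of $k$. If $t_\alpha^k, t_\alpha^{k'} \in \GG$ produce classes $\beta, \beta'$ via the above recipe, then $\varphi(t_\alpha^k)$ and $\varphi(t_\alpha^{k'})$ are both non-trivial powers of the same infinite-order element $\varphi(t_\alpha^{kk'})$ (up to passing to a common power), hence lie in both $R(\beta)$ and $R(\beta')$; since $\SS(g) = \{\gamma\}$ for every $g \in R(\gamma)$ (by Lemma \ref{lem2_4} and Lemma \ref{lem2_5}), comparing essential reduction systems forces $\beta = \beta'$. The same argument also gives the asserted relation: for \emph{any} valid $k$, $\varphi(t_\alpha^k)$ lies in $R(\lambda(\alpha))$, and one can then choose the exponent $\ell$ in the statement so that $\varphi(t_\alpha^k) = t_{\lambda(\alpha)}^\ell$ directly — here I would argue that $\varphi(t_\alpha^k)$ is itself a power of $t_{\lambda(\alpha)}$ (not merely a root of one): since $\varphi(t_\alpha^k)$ commutes with a rank-$\rk(N)$ abelian group and lies in $R(\lambda(\alpha))$, its reduction along $\{\lambda(\alpha)\}$ must be periodic on each piece, which combined with the structure of $R(\lambda(\alpha))$ pins it down as $t_{\lambda(\alpha)}^\ell$; alternatively one absorbs any root into a redefinition of $k$ using that $\MM(N)$ has no nontrivial roots of Dehn twists beyond the known index-$2$ phenomenon in one-holed Klein bottles, which is excluded here since $\lambda(\alpha) \in \TT_0(N)$.

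It remains to show $\lambda$ is a super-injective simplicial map, i.e.\ $i(\alpha, \gamma) = 0 \iff i(\lambda(\alpha), \lambda(\gamma)) = 0$ for $\alpha, \gamma \in \TT_0(N)$. The key observation is that $i(\alpha, \gamma) = 0$ is equivalent to $t_\alpha^k$ and $t_\gamma^{k'}$ commuting in $\MM(N)$ for suitable powers (disjoint circles give commuting twists; non-disjoint two-sided circles give twists with no common nontrivial power commuting, by Proposition \ref{prop2_1} applied to the stabilizer of a putative common reduction system, or directly since a power of $t_\gamma$ would then fix $\alpha$). Granting this translation, $i(\alpha,\gamma)=0$ implies $t_\alpha^k$ and $t_\gamma^{k'}$ commute (after enlarging $k, k'$ so both lie in $\GG$), hence $\varphi(t_\alpha^k) = t_{\lambda(\alpha)}^\ell$ and $\varphi(t_\gamma^{k'}) = t_{\lambda(\gamma)}^{\ell'}$ commute, forcing $i(\lambda(\alpha), \lambda(\gamma)) = 0$; and the converse runs the same way using that $\varphi$ is injective (so commuting images pull back to commuting elements). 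The main obstacle I anticipate is the well-definedness and the precise exponent bookkeeping in the previous paragraph — ensuring the image is genuinely a power of $t_{\lambda(\alpha)}$ rather than merely a root, which is where the hypothesis $\alpha \in \TT_0(N)$ (as opposed to a Klein-bottle-bounding class) is essential and is precisely why this proposition only handles $\TT_0(N)$; the commutation criterion itself is routine once the reduction-system machinery of Section \ref{Sec2} is in hand.
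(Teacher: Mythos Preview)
Your approach is essentially the paper's: use Proposition \ref{prop3_3}\,(2) on the source side, push through $\varphi$, then use Proposition \ref{prop3_3}\,(1) on the target side to produce $\beta$, and finish by translating disjointness into commutation of twist powers. The super-injectivity paragraph is fine and matches the paper (which cites Stukow \cite{Stuko1} for the commutation criterion).

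Where you overcomplicate things is the ``power versus root'' step. The paper simply observes that $\varphi(t_\alpha^u) \in R(\beta)$ means there exist $v,\ell \in \Z\setminus\{0\}$ with $\varphi(t_\alpha^u)^v = t_\beta^\ell$, and then \emph{sets} $k = uv$, so $\varphi(t_\alpha^k) = t_\beta^\ell$ directly. Your ``alternatively one absorbs any root into a redefinition of $k$'' is exactly this, and it needs no appeal to any classification of roots of Dehn twists; that justification is unnecessary and the specific claim about roots is at best unclear. Your first argument---that commuting with a rank-$\rk(N)$ abelian group and having periodic reduction along $\{\beta\}$ forces $\varphi(t_\alpha^k)$ itself to be a power of $t_\beta$---does not work as stated: periodic reduction only gives that some power of the element lies in $Z_{\{\beta\}}$, not that the element does. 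Drop that detour and simply redefine $k$.

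The well-definedness discussion is also more than you need. The proposition asserts the \emph{existence} of a map $\lambda$ with the stated property, so making a choice of $u$ (hence of $\beta$) for each $\alpha$ is enough; your uniqueness argument via $\SS(\cdot)$ is correct but optional.
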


\begin{proof}
Let $\alpha \in \TT_0 (N)$.
Choose $u \in \Z\setminus \{0\}$ such that $t_\alpha^u \in \GG$.
By Proposition \ref{prop3_3} there exist two abelian subgroups $H,H'$ of $\GG$ of rank $\rk(N)$ such that $H \cap H' = \langle t_\alpha^u \rangle$.
The subgroups $\varphi(H)$ and $\varphi(H')$ are abelian groups of rank $\rk(N)$ and $\varphi(H) \cap \varphi(H') = \langle \varphi(t_\alpha^u) \rangle$, hence, by Proposition \ref{prop3_3}, there exists $\beta \in \TT_0 (N)$ such that $\varphi(t_\alpha^u) \in R(\beta)$.
There exist $v,\ell \in \Z \setminus \{0\}$ such that $\varphi(t_\alpha^{uv}) = \varphi(t_\alpha^u)^v = t_\beta^\ell$.
Then we set $k=uv$ and $\lambda(\alpha) = \beta$.

Let $\alpha, \alpha' \in \TT_0 (N)$ and let $k, k' \in \Z \setminus \{ 0 \}$.
By Stukow \cite{Stuko1} we have that, if $t_{\alpha}^{k} = t_{\alpha'}^{k'}$, then $\alpha = \alpha'$, and we have $t_{\alpha}^{k} t_{\alpha'}^{k'} = t_{\alpha'}^{k'} t_{\alpha}^{k}$ if and only if $i(\alpha, \alpha')=0$.
We assume that $k,k'$ are such that $\varphi( t_\alpha^k) = t_{\lambda (\alpha)}^\ell$ and $\varphi(t_{\alpha'}^{k'}) = t_{\lambda (\alpha')}^{\ell'}$ for some $\ell, \ell' \in \Z \setminus \{ 0 \}$.
Then 
\[
i (\lambda (\alpha), \lambda (\alpha')) = 0 \ \Leftrightarrow\
t_{\lambda (\alpha)}^\ell t_{\lambda (\alpha')}^{\ell'} = t_{\lambda (\alpha')}^{\ell'} t_{\lambda (\alpha)}^\ell \ \Leftrightarrow\
t_\alpha^k t_{\alpha'}^{k'} = t_{\alpha'}^{k'} t_\alpha^k\ \Leftrightarrow\
i(\alpha, \alpha')=0\,.
\]
So, $\lambda$ is a super-injective simplicial map.
\end{proof}


\subsection{Super-injective simplicial maps of $\TT_0(N)$}\label{subsec3_2}

In this subsection $\lambda : \TT_0 (N) \to \TT_0 (N)$ denotes a given super-injective simplicial map. 
Most of the results proved here are proved in Irmak--Paris \cite{IrmPar1} for super-injective simplicial maps of $\TT(N)$.
However, we use in Irmak--Paris \cite{IrmPar1} configurations of circles containing elements of $\TT_2(N)$ or $\TT_1 (N)$, and, on the other hand, here we use results such as Lemma \ref{lem3_6} that are false for super-injective simplicial maps of $\TT (N)$.
So, it is not easy to deduce the results of this subsection from Irmak--Paris \cite{IrmPar1} without losing the reader, hence we give independent proofs from Irmak--Paris \cite{IrmPar1} up to an exception (Lemma \ref{lem3_10}). 

The following lemma is easy to show and its proof is left to the reader.
It will be often used afterwards, hence it is important to keep it in mind.

\begin{lem}\label{lem3_5}
Let $\AA$ be a simplex of $\TT (N)$ of cardinality $\rk (N)$.
If $\rho$ is even, then all the connected components of $N_\AA$ are homeomorphic to $S_{0,3}$.
If $\rho$ is odd, then one of the connected components of $N_\AA$ is homeomorphic to $N_{1,2}$ and the other components are homeomorphic to $S_{0,3}$.
\end{lem}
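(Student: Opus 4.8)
The plan is to prove Lemma \ref{lem3_5} by an Euler characteristic count combined with the classification of surfaces with negative Euler characteristic that are too small to be cut further. Let $\AA$ be a simplex of $\TT(N)$ with $|\AA| = \rk(N)$. Since $\AA$ consists of two-sided circles, we have $\AA_\TT = \AA$ and $\AA_\OO = \emptyset$, so Lemma \ref{lem2_8} gives $|\AA| \le \frac{3}{2}\rho + n - 3$; as noted in the proof of Corollary \ref{corl2_9}, equality $|\AA| = \rk(N)$ forces $\AA$ to be a pants decomposition when $\rho$ is even, and when $\rho$ is odd it forces $|\AA| = \frac{1}{2}(3\rho-1) + n - 3$, one less than the pants-decomposition bound. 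So the strategy is to read off the component types of $N_\AA$ directly from how close $|\AA|$ is to saturating the bound.

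First I would handle the even case. Here $\AA$ is a pants decomposition, so by definition every connected component of $N_\AA$ is a pair of pants. A pair of pants is either $S_{0,3}$ or $N_{1,2}$ or $N_{2,1}$ (the three surfaces with $\chi = -1$), but $N_{1,2}$ and $N_{2,1}$ are non-orientable, and cutting an orientable surface along two-sided circles can only produce orientable pieces — wait, $N$ is non-orientable, so this is not automatic. Instead I would argue via a parity/genus bookkeeping: the non-orientable genus is additive in an appropriate sense under cutting along two-sided curves, and since $\rho$ is even, a careful count shows no component can carry odd crosscap number in a way that leaves a genuinely non-orientable pair of pants; more cleanly, one checks that if some component were $N_{1,2}$ or $N_{2,1}$ then the total crosscap count of the components would have the wrong parity relative to $\rho$. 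Actually the cleanest route: cutting along a two-sided separating curve splits $\rho$ as $\rho_1 + \rho_2$ with the genera adding, and cutting along a two-sided non-separating curve drops $\rho$ by $2$; iterating, all the "crosscaps" stay on one side, so at most components can be non-orientable, and a pair of pants that is non-orientable is $N_{1,2}$ (genus $1$, so odd) or $N_{2,1}$ (genus $2$). I would then observe that if $\rho$ is even, the single non-orientable component (if any) must have even genus, ruling out $N_{1,2}$ and leaving only $N_{2,1}$ as a potential spoiler — and $N_{2,1}$ is ruled out by Lemma \ref{lem2_10}(2) together with the fact that a pants decomposition cannot refine $N_{2,1}$ past its one interior curve, contradicting maximality of $|\AA|$. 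Hence all components are $S_{0,3}$.

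For the odd case, $\AA$ is not quite a pants decomposition: $|\AA|$ is exactly one less than the value forcing a pants decomposition. Extending $\AA$ to a pants decomposition $\AA'$ adds exactly one curve, and by the inequality in Lemma \ref{lem2_8} (tracking $|\AA_\TT|$ versus $|\AA_\OO|$) that extra curve must be one-sided; cutting along it turns the unique component $N'$ of $N_\AA$ that is not yet a pair of pants into pieces that are pairs of pants. A component cut along a single one-sided curve into pants must be $N_{1,2}$ itself (cutting $N_{1,2}$ along its unique one-sided curve yields a disk-with-handle... rather, cutting $N_{1,2}$ along its one-sided core gives $S_{0,3}$ — this is the standard fact that $N_{1,2}$ becomes a pair of pants after such a cut). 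So $N' = N_{1,2}$, and all other components of $N_\AA$ are already pairs of pants; by the same genus/parity argument as before, with the single crosscap-bearing piece now being $N_{1,2}$, all of those are $S_{0,3}$.

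The main obstacle I anticipate is the bookkeeping that rules out $N_{2,1}$ and pins down which component is non-orientable — in the non-orientable setting one must be careful that "pair of pants" includes $N_{1,2}$ and $N_{2,1}$, and that cutting a non-orientable surface along two-sided curves behaves additively on genus in the right way. The key inputs making this go through cleanly are Lemma \ref{lem2_8} (the sharp inequality with its equality case), the rank formulas' parity behavior exploited in Corollary \ref{corl2_9}, and Lemma \ref{lem2_10}(2) which forbids $N_{2,1}$ from appearing in a maximal simplex of two-sided curves. Once the count shows at most one non-orientable component with genus of the forced parity, everything falls into place. I would keep the proof short, essentially citing the equality case of Lemma \ref{lem2_8} and the Euler-characteristic/genus arithmetic, exactly in the spirit in which the paper says "its proof is left to the reader."
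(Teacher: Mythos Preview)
The paper leaves this proof to the reader, so there is no original argument to compare against. Your overall plan---invoke the equality case of Lemma~\ref{lem2_8} when $\rho$ is even, and for $\rho$ odd extend $\AA$ to a pants decomposition by a single extra curve and identify the exceptional piece---is exactly right and is surely what the authors intend. Your odd case is essentially correct: from $|\AA'_\TT|+\tfrac12|\AA'_\OO|=\tfrac32\rho+n-3$ together with $\AA \subset \AA'_\TT$ and Corollary~\ref{corl2_9} one gets $\AA'_\TT=\AA$ and $|\AA'_\OO|=1$; the added one-sided curve is non-separating, lies in a single component $N'$ of $N_\AA$, and cutting $N'$ along it gives a connected component of $N_{\AA'}$, hence $S_{0,3}$; thus $N'$ has $\chi=-1$, two boundary components, and is non-orientable, so $N'\cong N_{1,2}$. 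The remaining components of $N_\AA$ are already components of $N_{\AA'}$ and hence $S_{0,3}$---no parity argument is needed there.

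Where your write-up goes astray is the even case, and the source is terminological. In this paper a \emph{pair of pants} is $S_{0,3}$: the count $2|\AA'_\TT|+|\AA'_\OO|+n=3p$ in the proof of Lemma~\ref{lem2_8} presupposes that every component has three boundary circles, which already rules out $N_{1,2}$ and $N_{2,1}$. So once the equality case of Lemma~\ref{lem2_8} says $\AA$ is a pants decomposition, you are done. The ``genus/parity bookkeeping'' you sketch to exclude non-orientable pieces is both unnecessary and incorrect: non-orientable genus is not additive under separating two-sided cuts (one side may be orientable), a non-separating two-sided cut can make the result orientable rather than merely lowering $\rho$ by $2$, and it is false that at most one component of $N_\AA$ can be non-orientable (a single separating two-sided curve can already produce two non-orientable pieces). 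Your fallback via Lemma~\ref{lem2_10}\,(2) does legitimately exclude an $N_{2,1}$ component (its interior two-sided class would enlarge $\AA$ past $\rk(N)$, contradicting Corollary~\ref{corl2_9}), but the step excluding $N_{1,2}$ rests on the false ``single non-orientable component'' claim. Drop that paragraph; the even case is a one-liner from Lemma~\ref{lem2_8}.
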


\begin{lem}\label{lem3_5bis}
The map $\lambda : \TT_0 (N) \to \TT_0 (N)$ is injective.
\end{lem}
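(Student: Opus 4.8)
The plan is to show that a super-injective simplicial map $\lambda : \TT_0(N) \to \TT_0(N)$ cannot identify two distinct classes, by exploiting the characterization of the various pieces of $\TT(N)$ in terms of maximal simplices (Lemma \ref{lem3_2}) together with the combinatorics of codimension-one faces of such simplices. Suppose $\alpha, \beta \in \TT_0(N)$ are distinct with $\lambda(\alpha) = \lambda(\beta)$. Since $i(\alpha,\beta) = i(\beta,\alpha)$ and $\lambda$ sends both to the same class, we have $i(\lambda(\alpha),\lambda(\beta)) = 0$ trivially, and by super-injectivity this forces $i(\alpha,\beta) = 0$; so $\alpha$ and $\beta$ span an edge of $\TT_0(N)$. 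The idea is then to produce two classes $\gamma, \delta \in \TT_0(N)$ that are ``separated'' by $\alpha$ and $\beta$ in the sense that $i(\gamma,\alpha) = 0$, $i(\gamma,\beta) \neq 0$ while $i(\delta,\alpha) \neq 0$, $i(\delta,\beta) = 0$, and use these to derive a contradiction with super-injectivity once $\alpha$ and $\beta$ have the same image.

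First I would reduce to a local model: embed $\{\alpha,\beta\}$ in a subsurface. Since $i(\alpha,\beta)=0$, choose disjoint representatives $a, b$; the classes $\alpha$ and $\beta$ live in a (possibly disconnected) configuration, and because both lie in $\TT_0(N)$ — hence each is contained in a maximal simplex of $\TT(N)$ of cardinality $\rk(N)$ — I can complete $\{\alpha, \beta\}$ to simplices of maximal cardinality. The key is to find, near $\alpha$ but ``on the far side'' of $\beta$, a curve $\gamma \in \TT_0(N)$ with $i(\gamma,\beta)\neq 0$ but $i(\gamma,\alpha)=0$; this is where one uses that $N$ has genus $\rho \ge 5$, so there is enough room. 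Symmetrically one gets $\delta$. Then $i(\gamma,\delta)$ can be arranged to be $0$ (place $\gamma$ and $\delta$ in disjoint subsurfaces cut off by $\alpha\cup\beta$), so $i(\lambda(\gamma),\lambda(\delta))=0$ too. Now apply super-injectivity: $i(\lambda(\gamma),\lambda(\alpha)) = 0$ since $i(\gamma,\alpha)=0$, while $i(\lambda(\gamma),\lambda(\beta)) \neq 0$ since $i(\gamma,\beta)\neq 0$. But $\lambda(\alpha) = \lambda(\beta)$ makes these two statements contradict one another: a class cannot both intersect and be disjoint from a fixed class. This contradiction shows $\lambda$ is injective.

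Alternatively, and perhaps more cleanly given the tools already set up, I would argue via Lemma \ref{lem3_2}(3): each $\alpha \in \TT_0(N)$ is characterized by the existence of two maximal simplices $\AA, \AA'$ of $\TT(N)$ with $\AA \cap \AA' = \{\alpha\}$. A super-injective simplicial map sends a maximal simplex of $\TT_0(N)$ — equivalently, the $\TT_0$-part of a maximal simplex of $\TT(N)$ — to a set of pairwise-disjoint classes of the same cardinality (injectivity on that finite set being automatic from super-injectivity restricted there, exactly as in \cite[Lemma 2.2]{IrmPar1}), hence to a simplex of $\TT(N)$ of full relevant size. One then checks that $\lambda$ restricted to each such maximal simplex is injective, and that two classes $\alpha \neq \beta$ with $\lambda(\alpha)=\lambda(\beta)$ would have to lie together in some maximal simplex — which is impossible once we know $\lambda$ is injective on maximal simplices. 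The main obstacle in either route is the geometric bookkeeping: verifying that the auxiliary curves $\gamma, \delta$ can genuinely be found in $\TT_0(N)$ (not accidentally in $\TT_1(N)$ or $\TT_2(N)$, where the maximal-simplex combinatorics degenerates) and that the subsurfaces cut off by $\alpha \cup \beta$ are large enough for genus $\rho\ge 5$. This case analysis — separating versus non-separating $\alpha$, and the sub-cases of how $\beta$ sits relative to $\alpha$ — is routine but must be carried out carefully, and it is the part where one leans on Lemma \ref{lem3_5} describing the components of $N_\AA$.
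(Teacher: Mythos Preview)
Your first approach is correct and is essentially the paper's proof: the paper also reduces to the case $i(\alpha,\beta)=0$ and then produces a single class $\gamma \in \TT_0(N)$ with $i(\alpha,\gamma)\neq 0$ and $i(\beta,\gamma)=0$, whence $\lambda(\alpha)\neq\lambda(\beta)$ by super-injectivity. The second curve $\delta$, the disjointness $i(\gamma,\delta)=0$, and the alternative route via Lemma~\ref{lem3_2}(3) are all unnecessary, and the paper treats the existence of $\gamma$ as routine rather than as a case analysis requiring Lemma~\ref{lem3_5}.
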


\begin{proof}
Let $\alpha, \beta$ be two distinct elements of $\TT_0 (N)$.
If $i(\alpha, \beta) \neq 0$, then $i (\lambda (\alpha), \lambda (\beta)) \neq 0$, hence $\lambda (\alpha) \neq \lambda (\beta)$.
Suppose that $i (\alpha, \beta) = 0$.
It is easily shown that there exists $\gamma \in \TT_0 (N)$ such that $i (\alpha, \gamma) \neq 0$ and $i (\beta, \gamma) =0$.
Then $i (\lambda (\alpha), \lambda (\gamma)) \neq 0$ and $i (\lambda (\beta), \lambda (\gamma)) =0$, hence $\lambda (\alpha) \neq \lambda (\beta)$.
\end{proof}

\begin{lem}\label{lem3_6}
Let $\AA=\{ \alpha_1, \dots, \alpha_r\}$ be a simplex of $\TT_0 (N)$ of cardinality $r=\rk(N)$.
There exists $\beta \in \TT_0 (N)$ such that $i(\alpha_1,\beta) \neq 0$ and $i(\alpha_i, \beta)=0$ for all $i \in \{2, \dots, r\}$.
\end{lem}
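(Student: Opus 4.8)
The statement is about a simplex $\AA=\{\alpha_1,\dots,\alpha_r\}$ in $\TT_0(N)$ of the maximal cardinality $r=\rk(N)$, and asks for a class $\beta\in\TT_0(N)$ meeting $\alpha_1$ but disjoint from $\alpha_2,\dots,\alpha_r$. The plan is to produce such a $\beta$ directly on the cut surface $N_\AA$, using the description of $N_\AA$ given by Lemma \ref{lem3_5}: since $|\AA|=\rk(N)$, all components of $N_\AA$ are copies of $S_{0,3}$ (if $\rho$ even) with in addition one copy of $N_{1,2}$ (if $\rho$ odd). First I would locate $\alpha_1$: cutting along $\alpha_1$ replaces the two sides of (a representative of) $\alpha_1$ by two boundary circles lying on component(s) of $N_\AA$. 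I will build $\beta$ as a circle in $N$ that, when pushed to $N_\AA$, is a suitable arc (or union of arcs) joining these two copies of $\alpha_1$, closing up to a two-sided generic circle in $N$ that crosses $\alpha_1$ but is disjoint from every other $\alpha_i$.

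The main case analysis is according to whether $\alpha_1$ is non-separating or separating in $N$, and, in the separating case, what the two sides look like. If $\alpha_1$ is non-separating, the two boundary circles created by cutting along $\alpha_1$ either lie on a single component $P$ of $N_\AA$ or on two distinct components $P,P'$; in either situation I would take an embedded arc $\delta$ in that component (or one arc in each of $P$ and $P'$) with endpoints on the copies of $\alpha_1$, chosen to be essential and disjoint from $\partial$ otherwise, and let $\beta$ be the circle obtained by gluing back. Because each component is an $S_{0,3}$ or an $N_{1,2}$, there is enough room to choose $\delta$ so that the resulting $\beta$ is generic, two-sided, and — crucially — lies in $\TT_0(N)$ rather than in $\TT_1(N)\cup\TT_2(N)$; here I would invoke the characterisations in Lemma \ref{lem3_2} (equivalently, check directly that $\beta$ does not bound a Klein bottle and, if separating, does not split $N$ into two odd non-orientable pieces). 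One has freedom in the choice of arc and of which boundary component of the pair of pants to route it along, and I expect that freedom to be exactly what is needed to land in $\TT_0(N)$; when $\rho$ is odd I would route through or around the unique $N_{1,2}$ component as needed to control the genus parities. By construction $\beta$ is disjoint from all of $\alpha_2,\dots,\alpha_r$ (it lives inside $N_\AA$ away from those curves) and $i(\alpha_1,\beta)\neq 0$ since $\delta$ joins the two sides of $\alpha_1$.

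The step I expect to be the genuine obstacle is verifying $\beta\in\TT_0(N)$, i.e.\ ruling out the two forbidden types, uniformly across the finitely many shapes that the $\alpha_1$-neighbourhood in $N_\AA$ can take. The hypothesis $\rho\ge 5$ (so $\rk(N)\ge 2$, and $N$ is large enough that cutting along a maximal simplex still leaves the other pieces non-degenerate) is what guarantees enough slack: there are always at least two "free" pairs of pants adjacent to $\alpha_1$'s copies, or a pair of pants together with the $N_{1,2}$, so that $\beta$ can be chosen to avoid bounding a Klein bottle and to avoid the $\TT_1$ parity obstruction. I would organise the verification by a short enumeration of the local pictures (two sides on the same $S_{0,3}$; on two different $S_{0,3}$'s; one side or both sides on the $N_{1,2}$) and in each picture exhibit the arc explicitly, then appeal to Lemma \ref{lem3_2}(3) to certify membership in $\TT_0(N)$. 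I would keep the drawings and the routine genus/orientability bookkeeping to a minimum, as in the statement's own "left to the reader" style for analogous constructions earlier in the section.
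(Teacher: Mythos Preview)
Your plan is the paper's: cut along $\AA$, locate the two copies of $\alpha_1$ in $N_\AA$, and build $\beta$ from arcs in the component(s) carrying those copies; the case split (same component versus two different components) and the use of Lemma~\ref{lem3_5} match exactly.

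Where you go astray is in identifying the real obstruction. You attribute the ``slack'' to $\rho\ge 5$ and expect ``freedom in the choice of arc'' to steer $\beta$ into $\TT_0(N)$. In fact the decisive hypothesis is that $\AA\subset\TT_0(N)$ rather than merely $\TT(N)$, and the lemma is \emph{false} if one allows $\AA\subset\TT(N)$. The place this enters is precisely your first case, both copies $a_1',a_1''$ on the same pair of pants $P'$. Gluing back, $P=\pi_\AA(P')$ is either $S_{1,1}$ or $N_{2,1}$. If $P=N_{2,1}$ then $\TT(P)=\{\alpha_1\}$ by Lemma~\ref{lem2_10}\,(2), so \emph{no} arc choice yields a two-sided generic $\beta$ at all; there is no freedom to exploit. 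What actually rules out $P=N_{2,1}$ is that the third boundary of $P'$ is a copy of some $a_j$ with $j\ge 2$ (it cannot be a component of $\partial N$, else $P=N$), and $\alpha_j\in\TT_0(N)$ forbids $\alpha_j$ from bounding a one-holed Klein bottle. Once $P=S_{1,1}$ is established, the dual curve $\beta$ is non-separating, hence lies in $\TT_0(N)$ automatically---no further bookkeeping needed.

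For the case $P'\neq P''$ your direct parity check would work, but the paper's verification that $\beta\notin\TT_1(N)$ is slicker than genus bookkeeping: simply observe that $\{\beta,\alpha_2,\dots,\alpha_r\}$ is again a simplex of cardinality $\rk(N)$ in $\TT(N)$ and invoke Lemma~\ref{lem3_2}\,(1). The check $\beta\notin\TT_2(N)$ is immediate from Lemma~\ref{lem3_5}, since at most one of the free boundaries of $P'\cup P''$ bounds a M\"obius band.
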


\begin{proof}
We choose pairwise disjoint representatives $a_i \in \alpha_i$, $i \in \{1, \dots, r\}$.
We denote by $N_\AA$ the natural compactification of $N \setminus ( \cup_{i=1}^r a_i)$ and by $\pi_\AA: N_\AA \to N$ the gluing map.  
We denote by $a_1'$ and $a_1''$ the two components of $\pi_\AA^{-1}(a_1)$, by $P'$ the connected component of $N_\AA$ containing $a_1'$, and by $P''$ the connected component containing $a_1''$.

Suppose that $P' = P''$.
Set $P=\pi_\AA(P')=\pi_\AA(P'')$.
Then $P$ is homeomorphic to $S_{1,1}$ or $N_{2,1}$ and there exists $i \in \{2, \dots, r\}$ such that $a_i = \partial P$ (say $a_2 = \partial P$).
Since $\alpha_2 \in \TT_0 (N)$, $P$ cannot be homeomorphic to $N_{2,1}$, hence $P$ is homeomorphic to $S_{1,1}$.
Let $\beta \in \TT (N)$ be the class represented by the circle $b$ drawn in Figure \ref{fig3_1}\,(i).
Then $\beta \in \TT_0 (N)$,  $i(\alpha_1, \beta)=1$, and $i(\alpha_i, \beta)=0$ for all $i \in \{2, \dots, r\}$.

\begin{figure}[ht!]
\begin{center}
\begin{tabular}{cc}
\parbox[c]{2.8cm}{\includegraphics[width=2.4cm]{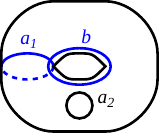}}
&
\parbox[c]{3.6cm}{\includegraphics[width=3.2cm]{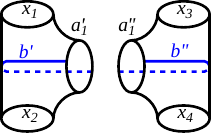}}
\\
(i) & (ii)
\end{tabular}

\caption{Circles in $N$ (Lemma \ref{lem3_6})}\label{fig3_1}
\end{center}
\end{figure}

Suppose that $P' \neq P''$.
By Lemma \ref{lem3_5}, either $P'$ and $P''$ are both homeomorphic to $S_{0,3}$, or one is homeomorphic to $S_{0,3}$ and the other is homeomorphic to $N_{1,2}$. 
The surfaces $P'$ and $P''$ are drawn in Figure \ref{fig3_1}\,(ii). 
In this figure each $x_i$ either is a boundary component of $N_\AA$,  or bounds a M\"obius band, and there is at most one $x_i$ bounding a M\"obius band. 
Consider the arcs $b'$ and $b''$ drawn in Figure \ref{fig3_1}\,(ii) and set $b=\pi_\AA (b' \cup b'')$ and $\beta = [b]$.
Then $i(\alpha_1, \beta)=2$ and $i(\alpha_i, \beta)=0$ for all $i \in \{2, \dots, r\}$.
Moreover, we have $\beta \not \in \TT_2 (N)$, since there is at most one $x_i$ bounding a M\"obius band, and we have $\beta \not \in \TT_1 (N)$, since $\{ \beta, \alpha_2, \dots, \alpha_r\}$ is a simplex in $\TT (N)$ of cardinality $r = \rk (N)$ (see Lemma \ref{lem3_2}), hence $\beta \in \TT_0 (N)$.
\end{proof}

Let $\AA=\{\alpha_1, \dots, \alpha_r\}$ be a simplex of $\TT_0 (N)$.
As before we denote by $\pi_\AA : N_\AA \to N$ the gluing map. 
We say that $\alpha_1$ and $\alpha_2$ are \emph{adjacent} with respect to $\AA$ if there exists a connected component $P$ of $N_\AA$ and two boundary components $a_1$ and $a_2$ of $P$ such that $\alpha_1 = [ \pi_\AA (a_1)]$ and $\alpha_2 = [ \pi_\AA (a_2)]$.

\begin{lem}\label{lem3_7}
Let $\AA= \{ \alpha_1, \dots, \alpha_r \}$ be a simplex of $\TT_0 (N)$ of cardinality $r=\rk(N)$.
If $\alpha_1$ and $\alpha_2$ are adjacent with respect to $\AA$ then there exists $\beta \in \TT_0 (N)$ such that $i(\alpha_1, \beta) \neq 0$, $i(\alpha_2, \beta) \neq 0$, and $i(\alpha_i, \beta)=0$ for all $i \in \{3, \dots, r\}$.
\end{lem}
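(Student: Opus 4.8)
The statement is purely combinatorial and, like Lemma~\ref{lem3_6}, should be proved by an explicit surgery near the component of $N_\AA$ that witnesses the adjacency. So I would fix a connected component $P$ of $N_\AA$ together with two boundary circles $a_1,a_2$ of $P$ with $\pi_\AA(a_1)\in\alpha_1$ and $\pi_\AA(a_2)\in\alpha_2$, and choose pairwise disjoint representatives $a_i\in\alpha_i$ as usual. Since $|\AA|=\rk(N)$, Lemma~\ref{lem3_5} guarantees that $P$, and every other component of $N_\AA$, is homeomorphic to $S_{0,3}$ or to $N_{1,2}$. Writing $a_1'$ (resp.\ $a_2'$) for the second preimage under $\pi_\AA$ of the circle $\pi_\AA(a_1)$ (resp.\ $\pi_\AA(a_2)$), I would split into three cases: (a) one of $\alpha_1,\alpha_2$, say $\alpha_1$, is \emph{local to $P$} in the sense that $a_1'$ is again a boundary circle of $P$ (this forces $P\cong S_{0,3}$, and then $\alpha_2$ cannot also be local); (b) $a_1'$ and $a_2'$ lie in the same component $Q\ne P$ of $N_\AA$; (c) $a_1'$ and $a_2'$ lie in two distinct components $Q_1,Q_2$ of $N_\AA$, both different from $P$.

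Case (b) is the simplest: take an essential arc $e$ in $P$ joining $a_1$ to $a_2$ and an essential arc $f$ in $Q$ joining $a_2'$ to $a_1'$, with endpoints matching along $\pi_\AA(a_1)$ and $\pi_\AA(a_2)$; then $b:=\pi_\AA(e\cup f)$ is a circle with $i(\alpha_1,[b])=i(\alpha_2,[b])=1$ and $[b]$ disjoint from $\alpha_3,\dots,\alpha_r$. In case (c) one uses two disjoint essential arcs in $P$ joining $a_1$ to $a_2$ (these exist in $S_{0,3}$ and in $N_{1,2}$) together with one essential arc from $a_i'$ to $a_i'$ in each $Q_i$, matched so that the four arcs close up to a single circle $b$ with $i(\alpha_1,[b])=i(\alpha_2,[b])=2$ and $[b]$ disjoint from the rest. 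In case (a) one argues as in the $P'=P''$ case of Lemma~\ref{lem3_6}: $\pi_\AA(P\cup a_1)$ is a subsurface of $N$ whose only boundary circle lies in $\alpha_2$, so---$\alpha_2\in\TT_0(N)$ being assumed---it is not a one-holed Klein bottle, hence $\pi_\AA(P\cup a_1)\cong S_{1,1}$; inside it $\alpha_1$ is a non-separating circle, and one takes an arc $\gamma$ in $S_{1,1}$ from its boundary to itself crossing $\alpha_1$ exactly once, completed by an essential arc from $a_2'$ to $a_2'$ in the component containing $a_2'$, producing $b$ with $i(\alpha_1,[b])=1$, $i(\alpha_2,[b])=2$ and $[b]$ disjoint from the rest. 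In all cases set $\beta=[b]$; then $i(\alpha_1,\beta)\ne0$, $i(\alpha_2,\beta)\ne0$ and $i(\alpha_i,\beta)=0$ for $i\in\{3,\dots,r\}$ by construction.

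It remains to check $\beta\in\TT_0(N)$, and this is where the construction must be carried out with care. Taking all arcs essential makes $b$ generic; arranging that $b$ meets each crosscap of the local model an even number of times keeps $\beta$ two-sided (by Lemma~\ref{lem3_5} at most one crosscap occurs); and the hypothesis $\AA\subset\TT_0(N)$ is what excludes the one-holed-Klein-bottle possibilities for the glued pieces such as $\pi_\AA(P\cup a_1)$. To rule out $\beta\in\TT_1(N)$ I would complete $\{\beta,\alpha_3,\dots,\alpha_r\}$ to a simplex of $\TT(N)$ of cardinality $\rk(N)$---the subsurface of $N$ carrying $b$ still has room to be cut---and invoke Lemma~\ref{lem3_2}(1); to rule out $\beta\in\TT_2(N)$ I would argue as at the end of the proof of Lemma~\ref{lem3_6}, using again that at most one crosscap appears so that $\beta$ does not bound a one-holed Klein bottle. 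The main obstacle is precisely this verification together with the bookkeeping of cases (a) and (c), where the closing arcs traverse one or two neighboring copies of $S_{0,3}$ or $N_{1,2}$ and must be chosen sharply enough that $\beta$ stays in $\TT_0(N)$; drawing the relevant figures, in the spirit of Figure~\ref{fig3_1}, is the cleanest way to settle these points.
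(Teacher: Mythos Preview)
Your overall strategy and case decomposition match the paper's: the paper also fixes the component $P_0$ witnessing adjacency, lets $P_1,P_2$ be the components containing the other preimages of $a_1,a_2$, and treats the cases $P_0=P_1$, $P_1=P_2$, and $P_0,P_1,P_2$ pairwise distinct, which are exactly your (a), (b), (c). Your treatment of (a) and (c) is essentially the paper's.

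There is, however, a genuine gap in your case (b). When $P$ and $Q$ are glued along both $a_1$ and $a_2$, the resulting subsurface $\pi_\AA(P\cup Q)$ may be non-orientable even though both $P$ and $Q$ are pairs of pants $S_{0,3}$: the two gluings can be incompatible, yielding $N_{2,2}$ rather than $S_{1,2}$. In that situation any simple closed curve meeting each of $a_1$ and $a_2$ exactly once is one-sided (traversing it picks up exactly one orientation flip, at whichever gluing is incompatible), so your $b$ with $i(\alpha_1,[b])=i(\alpha_2,[b])=1$ is not in $\TT(N)$ at all. Your two-sidedness test ``$b$ meets each crosscap of the local model an even number of times'' is precisely what fails here: there are no crosscaps in the pieces, yet $b$ is one-sided because of the gluing. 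The paper splits your case (b) into two subcases according to whether $\pi_\AA(P\cup Q)$ is orientable or not; in the orientable subcase it uses exactly your curve, while in the non-orientable subcase it produces a different curve inside a model $K\cong N_{2,2}$ with $i(\alpha_1,\beta)=2$ and $i(\alpha_2,\beta)=4$. You need to add this subcase.
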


\begin{proof}
We choose pairwise disjoint representatives $a_i \in \alpha_i$, $i \in \{1, \dots, r\}$.
We denote by $N_\AA$ the natural compactification of $N \setminus ( \cup_{i=1}^r a_i)$ and by $\pi_\AA: N_\AA \to N$ the gluing map.  
We denote by $a_i'$ and $a_i''$ the two components of $\pi_\AA^{-1}(a_i)$ for all $i$. 
We can assume that $a_1'$ and $a_2'$ are boundary components of the same connected component $P_0$ of $N_\AA$.
On the other hand, we denote by $P_1$ (resp. $P_2$) the connected component of $N_\AA$ containing $a_1''$ (resp. $a_2''$).

Suppose first that $P_0 = P_1$.
Set $P= \pi_\AA (P_0)$. 
Then $P$ is homeomorphic to $S_{1,1}$ or $N_{2,1}$, and $a_2$ is the boundary component of $P$.
But $\alpha_2 \in \TT_0 (N)$, hence $P$ cannot be homeomorphic to $N_{2,1}$, thus $P$ is homeomorphic to $S_{1,1}$.
There exists a subsurface $K$ of $N$ homeomorphic to $S_{1,2}$ endowed with the configuration of circles drawn in Figure \ref{fig3_2}\,(i).
Each boundary component of $K$ either is isotopic to an element of $\{a_3, \dots, a_r \}$, or is a boundary component of $N$, or bounds a M\"obius band. 
Moreover, we have $K \cap a_i = \emptyset$ for all $i \in \{3, \dots, r\}$.
Let $b$ be the circle drawn in Figure \ref{fig3_2}\,(i) and let $\beta = [b]$.
Then $\beta \in \TT_0 (N)$, $i(\alpha_1, \beta)=1$, $i (\alpha_2,\beta)=2$, and $i (\alpha_i,\beta)=0$ for all $i \in \{3, \dots, r\}$.

\begin{figure}[ht!]
\begin{center}
\begin{tabular}{ccc}
\parbox[c]{4cm}{\includegraphics[width=3.6cm]{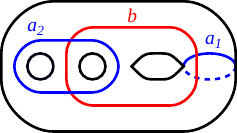}}
&
\parbox[c]{3cm}{\includegraphics[width=2.4cm]{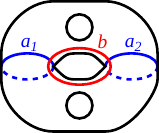}}
&
\parbox[c]{3.2cm}{\includegraphics[width=2.8cm]{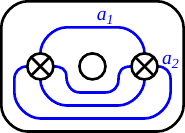}}
\\
(i) & (ii) & (iii)
\end{tabular}

\bigskip
\begin{tabular}{cc}
\parbox[c]{3.2cm}{\includegraphics[width=2.8cm]{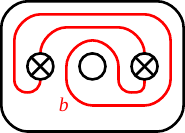}}
&
\parbox[c]{4.4cm}{\includegraphics[width=4cm]{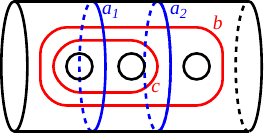}}
\\
(iv) & (v)
\end{tabular}

\caption{Circles in $N$ (Lemma \ref{lem3_7})}\label{fig3_2}
\end{center}
\end{figure}

Suppose that $P_1 = P_2$ and $P = \pi_\AA (P_0 \cup P_1)$ is orientable. 
There exists a subsurface $K$ of $N$ homeomorphic to $S_{1,2}$ endowed with the configuration of circles drawn in Figure \ref{fig3_2}\,(ii).
Each boundary component of $K$ either is isotopic to an element of $\{a_3, \dots, a_r \}$, or is a boundary component of $N$, or bounds a M\"obius band.
Moreover, we have $K \cap a_i = \emptyset$ for all $i \in \{3, \dots, r\}$.
Let $b$ be the circle drawn in Figure \ref{fig3_2}\,(ii) and let $\beta = [b]$.
Then $\beta \in \TT_0 (N)$, $i(\alpha_1, \beta)= i (\alpha_2,\beta)=1$, and $i (\alpha_i,\beta)=0$ for all $i \in \{3, \dots, r\}$.

Suppose that $P_1 = P_2$ and $P = \pi_\AA (P_0 \cup P_1)$ is non-orientable.
There exists a subsurface $K$ of $N$ homeomorphic to $N_{2,2}$ endowed with the configuration of circles drawn in Figure \ref{fig3_2}\,(iii).
In this figure and in all the others a disk with a cross inside represents a crosscap. 
This means that the disk containing the cross is removed and any two antipodal points in the resulting new boundary component are identified. 
In particular the circle represented by half of the boundary of the disk is one-sided. 
Each boundary component of $K$ either is isotopic to an element of $\{a_3, \dots, a_r \}$, or is a boundary component of $N$, or bounds a M\"obius band. 
Moreover, we have $K \cap a_i = \emptyset$ for all $i \in \{3, \dots, r\}$.
Let $b$ be the circle drawn in Figure \ref{fig3_2}\,(iv) and let $\beta = [b]$.
Then $\beta \in \TT_0 (N)$, $i(\alpha_1, \beta)=2$, $i (\alpha_2,\beta)=4$, and $i (\alpha_i,\beta)=0$ for all $i \in \{3, \dots, r\}$.

Suppose that $P_0 \neq P_1 \neq P_2 \neq P_0$.
There exists a subsurface $K$ of $N$ homeomorphic to $S_{0,5}$ endowed with the configuration of circles drawn in Figure \ref{fig3_2}\,(v).
Each boundary component of $K$ either is isotopic to an element of $\{a_3, \dots, a_r \}$, or is a boundary component of $N$, or bounds a M\"obius band. 
Moreover, by Lemma \ref{lem3_5}, at most one of the boundary components of $K$ bounds a M\"obius band. 
We also have $K \cap a_i = \emptyset$ for all $i \in \{3, \dots, r\}$.
Let $b$ and $c$ be the circles drawn in Figure \ref{fig3_2}\,(v).
Set $\beta = [b]$ and $\gamma = [c]$.
We have $i(\alpha_1, \beta)= i (\alpha_2,\beta)=2$ and $i (\alpha_i,\beta)=0$ for all $i \in \{3, \dots, r\}$.
We have $\beta \not\in \TT_1 (N)$, since $\{\beta, \gamma, \alpha_3, \dots, \alpha_r\}$ is a simplex of $\TT (N)$ of cardinality $\rk(N)$ containing $\beta$ (see Lemma \ref{lem3_2}), and we have $\beta \not \in \TT_2 (N)$, otherwise at least two boundary components of $K$ would bound M\"obius bands.  
So, $\beta \in \TT_0 (N)$.
\end{proof}

\begin{lem}\label{lem3_8}
Let $\AA$ be a simplex of $\TT_0 (N)$ of cardinality $\rk (N)$ and let $\alpha_1, \alpha_2 \in \AA$.
Then $\alpha_1$ is adjacent to $\alpha_2$ with respect to $\AA$ if and only if $\lambda (\alpha_1)$ is adjacent to $\lambda (\alpha_2)$ with respect to $\lambda (\AA)$.
\end{lem}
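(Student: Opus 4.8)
The plan is to characterize, for an arbitrary simplex $\BB$ of $\TT_0(N)$ of cardinality $\rk(N)$ and distinct $\gamma_1,\gamma_2\in\BB$, the relation ``$\gamma_1$ and $\gamma_2$ are adjacent with respect to $\BB$'' by conditions involving only intersection numbers of classes in $\TT_0(N)$, and then to transport these conditions through $\lambda$ using super-injectivity. As a preliminary I would note that $\lambda(\AA)$ is again a simplex of $\TT_0(N)$ of cardinality $\rk(N)$: indeed $\lambda$ is injective by Lemma \ref{lem3_5bis}, and if $\gamma\neq\gamma'$ lie in $\AA$ then $i(\gamma,\gamma')=0$, so $i(\lambda(\gamma),\lambda(\gamma'))=0$ and $|\lambda(\AA)|=|\AA|=\rk(N)$.

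The first characterization I would establish is: $\gamma_1$ and $\gamma_2$ are adjacent with respect to $\BB$ if and only if there is $\delta\in\TT_0(N)$ with $i(\gamma_1,\delta)\neq0$, $i(\gamma_2,\delta)\neq0$ and $i(\gamma_j,\delta)=0$ for every $\gamma_j\in\BB\setminus\{\gamma_1,\gamma_2\}$. The forward implication is exactly Lemma \ref{lem3_7}. For the converse, put $\delta$ in minimal position with respect to disjoint representatives of the classes of $\BB$; then $\delta$ is disjoint from every representative of $\BB\setminus\{\gamma_1,\gamma_2\}$ but meets those of $\gamma_1$ and of $\gamma_2$, so among the cyclically ordered points of $\delta$ on $\gamma_1\cup\gamma_2$ there are two consecutive ones lying respectively on $\gamma_1$ and on $\gamma_2$; the subarc of $\delta$ joining them has interior disjoint from $\BB$, hence lies in the closure of a single component of $N_\BB$, which therefore carries boundary arcs on both $\gamma_1$ and $\gamma_2$. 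This gives one implication of the lemma: if $\alpha_1$ and $\alpha_2$ are adjacent with respect to $\AA$, pick $\delta$ as above for $\AA$; then $\lambda(\delta)\in\TT_0(N)$ satisfies $i(\lambda(\alpha_1),\lambda(\delta))\neq0$, $i(\lambda(\alpha_2),\lambda(\delta))\neq0$ and $i(\lambda(\alpha_j),\lambda(\delta))=0$ for the remaining $\lambda(\alpha_j)$, because $\lambda$ is super-injective; applying the characterization to the full-cardinality simplex $\lambda(\AA)$ shows $\lambda(\alpha_1)$ and $\lambda(\alpha_2)$ are adjacent with respect to $\lambda(\AA)$.

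For the reverse implication I would establish a dual characterization of non-adjacency: $\gamma_1$ and $\gamma_2$ are \emph{not} adjacent with respect to $\BB$ if and only if there exist \emph{disjoint} classes $\delta_1,\delta_2\in\TT_0(N)$ such that $i(\gamma_1,\delta_1)\neq0$, $i(\gamma_j,\delta_1)=0$ for all $\gamma_j\in\BB\setminus\{\gamma_1\}$, $i(\gamma_2,\delta_2)\neq0$, and $i(\gamma_j,\delta_2)=0$ for all $\gamma_j\in\BB\setminus\{\gamma_2\}$. For the ``only if'' direction, note that when $\gamma_1$ and $\gamma_2$ are not adjacent the components of $N_\BB$ incident to (a copy of) $\gamma_1$ are disjoint from those incident to $\gamma_2$, so one can run the local construction of Lemma \ref{lem3_6} once inside a regular neighbourhood of the union of the components incident to $\gamma_1$ and once inside that of the components incident to $\gamma_2$; the resulting classes $\delta_1,\delta_2$ are disjoint, and the same cardinality argument (via Lemma \ref{lem3_2}) and crosscap count (via Lemma \ref{lem3_5}) used in Lemma \ref{lem3_6} keep each of them in $\TT_0(N)$. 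For the ``if'' direction, if $\gamma_1$ and $\gamma_2$ were adjacent with respect to $\BB$ they would be boundary classes of a common component $P$ of $N_\BB$, which by Lemma \ref{lem3_5} is a pair of pants or an $N_{1,2}$; since $\delta_1$ meets no element of $\BB$ other than $\gamma_1$ and $\delta_2$ meets no element other than $\gamma_2$, the arcs of $\delta_1$ in $P$ are essential arcs based on the $\gamma_1$-boundary and those of $\delta_2$ are essential arcs based on the $\gamma_2$-boundary, and a direct check in $S_{0,3}$ and in $N_{1,2}$ shows that such arcs must intersect, contradicting $i(\delta_1,\delta_2)=0$. Now the reverse implication of the lemma follows: if $\alpha_1$ and $\alpha_2$ are not adjacent with respect to $\AA$, take disjoint $\delta_1,\delta_2$ for $\AA$ as above; super-injectivity makes $\lambda(\delta_1),\lambda(\delta_2)\in\TT_0(N)$ disjoint and of the same type with respect to $\lambda(\AA)$, so the characterization applied to $\lambda(\AA)$ shows $\lambda(\alpha_1)$ and $\lambda(\alpha_2)$ are not adjacent with respect to $\lambda(\AA)$.

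I expect the main obstacle to be the dual characterization of non-adjacency, and within it the ``only if'' direction: one must carry out the Lemma \ref{lem3_6} construction simultaneously in two disjoint subsurfaces while ensuring that \emph{both} output classes avoid $\TT_1(N)$ and $\TT_2(N)$, which forces one to track how the (at most one) crosscap of the full-cardinality simplex and the auxiliary full-rank subsimplices are distributed between the two regions; the arc-crossing check in the ``if'' direction is elementary but also requires the separate treatment of the $N_{1,2}$ component occurring when $\rho$ is odd. Everything else — the cut-and-arc converse in the first characterization and the two transport steps — is routine once Lemmas \ref{lem3_6} and \ref{lem3_7} are available.
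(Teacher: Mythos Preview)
Your proposal is correct and follows essentially the same approach as the paper: use Lemma~\ref{lem3_7} to produce a class $\beta$ witnessing adjacency, use Lemma~\ref{lem3_6} twice to produce disjoint classes $\beta_1,\beta_2$ witnessing non-adjacency, and transport each witness through $\lambda$ by super-injectivity. The paper simply asserts the converse implications (``this is possible only if \dots'') without your arc arguments, and it disposes of your anticipated ``main obstacle'' in one line: the $\beta_1,\beta_2$ coming from Lemma~\ref{lem3_6} are automatically in $\TT_0(N)$ and automatically disjoint, since each $\beta_i$ lives in the union of the components of $N_\AA$ incident to $\alpha_i$, and these unions are disjoint precisely when $\alpha_1,\alpha_2$ are not adjacent---no separate crosscap bookkeeping is needed.
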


\begin{proof}
Set $\AA=\{\alpha_1, \alpha_2, \alpha_3, \dots, \alpha_r\}$.
Suppose that $\alpha_1$ is not adjacent to $\alpha_2$ with respect to $\AA$.
By Lemma \ref{lem3_6} there exist $\beta_1, \beta_2 \in \TT_0 (N)$ such that $i(\beta_1,\alpha_1) \neq 0$, $i(\beta_1, \alpha_i) = 0$ for all $i \in \{2, 3, \dots, r\}$, $i(\beta_2, \alpha_2) \neq 0$, and $i(\beta_2, \alpha_i) = 0$ for all $i \in \{1, 3, \dots, r\}$.
Since $\alpha_1$ is not adjacent to $\alpha_2$ with respect to $\AA$ we also have $i(\beta_1, \beta_2)=0$.
Since $\lambda$ is a super-injective simplicial map, we have $i(\lambda(\beta_1),\lambda(\alpha_1)) \neq 0$, $i(\lambda(\beta_1), \lambda(\alpha_i)) = 0$ for all $i \in \{2, 3, \dots, r\}$, $i(\lambda(\beta_2), \lambda(\alpha_2)) \neq 0$, $i(\lambda(\beta_2), \lambda(\alpha_i)) = 0$ for all $i \in \{1, 3, \dots, r\}$, and $i(\lambda(\beta_1), \lambda(\beta_2)) = 0$.
This is possible only if $\lambda(\alpha_1)$ is not adjacent to $\lambda(\alpha_2)$ with respect to $\lambda (\AA)$.

Suppose that $\alpha_1$ is adjacent to $\alpha_2$ with respect to $\AA$.
By Lemma \ref{lem3_7} there exists $\beta \in \TT_0 (N)$ such that $i(\beta,\alpha_1) \neq 0$, $i(\beta, \alpha_2) \neq 0$, and $i(\beta, \alpha_i) = 0$ for all $i \in \{3, \dots, r\}$.
Since $\lambda$ is a super-injective simplicial map, we have $i(\lambda(\beta),\lambda(\alpha_1)) \neq 0$, $i(\lambda(\beta), \lambda(\alpha_2)) \neq 0$, and $i(\lambda(\beta), \lambda(\alpha_i)) = 0$ for all $i \in \{3, \dots, r\}$.
This is possible only if $\lambda (\alpha_1)$ is adjacent to $\lambda (\alpha_2)$ with respect to $\lambda(\AA)$.
\end{proof}

Let $\PP=\{\alpha_1, \alpha_2, \alpha_3\}$ be a simplex of $\TT_0 (N)$ of cardinality $3$.
We choose pairwise disjoint representatives $a_1, a_2,a_3$ of $\alpha_1, \alpha_2, \alpha_3$, respectively.  
We say that $\PP$ is a \emph{3-simpants} if there exists a subsurface $P$ of $N$ homeomorphic to $S_{0,3}$ whose boundary components are $a_1, a_2, a_3$.
Let $\PP=\{\alpha_1, \alpha_2\}$ be a simplex of $\TT_0 (N)$ of cardinality $2$.
We choose disjoint representatives $a_1, a_2$ of $\alpha_1, \alpha_2$, respectively. 
We say that $\PP$ is a \emph{2-simpants} if there exists a subsurface $P$ of $N$ homeomorphic to $S_{0,3}$ whose boundary components are $a_1,a_2$ and a boundary component of $N$.
We say that $\PP$ is a \emph{simpskirt} if there exists a subsurface $P$ of $N$ homeomorphic to $N_{1,2}$ whose boundary components are $a_1,a_2$.
Let $\beta \in \TT_0(N)$.
We say that $\beta$ \emph{bounds a torus} if $N_\beta$ has two connected components, one of which, $N'$, is a one-holed torus.  
In this case the elements of $\TT(N')$ are called \emph{interior classes} of $\beta$.

\begin{lem}\label{lem3_9}
\begin{itemize}
\item[(1)]
If $\PP$ is a 3-simpants and $N_\PP$ has two connected components, then $\lambda(\PP)$ is a 3-simpants. 
\item[(2)]
If $\alpha \in \TT_0 (N)$ is non-separating and $N_\alpha$ is non-orientable, then $\lambda (\alpha)$ is non-separating.  
\item[(3)]
If $\beta$ bounds a torus and $\alpha$ is an interior class of $\beta$, then $\lambda(\beta)$ bounds a torus and $\lambda(\alpha)$ is an interior class of $\lambda(\beta)$.
\end{itemize}
\end{lem}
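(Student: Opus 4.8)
The plan is to prove the three parts by a common scheme. Extend the given configuration of curves to a simplex $\AA$ of $\TT_0(N)$ of maximal cardinality $\rk(N)$; this amounts to completing a pants decomposition of the relevant connected subsurface while staying inside $\TT_0(N)$, which is possible because $\rho\ge 5$. Since $\lambda$ is injective on $\TT_0(N)$ (Lemma \ref{lem3_5bis}) and simplicial, $\lambda(\AA)$ is again a simplex of cardinality $\rk(N)$, hence a pants decomposition, and Lemma \ref{lem3_5} restricts the homeomorphism types of the pieces of $N_\AA$ and of $N_{\lambda(\AA)}$. The combinatorics of how the given curves sit inside $\AA$ --- in particular adjacency of pairs with respect to $\AA$ --- is then transported to $\lambda(\AA)$ by Lemma \ref{lem3_8}, while Lemmas \ref{lem3_6} and \ref{lem3_7} supply the test curves needed to recognise and to exclude configurations.

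\emph{Part (1).} Extend $\PP=\{\alpha_1,\alpha_2,\alpha_3\}$ to a maximal simplex $\AA$; since $N_\PP$ is disconnected and one of its pieces is the pair of pants $P$ whose boundary consists of representatives of $\alpha_1,\alpha_2,\alpha_3$, the classes $\alpha_1,\alpha_2,\alpha_3$ are pairwise adjacent with respect to $\AA$, hence, by Lemma \ref{lem3_8}, $\lambda(\alpha_1),\lambda(\alpha_2),\lambda(\alpha_3)$ are pairwise adjacent with respect to the pants decomposition $\lambda(\AA)$. An elementary case analysis shows that three pairwise adjacent curves in a pants decomposition either cobound a common piece --- in which case $\lambda(\PP)$ is a 3-simpants and we are done --- or each of the three pairs is carried by its own piece, the three pieces being pairwise distinct. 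It remains to rule out this second possibility: here one uses that $N_\PP$ is disconnected, so that $P$ is an isolated piece of $N_\AA$, to produce via Lemma \ref{lem3_7} test curves whose intersection patterns with $\alpha_1,\alpha_2,\alpha_3$ are incompatible, after applying $\lambda$ and invoking super-injectivity, with the three-distinct-pieces configuration for $\lambda(\PP)$. I expect this exclusion step to be the main obstacle.

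\emph{Part (3).} If $\beta$ bounds a torus $N'$ and $\alpha$ is an interior class, extend $\{\beta,\alpha\}$ to a maximal simplex $\AA$ (here $\AA\cap\TT(N')=\{\alpha\}$, a one-holed torus carrying no two disjoint nonisotopic essential curves); the piece $N'\setminus\alpha$ of $N_\AA$ is a pair of pants two of whose boundary components map to $\alpha$ and the third to $\beta$, so $\beta$ is the unique curve of $\AA$ adjacent to $\alpha$ and $\alpha$ is ``self-adjacent'' with respect to $\AA$, and one may moreover choose $\AA$ so that $\beta$ is adjacent to two further curves of $\AA$. A variant of Lemmas \ref{lem3_6}--\ref{lem3_8}, detecting self-adjacency by means of test curves meeting $\alpha$ an odd number of times, then gives that $\lambda(\alpha)$ is self-adjacent with respect to $\lambda(\AA)$ with $\lambda(\beta)$ as its unique adjacent curve. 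Consequently $\lambda(\beta)$ cuts off from $N$ either a one-holed torus or a one-holed Klein bottle, in either case containing $\lambda(\alpha)$. The Klein bottle case would force $\lambda(\beta)\in\TT_2(N)$, contradicting $\lambda(\beta)\in\TT_0(N)$; hence $\lambda(\beta)$ bounds a torus and $\lambda(\alpha)$ is an interior class of $\lambda(\beta)$.

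\emph{Part (2).} If $\alpha$ is non-separating and $N_\alpha$ is non-orientable, one checks, using that $N_\alpha$ is non-orientable of negative Euler characteristic, that there is a two-sided circle $\delta$ with $i(\alpha,\delta)=1$; a regular neighbourhood of $\alpha\cup\delta$ is a one-holed torus $N'$ whose boundary $\beta$ lies in $\TT_0(N)$ (for $\rho\ge 5$, $N'$ is orientable and the other side of $\beta$ is not a one-holed Klein bottle), and $\alpha$ is an interior class of $\beta$. Applying part (3), $\lambda(\beta)$ bounds a torus and $\lambda(\alpha)$ is an interior class of it, so $\lambda(\alpha)$ lies in a one-holed torus inside $N$ and is in particular non-separating in $N$.
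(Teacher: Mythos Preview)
Your general framework---complete to a maximal simplex of $\TT_0(N)$ and transport adjacency via Lemma~\ref{lem3_8}---is the same as the paper's, but you leave the decisive steps unfinished. In Part~(1) you explicitly flag the exclusion of the ``three distinct pants'' configuration as an obstacle, and it is a real one: pairwise adjacency of $\lambda(\alpha_1),\lambda(\alpha_2),\lambda(\alpha_3)$ in a single maximal simplex $\lambda(\AA)$ does not by itself distinguish the case where they cobound one pair of pants from the case where each pair bounds its own, and the test curves furnished by Lemmas~\ref{lem3_6}--\ref{lem3_7} do not obviously separate these cases either. In Part~(3) you appeal to an unproved ``variant of Lemmas~\ref{lem3_6}--\ref{lem3_8}'' to detect self-adjacency; what would actually work here is Lemma~\ref{lem3_10} (preservation of $i=1$) together with the observation that a curve meeting $\lambda(\alpha)$ once and disjoint from the rest of $\lambda(\AA)$ forces the two pieces adjacent to $\lambda(\alpha)$ to coincide (a four-holed sphere or an $N_{1,3}$ contains no pair of two-sided curves with intersection number~$1$). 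This can be made rigorous, but you have not written it, and your Part~(2) then hangs on it.

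The paper closes all three parts simultaneously by a device you do not use: it realises the given data inside a fixed subsurface $K\cong S_{1,2}$ carrying circles $a_1,a_2,a_3,a_4,b_2$ (with $[b_2]$ bounding a one-holed torus whose interior class is $\alpha_1=[a_1]$), and works with \emph{two} maximal simplices $\AA=\{\alpha_1,\alpha_2,\alpha_3,\alpha_4,\dots,\alpha_r\}$ and $\BB=\{\alpha_1,\beta_2,\alpha_3,\alpha_4,\dots,\alpha_r\}$ differing in a single curve. The adjacency pattern in $\lambda(\BB)$---in which $\lambda(\alpha_1)$ is adjacent only to $\lambda(\beta_2)$, while $\lambda(\beta_2)$ is adjacent to $\lambda(\alpha_1),\lambda(\alpha_3),\lambda(\alpha_4)$ and $\lambda(\alpha_3)$ is adjacent to $\lambda(\alpha_4)$---restricts the local picture $K'$ to two models, $S_{0,5}$ or $S_{1,2}$. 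Then $\lambda(\alpha_2)$, which is disjoint from every element of $\lambda(\BB)\setminus\{\lambda(\beta_2)\}$ but meets $\lambda(\beta_2)$ and hence must lie in $K'$, combined with the adjacency of $\lambda(\alpha_1)$ to $\lambda(\alpha_3)$ and $\lambda(\alpha_4)$ coming from $\lambda(\AA)$, rules out $S_{0,5}$ and forces the $S_{1,2}$ configuration, from which all three conclusions follow at once. This interplay between two overlapping maximal simplices is the missing idea that replaces your unfinished exclusion steps.
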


\begin{proof}
We take a subsurface $K$ of $N$ homeomorphic to $S_{1,2}$ endowed with the configuration of circles drawn in Figure \ref{fig3_3}\,(i).
We assume that $a_3,a_4$ are essential circles and $[a_3], [a_4] \in \TT_0 (N)$.
We set $\alpha_i = [a_i]$ for all $i \in \{1,2,3,4 \}$ and $\beta_2 = [b_2]$.
The following claims are easy to prove. 

{\it Claim 1.}
Suppose that $\PP = \{\gamma_1, \gamma_2, \gamma_3\}$ is a 3-simpants and that $N_\PP$ has two connected components. 
Then $\gamma_1, \gamma_2, \gamma_3$ are non-separating and at most one of the surfaces $N_{\gamma_i}$ is orientable.
Moreover, up to renumbering $\gamma_1, \gamma_2, \gamma_3$, we can choose $K$ so that $\gamma_i=\alpha_i$ for all $i \in \{1,2,3 \}$.
In fact, the only constrain we have is that, if $N_{\gamma_i}$ is orientable, then $\gamma_i= \gamma_3= \alpha_3$.

{\it Claim 2.}
If $\alpha$ is a non-separating class such that $N_\alpha$ is non-orientable, then we can choose $K$ so that $\alpha=\alpha_1$.

{\it Claim 3.}
If $\beta$ bounds a torus and $\alpha$ is an interior class of $\beta$, then we can choose $K$ so that $\beta= \beta_2$ and $\alpha=\alpha_1$.

\begin{figure}[ht!]
\begin{center}
\begin{tabular}{cccc}
\parbox[c]{2.8cm}{\includegraphics[width=2.4cm]{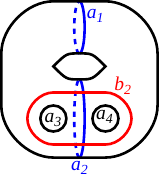}}
&
\parbox[c]{3.6cm}{\includegraphics[width=3.2cm]{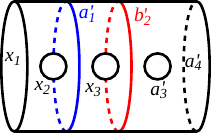}}
&
\parbox[c]{2.8cm}{\includegraphics[width=2.4cm]{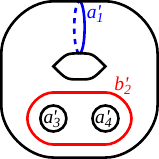}}
&
\parbox[c]{2.8cm}{\includegraphics[width=2.4cm]{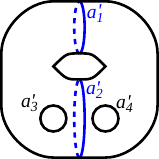}}
\\
(i) & (ii) & (iii) & (iv)
\end{tabular}

\caption{Circles in $N$ (Lemma \ref{lem3_9})}\label{fig3_3}
\end{center}
\end{figure}

We complete $\{ \alpha_1, \alpha_2, \alpha_3, \alpha_4 \}$ in a simplex $\AA=\{ \alpha_1 , \dots, \alpha_r\}$ of $\TT_0 (N)$ of cardinality $r = \rk(N)$.
Let $\BB=\{ \alpha_1, \beta_2, \alpha_3, \alpha_4, \dots, \alpha_r\}$.
Then $\BB$ is also a simplex of $\TT_0 (N)$ of cardinality $r=\rk(N)$.
We choose minimally intersecting representatives $a_i' \in \lambda (\alpha_i)$ for all $i \in \{1, \dots, r\}$ and $b_2' \in \lambda (\beta_2)$.  
By Lemma \ref{lem3_8} the elements of $\lambda(\BB)$ that are adjacent to $\lambda(\beta_2)$ with respect to $\lambda (\BB)$ are exactly $\lambda(\alpha_1), \lambda (\alpha_3), \lambda (\alpha_4)$, the class $\lambda (\alpha_1)$ is adjacent only to $\lambda (\beta_2)$ with respect to $\lambda (\BB)$, and the class $\lambda (\alpha_3)$ is adjacent to $\lambda (\alpha_4)$ with respect to $\lambda (\BB)$.
It follows that we have one of the following alternatives. 
\begin{itemize}
\item
There exists a subsurface $K'$ of $N$ homeomorphic to $S_{0,5}$ endowed with the configuration of circles drawn in Figure \ref{fig3_3}\,(ii), where each circle $x_i$ either is a boundary component of $N$, or bounds a M\"obius band.
\item
There exists a subsurface $K'$ of $N$ homeomorphic to $S_{1,2}$ endowed with the configuration of circles drawn in Figure \ref{fig3_3}\,(iii).
\end{itemize}

Since $i(\lambda(\alpha_2), \lambda(\alpha_i))=0$ for all $i \in \{1,3, \dots, r\}$ and $i(\lambda(\alpha_2), \lambda(\beta_2)) \neq 0$, the circle $a_2'$ must lie in $K'$.
By Lemma \ref{lem3_8}, $\lambda(\alpha_1)$ is adjacent to $\lambda(\alpha_3)$ and $\lambda(\alpha_4)$ with respect to $\lambda(\AA)$, hence $a_1', a_3', a_4'$ are included in the same connected component of $K' \setminus a_2'$.
Such a circle $a_2'$ does not exist in the configuration of Figure \ref{fig3_3}\,(ii), hence $K'$ is homeomorphic to $S_{1,2}$ and $a_1', b_2', a_3', a_4'$ are as shown in Figure \ref{fig3_3}\,(iii). 
This shows Part (2) and Part (3) of the lemma. 
This also shows that, up to homeomorphism, $K'$ is endowed with the configuration of circles drawn in Figure \ref{fig3_3}\,(iv), which shows Part (1). 
\end{proof}

The next lemma can be proved in the same way as Irmak--Paris \cite[Lemma 2.7]{IrmPar1} (see also Ivanov \cite[Lemma 1]{Ivano2}), so we do not give any poof. 

\begin{lem}\label{lem3_10}
Let $\alpha_1, \alpha_2 \in \TT_0 (N)$.
If $i (\alpha_1, \alpha_2) = 1$, then $i (\lambda (\alpha_1), \lambda (\alpha_2)) = 1$.
\end{lem}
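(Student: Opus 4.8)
The statement to prove is Lemma~\ref{lem3_10}: if $\alpha_1,\alpha_2\in\TT_0(N)$ with $i(\alpha_1,\alpha_2)=1$, then $i(\lambda(\alpha_1),\lambda(\alpha_2))=1$. Since $i(\alpha_1,\alpha_2)=1$ forces both circles to be non-separating with non-orientable complement-pieces absorbed appropriately, the first thing I would observe is that $i(\alpha_1,\alpha_2)=1$ implies there is a subsurface $K\cong S_{1,1}$ of $N$ containing both $a_1$ and $a_2$ as a standard pair of curves meeting once, whose boundary $\partial K$ is a two-sided circle $\delta$. (Here $\delta$ may or may not be generic, but since $\alpha_1,\alpha_2\in\TT_0(N)$ one checks $\delta$ is not null-homotopic and not $\partial$-parallel in the relevant cases; handle the degenerate low-complexity configuration separately.) The plan is to reconstruct this $S_{1,1}$-picture on the $\lambda$-side from purely combinatorial data, using the adjacency machinery of Lemma~\ref{lem3_8} and the torus-detection of Lemma~\ref{lem3_9}(3).

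**Key steps.** First I would enlarge $\{\alpha_1,\alpha_2\}$ together with $\gamma:=[\delta]$ to a maximal simplex: take a pants decomposition of $N_\delta$ (the complement of the one-holed torus) to get classes $\alpha_3,\dots,\alpha_r$ so that $\AA=\{\gamma,\alpha_3,\dots,\alpha_r\}$ is a simplex of $\TT_0(N)$ of cardinality $r=\rk(N)$ — note $\alpha_1,\alpha_2\notin\AA$ since they meet $\gamma=0$? No: $i(\alpha_1,\gamma)=i(\alpha_2,\gamma)=0$, so in fact $\{\alpha_1,\alpha_3,\dots,\alpha_r\}$ and $\{\alpha_2,\alpha_3,\dots,\alpha_r\}$ are also simplices of cardinality $\rk(N)$, with $\alpha_1,\alpha_2$ each ``filling'' the one-holed torus bounded by $\gamma$. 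Applying Lemma~\ref{lem3_9}(3) to $\gamma$ with interior class $\alpha_1$ (resp. $\alpha_2$): $\lambda(\gamma)$ bounds a torus and $\lambda(\alpha_1),\lambda(\alpha_2)$ are both interior classes of $\lambda(\gamma)$. Now the one-holed torus $N'$ bounded by a representative of $\lambda(\gamma)$ carries both minimal representatives $a_1'\in\lambda(\alpha_1)$, $a_2'\in\lambda(\alpha_2)$; since $\lambda$ is super-injective and $i(\alpha_1,\alpha_2)=1\neq 0$, we have $i(\lambda(\alpha_1),\lambda(\alpha_2))\neq 0$. The final step is the classical fact that two distinct isotopy classes of essential two-sided curves in a one-holed torus $S_{1,1}$ that meet at all must meet exactly once (the curve complex of $S_{1,1}$ is a Farey-type graph, any two curves meeting have geometric intersection $1$). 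Hence $i(\lambda(\alpha_1),\lambda(\alpha_2))=1$.

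**Main obstacle.** The delicate point is verifying that $\gamma=[\partial K]$ genuinely lies in $\TT_0(N)$ — i.e. that the one-holed torus $K$ is embedded with generic, two-sided, $\TT_0$-type boundary — so that Lemma~\ref{lem3_9}(3) applies. This requires $\rho\ge 5$ (so the complement of a one-holed torus has enough room to be neither $S_{0,3}$ nor $N_{1,2}$ nor a Klein bottle, and $\partial K$ is not $\TT_1$ or $\TT_2$) and a case check on whether $\partial K$ could bound a Klein bottle or split off an odd-genus piece. A secondary subtlety is pinning down that \emph{both} $\lambda(\alpha_1)$ and $\lambda(\alpha_2)$ are interior classes \emph{of the same} torus: this is forced because Lemma~\ref{lem3_9}(3) identifies that torus as the piece of $N_{\lambda(\gamma)}$ distinct from $\lambda(\AA\setminus\{\gamma\})$'s support, and that piece does not depend on which interior class we feed in. Because the paper explicitly says ``The next lemma can be proved in the same way as Irmak--Paris [Lemma 2.7]'', I would in fact simply cite that argument rather than re-deriving it, noting only the substitution: wherever the orientable argument uses the one-holed torus and its Farey curve graph, the same picture survives verbatim here since the relevant subsurface $K$ is orientable, and $\TT_0(N)$ is exactly designed so the auxiliary curves produced in that proof stay inside $\TT_0(N)$.
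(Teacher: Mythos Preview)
Your fallback---simply citing Irmak--Paris \cite[Lemma 2.7]{IrmPar1}---is exactly what the paper does; it gives no argument of its own and just refers to that lemma and to Ivanov \cite[Lemma 1]{Ivano2}. So at that level you match the paper.

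The self-contained argument you sketch before the fallback, however, has a genuine gap at the last step. The asserted ``classical fact that two distinct isotopy classes of essential two-sided curves in a one-holed torus $S_{1,1}$ that meet at all must meet exactly once'' is false. Essential simple closed curves in $S_{1,1}$ are parametrized by slopes $p/q\in\mathbb{Q}\cup\{\infty\}$, and $i(p/q,\,r/s)=|ps-qr|$, which takes every positive integer value; the Farey graph records \emph{which} pairs have $i=1$, not that all pairs do. So knowing (via Lemma~\ref{lem3_9}\,(3)) that $\lambda(\alpha_1)$ and $\lambda(\alpha_2)$ are both interior classes of the one-holed torus bounded by $\lambda(\gamma)$, together with $i(\lambda(\alpha_1),\lambda(\alpha_2))\neq 0$ from super-injectivity, does \emph{not} force $i(\lambda(\alpha_1),\lambda(\alpha_2))=1$. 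Your preliminary moves are fine---$\gamma=[\partial K]$ does lie in $\TT_0(N)$ (the torus side is orientable so $\gamma\notin\TT_1(N)\cup\TT_2(N)$, and $\rho\ge 5$ gives genericity), and Lemma~\ref{lem3_9}\,(3) applies---but the arguments in the cited references need a further combinatorial characterization of intersection one (built from auxiliary curves and adjacency patterns in a surrounding maximal simplex) that a super-injective map is forced to respect. Without that extra ingredient the proof does not go through.
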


\begin{lem}\label{lem3_11}
Suppose that $\rho$ is even. 
\begin{itemize}
\item[(1)]
Let $\PP$ be a $2$-simpants such that $N_\PP$ has two connected components and the connected component of $N_\PP$ non-homeomorphic to $S_{0,3}$ is non-orientable. 
Then $\lambda (\PP)$ is a 2-simpants. 
\item[(2)]
Let $\PP$ be a simpskirt such that $N_\PP$ has two connected components. 
Then $\lambda (\PP)$ is a simpskirt.
\end{itemize}
\end{lem}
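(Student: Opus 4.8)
The plan is to exploit the same mechanism as in Lemma \ref{lem3_9}: encode the geometric configuration of $\PP$ inside a simplex $\AA$ of $\TT_0(N)$ of cardinality $\rk(N)$, transport the adjacency data of this simplex through $\lambda$ by Lemma \ref{lem3_8}, and then read off from the surviving combinatorial constraints that $\lambda(\PP)$ must again be a $2$-simpants or a simpskirt. Concretely, for Part (1) I would take disjoint representatives $a_1\in\alpha_1$, $a_2\in\alpha_2$ bounding a pair of pants $P$ together with a boundary component of $N$, so that the ``other side'' of $\alpha_1\cup\alpha_2$ is a non-orientable surface $M$. Since $\rho$ is even, Lemma \ref{lem3_5} forces every component of $N_\AA$ to be an $S_{0,3}$ for any maximal simplex $\AA$; I would extend $\{\alpha_1,\alpha_2\}$ to such a simplex $\AA$ so that $\alpha_1$ is adjacent (w.r.t.\ $\AA$) only to $\alpha_2$ and to no other class of $\AA$ — this is exactly the combinatorial signature of ``$\alpha_1,\alpha_2$ cobound a pants with a hole of $N$.'' The subtlety is that this signature alone does not distinguish a $2$-simpants from a simpskirt, nor from the case where $\alpha_1$ and $\alpha_2$ cobound an $S_{0,3}$ whose third boundary is an interior class; so I will need one or two auxiliary classes (a class $\beta$ with $i(\alpha_1,\beta)\ne 0$, $i(\alpha_2,\beta)\ne 0$ realised geometrically inside the pants-plus-handle region, as in Lemma \ref{lem3_7}) to pin down which alternative occurs.

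For Part (2), the simpskirt case, I would do the analogous thing with $P\cong N_{1,2}$ having boundary $a_1,a_2$. Here the key point, using $\rho$ even again, is that in a maximal simplex $\AA$ of $\TT_0(N)$ the Klein-bottle-type piece is forbidden (those classes lie in $\TT_1(N)\cup\TT_2(N)$, not $\TT_0(N)$), so the $N_{1,2}$ piece must be further cut; I would choose $\AA$ so that inside $P$ one sees a one-sided circle (represented by a crosscap in the figures) and arrange the adjacency pattern of $\AA$ so that, after applying Lemma \ref{lem3_8}, the image classes $\lambda(\alpha_1),\lambda(\alpha_2)$ are forced to cobound a component of $N_{\lambda(\AA)}$-type configuration that can only complete to an $N_{1,2}$. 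As in Lemma \ref{lem3_9} this will come down to listing the finitely many surfaces ($S_{0,3}$, $S_{1,1}$, $N_{1,2}$, $N_{2,1}$) that can carry the required circle configuration and eliminating all but the desired one, using that $\lambda$ lands in $\TT_0(N)$ to rule out $N_{2,1}$ and Klein-bottle pieces, and using Lemma \ref{lem3_10} (intersection number one is preserved) together with the explicit intersection numbers of the auxiliary curves $b$ (and $c$) drawn in the figures to rule out the orientable $S_{1,1}$ alternative.

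The main obstacle I anticipate is exactly this last elimination step: distinguishing the orientable ``one-holed torus'' configuration from the non-orientable ``one-holed Klein bottle / $N_{1,2}$'' configuration purely from intersection-zero/nonzero data of curves in $\TT_0(N)$. Since $\TT_0(N)$ deliberately excludes the interior class of a Klein bottle and the separating classes of $\TT_1(N)$, the combinatorial fingerprint available to $\lambda$ is coarser than on all of $\TT(N)$, so I expect to need a carefully chosen configuration of two or three auxiliary curves whose pairwise intersection pattern (zero vs.\ nonzero) is realised by a subsurface $S_{1,2}$ or $N_{2,2}$ on one side but is combinatorially impossible on the other. This is the same type of figure-chasing carried out in Lemma \ref{lem3_6}, Lemma \ref{lem3_7}, and Lemma \ref{lem3_9}, and I would expect Lemma \ref{lem3_11} to be proved by essentially reusing those configurations (Figure \ref{fig3_2} and Figure \ref{fig3_3}) adapted to the present $2$-simpants and simpskirt situations, invoking Lemma \ref{lem3_8} to push adjacency through $\lambda$ and Lemma \ref{lem3_10} to control the low-intersection curves.
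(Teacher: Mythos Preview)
Your plan for Part~(1) is essentially the paper's: embed $\PP$ in a maximal simplex $\AA\subset\TT_0(N)$, push adjacencies through $\lambda$ via Lemma~\ref{lem3_8}, and then use that $\lambda(\alpha_1)$ is non-separating (Lemma~\ref{lem3_9}\,(2)) together with the adjacency pattern to force the third side of the image pants to be a boundary component of $N$. One small correction: you cannot arrange that $\alpha_1$ is adjacent \emph{only} to $\alpha_2$ in $\AA$, since when $\rho$ is even every piece of $N_\AA$ is an $S_{0,3}$, so the second copy of $a_1$ sits on a pair of pants whose other two sides are also elements of $\AA$; the paper instead works with a local $S_{1,3}$ carrying $\alpha_1,\dots,\alpha_5$ and reads the configuration from the adjacency relations among these five classes.

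Part~(2), however, has a genuine gap. When $\rho$ is even, a simpskirt $\PP=\{\alpha_1,\alpha_2\}$ \emph{cannot} be completed to a simplex of $\TT_0(N)$ (or even of $\TT(N)$) of cardinality $\rk(N)$. Indeed, the $N_{1,2}$ bounded by $a_1\cup a_2$ contains no generic two-sided circle at all (this is why $\MM(N_{1,2})$ is finite, Lemma~\ref{lem2_10}\,(1)), so it cannot be ``further cut'' as you propose; any simplex of $\TT(N)$ containing $\PP$ leaves this $N_{1,2}$ intact as a component, and by Lemma~\ref{lem3_5} such a simplex has cardinality strictly less than $\rk(N)$. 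Consequently Lemma~\ref{lem3_8}, which is stated for maximal simplices, is simply unavailable here, and your elimination scheme never gets off the ground. (Your remark about ``a one-sided circle inside $P$'' does not help: one-sided classes are not in $\TT(N)$, let alone $\TT_0(N)$, so $\lambda$ says nothing about them.)

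The paper circumvents this obstruction by abandoning the local maximal-simplex method entirely for Part~(2). It fixes a \emph{global} chain-like configuration of curves in $N$ (containing representatives of $\PP$) with the property that Lemmas~\ref{lem3_9}, \ref{lem3_10} and the just-proved Part~(1) determine the $\lambda$-images of all of these curves up to homeomorphism; these images assemble into an explicit orientable subsurface $M\subset N$ of genus $(\rho-2)/2$. A purely topological count then shows that the boundary circle $e'$ of $M$ must bound a one-holed Klein bottle, and a final intersection argument with an auxiliary class $e_0$ rules out the possibility that the two relevant boundary circles $y_1,y_2$ are isotopic, forcing both to bound M\"obius bands and hence $\lambda(\PP)$ to be a simpskirt. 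In short, the distinguishing of ``orientable vs.\ non-orientable'' that you correctly flag as the crux is achieved globally (via the genus of $M$ and the structure of its complement in $N$), not by a local pants-adjacency argument.
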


\begin{proof}
Let $\PP=\{\alpha_1, \alpha_2\}$ be a 2-simpants such that $N_\PP$ has two connected components and the connected component of $N_\PP$ non-homeomorphic to $S_{0,3}$ is non-orientable. 
We take a subsurface $K$ of $N$ homeomorphic to $S_{1,3}$ endowed with the configuration of circles drawn in Figure \ref{fig3_4}\,(i). 
In this configuration $a_1,a_2, a_3,a_4,a_5$ are essential circles that represent elements of $\TT_0 (N)$ and $x$ is a boundary component of $N$.
We can and do choose the configuration so that $\alpha_1 = [a_1]$ and $\alpha_2 = [a_2]$.
We set $\alpha_i = [a_i]$ for $i \in \{3,4,5\}$ and we complete $\{\alpha_1, \dots, \alpha_5\}$ in a simplex $\AA = \{\alpha_1, \dots, \alpha_r \}$ of $\TT_0 (N)$ of cardinality $r = \rk (N)$.
We set $\beta_i = \lambda (\alpha_i)$ and we choose $b_i \in \beta_i$ such that $b_1, \dots, b_r$ are pairwise disjoint.
We denote by $N_{\lambda (\AA)}$ the natural compactification of $N \setminus (\cup_{i=1}^r b_i)$ and by $\pi_{\lambda (\AA)} : N_{\lambda (\AA)} \to N$ the gluing map. 
We denote by $b_i'$ and $b_i''$ the two components of $\pi_{\lambda (\AA)}^{-1} (b_i)$.
Recall that, by Lemma \ref{lem3_5}, each connected component of $N_{\lambda (\AA)}$ is a pair of pants.  
By Lemma \ref{lem3_8}, $\beta_3$ is adjacent to $\beta_1, \beta_2, \beta_4, \beta_5$ with respect to $\lambda(\AA)$ and $\beta_1$ is adjacent to $\beta_2$ but not to $\beta_3$ and $\beta_4$ with respect to $\lambda (\AA)$.
It follows that, up to replacing $b_i''$ by $b_i'$, there exists a connected component $P_1$ of $N_{\lambda (\AA)}$ whose boundary components are $b_1', b_2', b_3'$ and there exists another connected component $P_2$ of $N_{\lambda (\AA)}$ whose boundary components are $b_3'', b_4', b_5'$.
Let $P_3$ (resp. $P_4$) be the connected component of $N_{\lambda (\AA)}$ containing $b_1''$ (resp. $b_2''$) in its boundary.
By Lemma \ref{lem3_9}, $\beta_1$ is non-separating and by Lemma \ref{lem3_8} the elements of $\lambda (\AA)$ adjacent to $\beta_1$ with respect to $\lambda (\AA)$ are precisely $\beta_2, \beta_3$.
This implies that $P_3=P_4$ and the third boundary component of $P_3$ is sent to a boundary component of $N$ under $\pi_{\lambda (\AA)}$. 
So, $\lambda (\PP)$ is a 2-simpants. 

\begin{figure}[ht!]
\begin{center}
\begin{tabular}{cc}
\parbox[c]{2.8cm}{\includegraphics[width=2.4cm]{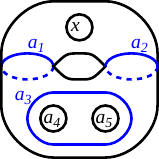}}
&
\parbox[c]{6.6cm}{\includegraphics[width=6.2cm]{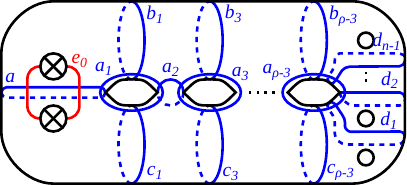}}
\\
(i) & (ii)
\end{tabular}

\bigskip
\begin{tabular}{c}
\parbox[c]{7.2cm}{\includegraphics[width=6.8cm]{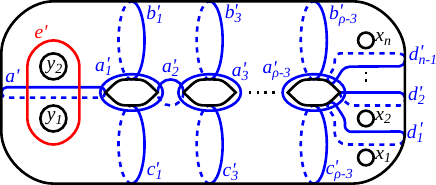}}
\\
(iii)
\end{tabular}

\caption{Circles in $N$ (Lemma \ref{lem3_11})}\label{fig3_4}
\end{center}
\end{figure}

Now we take a simpskirt $\PP$ such that $N_\PP$ has two connected components and we show that $\lambda (\PP)$ is a simpskirt. 
We assume that $n \ge 1$.
The case $n=0$ can be proved in the same way. 
Consider the configuration of circles in $N$ drawn in Figure \ref{fig3_4}\,(ii).
We can and do assume that $\PP = \{ [a], [b_1] \}$.
We choose $a' \in \lambda ([a]), a_1' \in \lambda ([a_1]), \dots, a_{\rho-3}' \in \lambda ([a_{\rho-3}]), b_1' \in \lambda ([b_1]), \dots, b_{\rho-3}' \in \lambda ([b_{\rho-3}]), c_1' \in \lambda ([c_1]), \dots, c_{\rho-3}' \in \lambda ([c_{\rho-3}]), d_1' \in \lambda ([d_1]), \dots, d_{n-1}' \in \lambda ([d_{n-1}])$ that minimally intersect.
By Lemma \ref{lem3_9}, Lemma \ref{lem3_10}, and the above, there exists an orientable subsurface $M$ of $N$ of genus $\frac{\rho-2}{2}$ endowed with the configuration of circles drawn in Figure \ref{fig3_4}\,(iii), where $x_1, \dots, x_n$ are the boundary components of $N$.
For topological reasons, the circle $e'$ bounds a one-holed Klein bottle $K$ in $N$.
This implies that either $y_1$ is isotopic to $y_2$, or $y_1,y_2$ both bound M\"obius bands. 
By Lemma \ref{lem2_10}\,(2), $\TT (K)$ is a singleton  $\{\epsilon_0'\}$.
Since $i(\lambda ([e_0]), [z'])=0$ for all $z' \in \{a_1', \dots, a_{\rho-3}', b_1', \dots, b_{\rho-3}', c_1', \dots, c_{\rho-3}', d_1', \dots, d_{n-1}' \}$ and $i( \lambda ([e_0]), [a']) \neq 0$, we have $\lambda ([e_0]) \in \TT (K)$, that is, $\lambda ([e_0]) = \epsilon_0'$ (we cannot have $\lambda ( [e_0]) = [e']$ since $[e'] \not\in \TT_0 (N)$).
If $y_1$ was isotopic to $y_2$, then we would have $[y_1] = \epsilon_0'$, hence $0 = i([y_1], [a']) = i(\lambda ([e_0]), [a']) \neq 0$: contradiction.
So, $y_1,y_2$ both bound M\"obius bands and therefore $\lambda (\PP) = \{ [a'], [b_1'] \}$ is a simpskirt.
\end{proof}

\begin{lem}\label{lem3_12}
Assume that $\rho$ is odd.
\begin{itemize}
\item[(1)]
Let $\PP$ be a 2-simpants such that $N_\PP$ has two connected components.  
Then $\lambda (\PP)$ is a 2-simpants. 
\item[(2)]
Let $\PP$ be a simpskirt such that $N_\PP$ has two connected components.
Then $\lambda (\PP)$ is a simpskirt.
\end{itemize}
\end{lem}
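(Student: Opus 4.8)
The plan is to follow the proof of Lemma~\ref{lem3_11} step by step, replacing the model subsurfaces by ones appropriate to odd genus. The only structural input that changes is Lemma~\ref{lem3_5}: when $\rho$ is odd, a simplex $\AA$ of $\TT_0(N)$ of cardinality $\rk(N)$ cuts $N$ into pieces all homeomorphic to $S_{0,3}$ except exactly one, which is homeomorphic to $N_{1,2}$. Accordingly, each model configuration will be chosen so that this distinguished $N_{1,2}$-piece lies away from the circles whose images under $\lambda$ we need to control, or is carried along explicitly when we complete a configuration to a top-dimensional simplex.

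For Part~(1), note first that a $2$-simpants can exist only when $n\ge1$, so assume $n\ge1$. Let $\PP=\{\alpha_1,\alpha_2\}$ be a $2$-simpants with $N_\PP$ disconnected, and write $N_\PP=P\sqcup N'$ with $P\cong S_{0,3}$ whose three boundary curves are representatives of $\alpha_1$ and $\alpha_2$ together with a boundary component of $N$. A count of Euler characteristics and boundary components shows that, because $\rho$ is odd, $N'$ cannot be orientable; this is why the extra hypothesis present in Lemma~\ref{lem3_11}(1) is unnecessary here. I would then use essentially the same local model $K\cong S_{1,3}$ as in Lemma~\ref{lem3_11}(1), carrying representatives $a_1,\dots,a_5$ of classes $\alpha_1,\dots,\alpha_5\in\TT_0(N)$ with one boundary component of $K$ a boundary component of $N$, complete $\{\alpha_1,\dots,\alpha_5\}$ to a simplex $\AA$ of $\TT_0(N)$ of cardinality $\rk(N)$, and apply Lemma~\ref{lem3_8} (adjacency with respect to $\AA$ is preserved by $\lambda$), Lemma~\ref{lem3_9}(2) (a non-separating class with non-orientable complement has non-separating image), and Lemma~\ref{lem3_5} to describe the pieces of $N_{\lambda(\AA)}$. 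The preserved adjacency pattern then forces the pair of pants of $N_{\lambda(\AA)}$ incident to the relevant sides of $\lambda(\alpha_1)$ and $\lambda(\alpha_2)$ to have its third boundary component sent to a boundary component of $N$, i.e.\ $\lambda(\PP)$ is a $2$-simpants.

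For Part~(2), let $\PP=\{\alpha_1,\alpha_2\}$ be a simpskirt, so there is a subsurface $P\cong N_{1,2}$ with $\partial P$ consisting of disjoint representatives of $\alpha_1$ and $\alpha_2$; assume $n\ge1$, the case $n=0$ being entirely analogous. I would set up a model configuration analogous to Figure~\ref{fig3_4}(ii), but with the orientable ``handle'' part replaced by a subsurface of the genus dictated by $\rho$ odd and with the distinguished $N_{1,2}$-piece incorporated, then use Part~(1) above together with Lemmas~\ref{lem3_8}, \ref{lem3_9} and~\ref{lem3_10} to pin down, up to homeomorphism, the configuration of minimally intersecting representatives of the relevant $\lambda$-images. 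As in the even case, for topological reasons an auxiliary circle $e'$ of this image bounds a one-holed Klein bottle $K$; by Lemma~\ref{lem2_10}(2) the set $\TT(K)$ is a singleton, and since $[e']\notin\TT_0(N)$ while $i(\lambda([e_0]),[a'])\neq0$ and $i(\lambda([e_0]),[z'])=0$ for the remaining chosen circles $[z']$, it follows that $\lambda([e_0])\in\TT(K)$. If the two crosscap-bounding circles $y_1,y_2$ of the model were isotopic, this would force $i(\lambda([e_0]),[a'])=0$, a contradiction; hence $y_1$ and $y_2$ both bound M\"obius bands and $\lambda(\PP)$ is a simpskirt.

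The main obstacle, exactly as in Lemma~\ref{lem3_11}(2), is the combinatorial bookkeeping in Part~(2): one must choose the model subsurface and the companion simplex so that, using only the data preserved by Lemmas~\ref{lem3_8}, \ref{lem3_9} and~\ref{lem3_10}, the topological type of the image configuration is uniquely determined---in particular, that the auxiliary circle $e'$ bounds a one-holed Klein bottle rather than a one-holed torus, which is where Lemma~\ref{lem3_9}(3) and Lemma~\ref{lem2_10}(2) enter. The odd-genus case adds only the minor extra complication of keeping the distinguished $N_{1,2}$-piece of the pants decomposition out of the way. Part~(1), by contrast, should be a routine adaptation of Lemma~\ref{lem3_11}(1); the only points to recheck there are that the analogues of the auxiliary circles $a_1,\dots,a_5$ can indeed be taken in $\TT_0(N)$, i.e.\ that they avoid $\TT_1(N)$ and $\TT_2(N)$.
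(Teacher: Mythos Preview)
Your proposed Part~(1) contains a genuine gap. In the even case of Lemma~\ref{lem3_11}(1), after locating the pieces $P_1,P_2$ and deducing $P_3=P_4$, Lemma~\ref{lem3_5} guarantees that \emph{every} piece of $N_{\lambda(\AA)}$ is a pair of pants, so the third boundary of $P_3$ must come from $\partial N$. In the odd case this fails: exactly one piece of $N_{\lambda(\AA)}$ is homeomorphic to $N_{1,2}$, and the adjacency pattern you record does not tell you whether that piece is $P_3$ or lies elsewhere. You can certainly choose $\AA$ so that the $N_{1,2}$-piece of $N_\AA$ is far from $\alpha_1,\dots,\alpha_5$, but you have no control over which piece of $N_{\lambda(\AA)}$ is the $N_{1,2}$-piece. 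Thus the argument you sketch only yields that $\lambda(\PP)$ is \emph{either} a 2-simpants \emph{or} a simpskirt, and stops there.

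The paper resolves this by reversing the order: it first proves exactly this weak conclusion (for both 2-simpants and simpskirts simultaneously), then proves Part~(2) directly by a different, local argument---using small model subsurfaces $N_{2,2}$ and $N_{3,2}$ and comparing two subsurfaces $K_1',K_2'$ that must coincide---and finally \emph{deduces} Part~(1) from Part~(2): completing a 2-simpants $\PP$ to a simplex $\AA$ that also contains a simpskirt $\PP'$, and noting that the unique $N_{1,2}$-piece of $N_{\lambda(\AA)}$ is already accounted for by $\lambda(\PP')$, so $\lambda(\PP)$ cannot also be a simpskirt. Your plan to adapt the global configuration of Lemma~\ref{lem3_11}(2) for Part~(2) might be salvageable, but it would need Part~(1) as input (for the $d$-curves), which you have not established; in any event, the logical dependence between (1) and (2) runs the other way in odd genus.
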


\begin{proof}
Let $\PP = \{ \alpha_1, \alpha_2 \}$ be a simplex of $\TT_0 (N)$ of cardinality $2$.
We suppose that $\PP$ is either a 2-simpants or a simpskirt, and that $N_\PP$ has two connected components.
We take a subsurface $K$ of $N$ homeomorphic to $S_{1,3}$ endowed with the configuration of circles drawn in Figure \ref{fig3_5}\,(i). 
In this configuration $a_1,a_2, a_3,a_4,a_5$ are essential circles that represent elements of $\TT_0 (N)$, the circle $x$ is a boundary component of $N$ if $\PP$ is a 2-simpants, and $x$ bounds a M\"obius band if $\PP$ is a simpskirt.
We can and do choose the configuration so that $\alpha_1 = [a_1]$ and $\alpha_2 = [a_2]$.
We set $\alpha_i = [a_i]$ for $i \in \{3,4,5\}$ and we complete $\{ \alpha_1, \dots, \alpha_5\}$ in a simplex $\AA = \{ \alpha_1, \dots, \alpha_r \}$ of $\TT_0 (N)$ of cardinality $r = \rk (N)$.
We set $\beta_i = \lambda (\alpha_i)$ and we choose $b_i \in \beta_i$ such that $b_1, \dots, b_r$ are pairwise disjoint. 
We denote by $N_{\lambda (\AA)}$ the natural compactification of $N \setminus (\cup_{i=1}^r b_i)$ and by $\pi_{\lambda (\AA)} : N_{\lambda (\AA)} \to N$ the gluing map.
We denote by $b_i'$ and $b_i''$ the two boundary components of $N_{\lambda (\AA)}$ that are sent to $b_i$ under $\pi_{\lambda (\AA)}$.
By Lemma \ref{lem3_8}, $\beta_3$ is adjacent to $\beta_1, \beta_2, \beta_4, \beta_5$ with respect to $\lambda(\AA)$ and $\beta_1$ is adjacent to $\beta_2$ but not to $\beta_3$ and $\beta_4$ with respect to $\lambda (\AA)$.
It follows that, up to replacing $b_i''$ by $b_i'$, there exists a connected component $P_1$ (resp. $P_2$) of $N_{\lambda (\AA)}$ which is a pair of pants and whose boundary components are $b_1', b_2', b_3'$ (resp. $b_3'', b_4', b_5'$).
Let $P_3$ (resp. $P_4$) be the connected component of $N_{\lambda (\AA)}$ containing $b_1''$ (resp. $b_2''$) in its boundary. 
By Lemma \ref{lem3_9}, $\beta_1$ is non-separating and by Lemma \ref{lem3_8} the only elements of $\lambda (\AA)$ adjacent to $\beta_1$ with respect to $\lambda (\AA)$ are $\beta_2, \beta_3$.
This implies that $P_3=P_4$ and either $P_3$ is a pair of pants and its third boundary component is sent to a boundary component of $N$ under $\pi_{\lambda (\AA)}$, or $P_3$ is homeomorphic to $N_{1,2}$.
We conclude that either $\lambda (\PP)$ is a 2-simpants, or $\lambda (\PP)$ is a simpskirt. 

\begin{figure}[ht!]
\begin{center}
\begin{tabular}{ccc}
\parbox[c]{2.8cm}{\includegraphics[width=2.4cm]{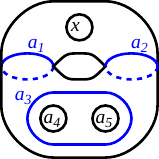}}
&
\parbox[c]{3.4cm}{\includegraphics[width=3cm]{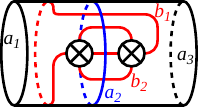}}
&
\parbox[c]{4.6cm}{\includegraphics[width=4.2cm]{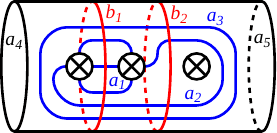}}
\\
(i) & (ii) & (iii)
\end{tabular}

\caption{Circles in $N$ (Lemma \ref{lem3_12})}\label{fig3_5}
\end{center}
\end{figure}

Suppose that $\PP$ is a simpskirt and that the connected component of $N_\PP$ non-homeomor\-phic to $N_{1,2}$ is non-orientable.  
Let $K$ be a subsurface of $N$ homeomorphic to $N_{2,2}$ endowed with the configuration of circles drawn in Figure \ref{fig3_5}\,(ii).
We suppose that the complement of $K$ in $N$ is connected. 
In particular, $a_1,a_2,a_3,b_1,b_2$ are non-separating, hence they represent elements of $\TT_0 (N)$. 
We can and do choose $K$ such that $\alpha_1 = [a_1]$ and $\alpha_2 = [a_2]$.
We set $\alpha_3 = [a_3]$, $\beta_1 = [b_1]$, and $\beta_2 = [b_2]$.
We choose minimally intersecting representatives $a_i' \in \lambda (\alpha_i)$ for $i \in \{1,2,3\}$ and $b_j' \in \lambda (\beta_j)$ for $j \in \{1,2\}$. 
By Lemma \ref{lem3_9}, $\{ \lambda(\alpha_1), \lambda (\beta_1), \lambda (\beta_2) \}$ and $\{ \lambda(\alpha_3), \lambda (\beta_1), \lambda (\beta_2) \}$ are 3-simpants.  
It follows that there exists a subsurface $K_1'$ of $N$ homeomorphic to $S_{1,2}$ or $N_{2,2}$, whose boundary components are $a_1'$ and $a_3'$, and which contains $b_1'$ and $b_2'$ in its interior.
On the other hand, by the above, each of $\{ \lambda (\alpha_1), \lambda (\alpha_2)\}$ and $\{ \lambda (\alpha_2), \lambda (\alpha_3)\}$ is either a 2-simpants or a simpskirt.  
It follows that there exists a subsurface $K_2'$ of $N$ homeomorphic either to $S_{0,4}$, or to $N_{1,3}$, or to $N_{2,2}$, whose boundary components are $a_1'$, $a_3'$ and possibly boundary components of $N$, and which contains $a_2'$ in its interior. 
Since $i(\lambda(\alpha_2), \lambda(\beta_2)) \neq 0$, we have $a_2' \cap b_2' \neq \emptyset$, hence $K_1'=K_2'$.
We conclude that $K_1'=K_2'$ is homeomorphic to $N_{2,2}$ and $\lambda (\PP)=\{\lambda (\alpha_1), \lambda(\alpha_2) \}$ is a simpskirt. 

Suppose that $\PP = \{\alpha_1, \alpha_2 \}$ is a simpskirt and the connected component of $N_\PP$ non-homeomorphic to $N_{1,2}$ is orientable.  
Let $K$ be a subsurface of $N$ homeomorphic to $N_{3,2}$ endowed with the configuration of circles drawn in Figure \ref{fig3_5}\,(iii). 
We assume that the complement of $K$ in $N$ is connected.  
In particular, $a_1,a_2,a_4,a_5,b_1,b_2$ are non-separating, and therefore they represent elements of $\TT_0 (N)$.
We choose $K$ so that $N_{[a_3]}$ has two connected components, a non-orientable one of genus $3$ and an orientable one. 
In particular, we also have $[a_3] \in \TT_0 (N)$. 
We can and do choose $K$ such that $\alpha_1 = [a_1]$ and $\alpha_2 = [a_2]$.
We set $\alpha_i = [a_i]$ for $i \in \{ 3,4,5 \}$, $\beta_j = [b_j]$ for $j \in \{ 1,2 \}$, and we complete $\{ \alpha_1, \dots, \alpha_5 \}$ in a simplex $\AA = \{ \alpha_1 , \dots, \alpha_r \}$ of $\TT_0 (N)$ of cardinality $r = \rk (N)$.
We choose minimally intersecting representatives $a_i' \in \lambda (\alpha_i)$ for $i \in \{1, \dots, r\}$ and $b_j' \in \lambda (\beta_j)$ for $j \in \{1,2\}$.  
By the above, $\{ \lambda (\alpha_4), \lambda (\beta_1) \}$, $\{ \lambda (\beta_1), \lambda (\beta_2) \}$, and $\{ \lambda (\beta_2), \lambda (\alpha_5) \}$ are simpskirts.  
It follows that there exists a subsurface $K_1'$ of $N$ homeomorphic to $N_{3,2}$ whose boundary components are $a_4',a_5'$ and which contains $b_1',b_2'$ in its interior.  
By Lemma \ref{lem3_8}, $\lambda (\alpha_3)$ is adjacent to $\lambda (\alpha_1), \lambda (\alpha_2), \lambda (\alpha_4), \lambda (\alpha_5)$ with respect to $\lambda (\AA)$, hence there exists a subsurface $R'$ of $N$ homeomorphic to $S_{0,4}$ whose boundary components are $a_1', a_2', a_4', a_5'$ and which contains $a_3'$ in its interior.
Recall that we know that $\{\lambda (\alpha_1), \lambda (\alpha_2) \}$ is either a 2-simpants or a simpskirt.  
If $\{ \lambda (\alpha_1), \lambda (\alpha_2) \}$ was a 2-simpants, then there would exist a subsurface $K_2'$ of $N$ homeomorhic to $S_{1,3}$ or $N_{2,3}$, containing $R'$, and whose boundary components are $a_4', a_5'$ and a boundary component of $N$.
Since $i (\alpha_3, \beta_1) \neq 0$, we would have $a_3' \cap b_1' \neq \emptyset$, hence $K_1' = K_2'$: contradiction.
So, $\{ \lambda (\alpha_1), \lambda (\alpha_2) \}$ is a simpskirt.

Suppose now that $\PP=\{\alpha_1, \alpha_2\}$ is a 2-simpants. 
We know that $\lambda (\PP)$ is a simpskirt or a 2-simpants.  
We complete $\PP=\{ \alpha_1, \alpha_2\}$ in a simplex $\AA = \{ \alpha_1, \dots, \alpha_r\}$ of $\TT_0 (N)$ of cardinality $r = \rk(N)$.
We can and do choose $\AA$ so that $\PP' = \{ \alpha_2, \alpha_3\}$ is a simpskirt and $N_{\PP'}$ has two connected components.  
By Lemma \ref{lem3_5} there is at most one pair $\{ \lambda (\alpha_i), \lambda (\alpha_j) \}$ in $\lambda (\AA)$ which is a simpskirt.  
By the above the pair $\{ \lambda (\alpha_2), \lambda (\alpha_3)\}$ is a simpskirt, hence $\{ \lambda (\alpha_1), \lambda (\alpha_2) \}$ is a 2-simpants. 
\end{proof}


\subsection{Proof of Theorem \ref{thm3_1}}\label{subsec3_3}

In this subsection $\GG$ denotes a finite index subgroup of $\MM (N)$ and $\varphi : \GG \to \MM (N)$ an injective homomorphism.
Let $\mu : \TT_0(N) \to \TT_0 (N)$ be the super-injective simplicial map induced by $\varphi$ (see Proposition \ref{prop3_4}).

The following lemma is well-known for orientable surfaces (see Castel \cite[Proposition 2.1.3]{Caste1} for example). 
It can be proved for non-orientable surfaces in the same way as for orientable surfaces.

\begin{lem}\label{lem3_13}
Let $\{x_1, \dots, x_\ell\}$, $\{y_1, \dots, y_\ell\}$ be two collections of essential circles such that:
\begin{itemize}
\item[(a)]
$x_1, \dots, x_\ell$ (resp. $y_1, \dots, y_\ell$) are pairwise non-isotopic and they minimally intersect; 
\item[(b)]
$x_i$ is isotopic to $y_i$ for all $i \in \{1, \dots, \ell\}$;
\item[(c)]
there exist no three indices $i,j,k \in \{1, \dots, \ell\}$ such that $i([x_i],[x_j]) \neq 0$, $i([x_j], [x_k]) \neq 0$, and $i ([x_k], [x_i]) \neq 0$.
\end{itemize}
Then there exists a homeomorphism $H: N \to N$ isotopic to the identity such that $H(x_i) = y_i$ for all $i \in \{1, \dots, \ell\}$.
\end{lem}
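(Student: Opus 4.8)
The plan is to prove the lemma by induction on $\ell$. For $\ell=1$ the statement is the isotopy extension theorem: two isotopic essential circles are carried one onto the other by an ambient isotopy of $N$, whose time-one map is a homeomorphism isotopic to the identity. Assume now $\ell\ge 2$. The subfamilies $\{x_1,\dots,x_{\ell-1}\}$ and $\{y_1,\dots,y_{\ell-1}\}$ still satisfy (a), (b) and (c), so by induction there is a homeomorphism $H_1$ isotopic to the identity with $H_1(x_i)=y_i$ for $i\le \ell-1$. Replacing each $x_i$ by $H_1(x_i)$ (which preserves all the hypotheses, since $H_1$ is a homeomorphism isotopic to the identity), we may assume $x_i=y_i$ for $i\le\ell-1$, so that it remains to find a homeomorphism $H_2$ isotopic to the identity with $H_2(y_i)=y_i$ for $i\le\ell-1$ and $H_2(x_\ell)=y_\ell$; then $H=H_2\circ H_1$ answers the lemma.

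To build $H_2$, set $J=\{\,j\le\ell-1\mid i([y_j],[y_\ell])\ne 0\,\}$; as $x_\ell$ is isotopic to $y_\ell$, $J$ is also the set of $j$ with $i([y_j],[x_\ell])\ne 0$. The key observation, and the one place where hypothesis (c) is really used, is that the circles $\{\,y_j\mid j\in J\,\}$ are \emph{pairwise disjoint}: otherwise, for distinct $j,j'$ in $J$, the triple $\{x_\ell,y_j,y_{j'}\}$ would consist of three pairwise non-disjoint circles, contradicting (c). Hence $Z=\bigcup_{j\in J}y_j$ is a multicurve. I would then cut $N$ along $Z$, obtaining a (possibly disconnected) surface $N_Z$; since $x_\ell$ and $y_\ell$ are each in minimal position with $Z$, they cut into systems $\mathcal{A}$, $\mathcal{A}'$ of essential properly embedded arcs and circles in $N_Z$, and since $x_\ell$ and $y_\ell$ are isotopic in $N$, a standard argument on cut surfaces (normal coordinates) shows that $\mathcal{A}$ and $\mathcal{A}'$ are isotopic in $N_Z$. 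One realizes this isotopy by an ambient isotopy of $N_Z$ whose restriction to the two copies $y_j^{+},y_j^{-}$ of each $y_j$ in $\partial N_Z$ is compatible with the gluing, so that it descends to an ambient isotopy of $N$ fixing $Z$ and carrying $x_\ell$ onto $y_\ell$. Finally, for $i\le\ell-1$ with $i\notin J$, the circle $y_i$ is disjoint from $x_\ell$ and from $y_\ell$; after arranging $x_\ell$ and $y_\ell$ to cobound an embedded annulus $A$ meeting $Z$ only in spanning arcs, one checks that $A$, and hence the support of the isotopy, is disjoint from each such $y_i$ (a component of $y_i\cap A$ would be either an inessential circle or a core of $A$, that is, a curve isotopic to $x_\ell$, and both are excluded). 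Thus the resulting $H_2$ fixes every $y_i$, $i\le\ell-1$, and we are done.

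The hard part is the cut-surface step: producing the ambient isotopy of $N_Z$ that realizes the isotopy of the arc systems $\mathcal{A}$ and $\mathcal{A}'$ \emph{and} restricts compatibly to the two sides of each component of $Z$, so that it descends to $N$, together with the normalizations it requires — putting $x_\ell$ in minimal position with $y_\ell$ without destroying its minimal position with $Z$, and extracting the spanning-arc and annulus picture. This is exactly where condition (c) is needed, first to guarantee that $Z$ is a genuine multicurve and then to exclude the intersection patterns in which the isotopy could not be carried out; once these normalizations are in place, the remainder is the usual change-of-coordinates bookkeeping, which is insensitive to orientability, and this is why the lemma holds for non-orientable $N$ exactly as in the orientable case treated by Castel.
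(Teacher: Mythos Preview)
The paper does not actually prove this lemma: it cites Castel \cite{Caste1} for the orientable case and asserts without detail that the non-orientable case is identical. So there is no ``paper's own proof'' to compare against. Your inductive strategy --- reduce to $x_i=y_i$ for $i\le\ell-1$ and then use hypothesis (c) to see that the curves $y_j$ with $j\in J$ form a multicurve --- is the standard one and presumably what Castel does.

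That said, the inductive step as you have written it has an expository gap that borders on a real one. You run two mechanisms in parallel --- (A) cut along $Z$, match the arc systems in $N_Z$, and descend; (B) push across an annulus $A$ cobounded by $x_\ell$ and $y_\ell$ --- and then use (B) to handle the curves $y_i$ with $i\notin J$. But the ambient isotopy of $N_Z$ produced by (A) has no reason to have support contained in~$A$, so it need not fix those $y_i$; and conversely, once you have the annulus $A$ the cut-surface step is unnecessary. The clean argument is to do everything via (B):
first remove the bigons between $x_\ell$ and $y_\ell$ one at a time. Each innermost bigon $B$ is disjoint from every $y_i$ with $i\notin J$ (any component of $y_i\cap B$ would be a circle in a disk) and meets each $y_j$ with $j\in J$ only in arcs spanning from the $x_\ell$-side to the $y_\ell$-side (an arc with both ends on the same side would give a bigon with $x_\ell$ or $y_\ell$, contradicting minimal position). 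Hence the bigon removal can be realized by an ambient isotopy supported near $B$ that fixes every $y_i$ setwise. Once $x_\ell\cap y_\ell=\emptyset$ (here one uses that the curves are two-sided), they cobound an annulus $A$; by exactly the same spanning-arc argument $A\cap y_j$ consists of spanning arcs for $j\in J$ and $A\cap y_i=\emptyset$ for $i\notin J$, and the push of $x_\ell$ across $A$ is an ambient isotopy supported near $A$ fixing every $y_i$. This is presumably what you intend, but your text does not make clear that the bigon-removal step must itself be performed by an isotopy fixing all the $y_i$, nor that the ``ambient isotopy of $N_Z$'' is in fact supported in $A$.

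One further caveat: your final remark that the lemma holds for non-orientable $N$ ``exactly as in the orientable case'' is slightly too quick. If $x_\ell$ is one-sided, then $x_\ell$ and $y_\ell$ cannot be made disjoint (their $\Z/2$-intersection number is~$1$), so there is no annulus $A$ and the last step needs a different argument. In the paper's applications (Lemmas~\ref{lem3_14}--\ref{lem3_16}) all the curves involved are two-sided, so this does not affect the use of the lemma there, but the statement as written allows one-sided circles.
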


\begin{lem}\label{lem3_14}
Let $\gamma \in \TT_1 (N)$.
Suppose that one of the connected components of $N_\gamma$ is (non-orientable) of genus $1$.
Then there exist $\gamma' \in \TT_1 (N)$ and $z,z' \in \Z \setminus \{0\}$ such that $t_\gamma^z \in \GG$ and $\varphi(t_\gamma^z) = t_{\gamma'}^{z'}$.
\end{lem}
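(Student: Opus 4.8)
The plan is to bring Lemma \ref{lem3_14} into the range of the control we already have on $\mu$ over $\TT_0(N)$ (Proposition \ref{prop3_4}) by surrounding $\gamma$ with a tight configuration of two-sided circles lying \emph{outside} $\TT_1(N)\cup\TT_2(N)$, and then to identify $\varphi(t_\gamma^z)$ by commutation relations and canonical reduction systems. Note first that, since $\gamma\in\TT_1(N)$ has a genus one side, both components of $N_\gamma$ are non-orientable of odd genus, so $\rho$ is even; write $N_\gamma=A\sqcup B$ with $A\cong N_{1,b}$ (here $b\ge 2$, hence $n\ge 1$) and $B$ non-orientable of genus $\rho-1$.

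The first step is to build a simplex $\AA=\{\gamma\}\sqcup\AA'$ of $\TT(N)$ with $\AA'\subset\TT_0(N)$ such that $N_{\AA'}$ (cutting along $\AA'$ but not along $\gamma$) is a disjoint union of pairs of pants, of copies of $N_{1,2}$, and of exactly one component $R\cong N_{1,3}$, this $R$ being the one containing $\gamma$ in its interior. Concretely one picks inside $A$ a copy $Q\cong N_{1,2}$ with $\gamma$ on its boundary (so $A\setminus Q\cong S_{0,b}$), a pair of pants $P_B$ on the $B$-side of $\gamma$ adjacent to $\gamma$, and completes $A\setminus Q$ and $B\setminus P_B$ to pants decompositions; then $R=Q\cup_\gamma P_B\cong N_{1,3}$, and one must check --- this is routine but not immediate --- that the curves of $\AA'$ can all be taken in $\TT_0(N)$. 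The second step is to transport this picture through $\mu$: using the combinatorial results of Subsection \ref{subsec3_2} (adjacency, Lemma \ref{lem3_8}; behaviour of $3$-simpants, of non-separating classes with non-orientable complement and of tori, Lemma \ref{lem3_9}; behaviour of $2$-simpants and simpskirts, Lemmas \ref{lem3_11} and \ref{lem3_12}) together with the change-of-coordinates principle of Lemma \ref{lem3_13}, the surface $N_{\mu(\AA')}$ has the same topological type as $N_{\AA'}$; in particular its only component carrying a generic two-sided circle is a copy $R'\cong N_{1,3}$.

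The third step is the algebra. Choose $z\in\Z\setminus\{0\}$ with $t_\gamma^z\in\GG$ and, by Proposition \ref{prop3_4}, for each $c\in\AA'$ choose $k_c,\ell_c\in\Z\setminus\{0\}$ with $t_c^{k_c}\in\GG$ and $\varphi(t_c^{k_c})=t_{\mu(c)}^{\ell_c}$. Since $i(\gamma,c)=0$, $t_\gamma^z$ commutes with $t_c^{k_c}$, so $\varphi(t_\gamma^z)$ commutes with $t_{\mu(c)}^{\ell_c}$, and --- Dehn twists having infinite order --- with $t_{\mu(c)}$ itself (Stukow's commutation criterion, exactly as in the proof of Proposition \ref{prop3_4}); in particular $\varphi(t_\gamma^z)$ fixes $\mu(c)$ for every $c\in\AA'$. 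By injectivity $\varphi(t_\gamma^z)$ has infinite order, hence is not periodic; its centraliser contains the free abelian group of rank $|\AA'|\ge 2$ generated by the $t_{\mu(c)}$, so by Lemma \ref{lem2_11} it is not pseudo-Anosov; hence it is reducible and $\SS(\varphi(t_\gamma^z))\ne\emptyset$. From the definition of an essential reduction class, every element of $\SS(\varphi(t_\gamma^z))$ has zero intersection with every $\mu(c)$, so it lies in a component of $N_{\mu(\AA')}$; since the pants components and the $N_{1,2}$ components carry no generic two-sided circle and $N_{1,3}$ contains no two disjoint non-isotopic generic two-sided circles, $\SS(\varphi(t_\gamma^z))$ has at most one two-sided class, say $\gamma'\in\TT(R')$. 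It cannot consist of one-sided classes only: if it did, $Z_{\SS(\varphi(t_\gamma^z))}$ would be trivial (Proposition \ref{prop2_1}), the reduction homomorphism would be injective, and the (necessarily periodic) reduction of $\varphi(t_\gamma^z)$ would make it periodic, a contradiction. So $\gamma'\in\SS(\varphi(t_\gamma^z))$; being a generic two-sided circle inside an $N_{1,3}$-piece of $N_{\mu(\AA')}$, $\gamma'$ separates $N$ into a non-orientable genus one piece and a non-orientable genus $\rho-1$ piece, i.e.\ $\gamma'\in\TT_1(N)$ with a genus one side. Finally the reduction of $\varphi(t_\gamma^z)$ along $\SS(\varphi(t_\gamma^z))$ is periodic on every component (it commutes with enough Dehn twists to exclude pseudo-Anosov pieces), so a power $\varphi(t_\gamma^{zq})$ lies in $\Ker\Lambda_{\SS(\varphi(t_\gamma^z))}=\langle t_{\gamma'}\rangle$ (Proposition \ref{prop2_1}, the one-sided classes contributing nothing), whence $\varphi(t_\gamma^{zq})=t_{\gamma'}^{z'}$ with $z'\ne 0$; replacing $z$ by $zq$ gives the statement.

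The main obstacle is the construction and transport of the configuration in the first two steps: exhibiting $\AA'\subset\TT_0(N)$ with $N_{\AA'}$ of the prescribed form (pairs of pants, copies of $N_{1,2}$, and a single $N_{1,3}$ containing $\gamma$), and then certifying --- via the lemmas of Subsection \ref{subsec3_2} and a concrete choice of circles in the spirit of Figures \ref{fig3_2}--\ref{fig3_5} --- that $N_{\mu(\AA')}$ has the same form, so that the $N_{1,3}$-piece $R'$, and with it $\gamma'$, is well defined. The algebraic half of the argument is routine once Propositions \ref{prop2_1} and \ref{prop3_4} and Lemma \ref{lem2_11} are available.
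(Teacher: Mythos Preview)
Your overall strategy---surround $\gamma$ by a simplex $\AA'\subset\TT_0(N)$, push it through $\mu$, and trap $\SS(\varphi(t_\gamma^z))$ inside the unique ``non-trivial'' piece of $N_{\mu(\AA')}$---is close in spirit to the paper's, and the first two steps can indeed be carried out along the lines you indicate. The genuine gap is in the third step, precisely where you say the argument becomes routine.

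You assert that ``being a generic two-sided circle inside an $N_{1,3}$-piece of $N_{\mu(\AA')}$, $\gamma'$ separates $N$ into a non-orientable genus one piece and a non-orientable genus $\rho-1$ piece''. This is false. The set $\TT(R')$ has three elements, one for each boundary component of $R'\cong N_{1,3}$; only one of them (the one corresponding to the boundary $\mu(c)$ coming from the $A$-side) lies in $\TT_1(N)$. With your natural choice of $P_B$ (say $e_1,e_2$ non-separating and $B\setminus P_B$ connected), the other two curves in $\TT(R')$ are \emph{non-separating} in $N$, hence lie in $\TT_0(N)$. Your reduction-theory argument only shows that the unique two-sided class in $\SS(\varphi(t_\gamma^z))$ is \emph{some} element of $\TT(R')$; nothing you have written excludes the two wrong candidates. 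So the conclusion $\gamma'\in\TT_1(N)$ is unjustified as stated.

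The paper handles exactly this point differently. It introduces a transverse curve $b$ (and curves $d_j$) that intersect every curve of the disjoint family $\{a_1,\dots,a_{\rho-2},d_2,\dots,d_n\}$; since $f=\varphi(t_\gamma^u)$ fixes $\mu([b])$ as well, none of the $\mu(\alpha_i),\mu(\delta_k)$ can be an essential reduction class for $f$. The paper then \emph{defines} $\gamma'=[H(c_1)]$ explicitly via a realising homeomorphism $H$ and proves $f(\gamma')=\gamma'$ by a regular-neighbourhood argument (a neighbourhood of $H(a_1)\cup H(b)\cup H(d_2)$ has $H(c_1)$ as the distinguished boundary). Only then does reduction along a simplex \emph{containing} $\gamma'$ force $f^v\in\langle t_{\gamma'}\rangle$. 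Your approach can be repaired---for instance, observe that the two ``wrong'' curves $\gamma_2,\gamma_3\in\TT(R)$ lie in $\TT_0(N)$, so $\mu(\gamma_2),\mu(\gamma_3)\in\TT(R')\cap\TT_0(N)$, and injectivity of $\varphi$ forbids $\gamma'=\mu(\gamma_2)$ or $\mu(\gamma_3)$, leaving only the $\TT_1$ curve---but some such extra argument is required, and it is not the routine step you describe.
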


\begin{proof}
Consider the configuration of circles in $N$ drawn in Figure \ref{fig3_6}.
We set $\alpha_i = [a_i]$ for $i \in \{1, \dots, \rho-2\}$, $\beta = [b]$, $\gamma_j = [c_j]$ for $j \in \{1, \dots, n\}$, $\delta_j = [d_j]$ for $j \in \{1, \dots, n\}$, and $\epsilon_j = [e_j]$ for $j \in \{2, \dots, n\}$.
We can and do choose this configuration so that $\gamma=\gamma_j$ for some $j \in \{1, \dots, n\}$.
By the results of Subsection \ref{subsec3_2}, there exists a homeomorphism $H : N \to N$ such that $[H(u)] = \mu ([u])$ for all $u \in \{ a_1, \dots, a_{\rho-2}, b, d_1, \dots, d_n \}$.
We show by induction on $j$ that $[H (e_j)] = \mu (\epsilon_j)$ for all $j \in \{2, \dots, n\}$.
The class $\mu (\epsilon_2)$ is the unique element of $\TT_0 (N)$ that satisfies $i(\mu (\epsilon_2), \eta)=0$ for all $\eta \in \{ \mu (\alpha_1), \dots, \mu (\alpha_{\rho-2}), \mu (\beta), \mu (\delta_1), \mu(\delta_3), \dots, \mu(\delta_n) \}$.
Since $[H(e_2)]$ also satisfies this property, we have $\mu (\epsilon_2) = [H(e_2)]$.
We assume that $3 \le j \le n$ and $\mu (\epsilon_k) = [H (e_k)]$ for all $k \in \{2, \dots, j-1\}$.
The class $\mu (\epsilon_j)$ is the unique element of $\TT_0 (N)$ that satisfies $i (\mu (\epsilon_j), \eta) = 0$ and $\mu (\epsilon_j) \neq \eta$ for all $\eta \in \{\mu (\alpha_1), \dots, \mu (\alpha_{\rho-2}), \mu (\beta), \mu(\delta_1), \mu (\delta_{j+1}), \dots, \mu (\delta_n), \mu(\epsilon_2), \dots, \mu (\epsilon_{j-1}) \}$.
Since $[H (e_j)]$ also satisfies this property, we have $[H(e_j)] = \mu (\epsilon_j)$.

\begin{figure}[ht!]
\begin{center}
\begin{tabular}{cc}
\parbox[c]{4.8cm}{\includegraphics[width=4.4cm]{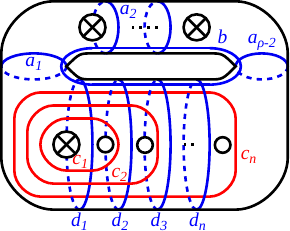}}
&
\parbox[c]{4.8cm}{\includegraphics[width=4.4cm]{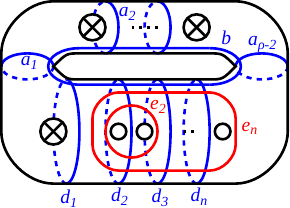}}
\end{tabular}

\caption{Circles in $N$ (Lemma \ref{lem3_14})}\label{fig3_6}
\end{center}
\end{figure}

We set $\gamma_j' = [H(c_j)]$ and we show by induction on $j$ that there exist $z_j,z_j' \in \Z \setminus \{ 0 \}$ such that $t_{\gamma_j}^{z_j} \in \GG$ and $\varphi (t_{\gamma_j}^{z_j}) = t_{\gamma_j'}^{z_j'}$.
Here and in the other proofs we use the well-known fact (which is a consequence of Lemma \ref{lem2_4} and Lemma \ref{lem2_5}) that, if an element $f \in \MM(N)$ commutes with a non-trivial power of a Dehn twist $t_\alpha$, then $f(\alpha) = \alpha$.
Suppose that $j=1$.
Let $u \in \Z \setminus \{ 0 \}$ such that $t_{\gamma_1}^u \in \GG$.
Set $f = \varphi(t_{\gamma_1}^u)$.
The element $t_{\gamma_1}^u$ commutes with every power of $t_\eta$ for all $\eta \in \CC= \{ \alpha_1, \dots, \alpha_{\rho-2}, \beta, \delta_2, \dots, \delta_n\}$.
By Proposition \ref{prop3_4} it follows that $f$ commutes with a non-trivial power of $t_{\eta'}$ for all $\eta' \in \mu (\CC)$, hence $f(\eta') = \eta'$ for all $\eta' \in \mu(\CC)$.
In particular each element of $\mu(\CC)$ is a reduction class for $f$.
None of them is an essential reduction class since $i (\mu (\alpha_i), \mu (\beta)) \neq 0$ for all $i \in \{1, \dots, \rho-2\}$ and $i (\mu (\beta), \mu (\delta_j)) \neq 0$ for all $j \in \{2, \dots, n\}$.
By Lemma \ref{lem3_13}, $f$ admits a representative $F: N \to N$ such that $F(H(a_1)) = H(a_1)$, $F(H(b)) = H(b)$, and $F(H(d_2)) = H(d_2)$.
Take a closed regular neighborhood $U$ of $H(a_1) \cup H(b) \cup H(d_2)$ such that $H(c_1)$ is a boundary component of $U$.
Observe that there is a unique connected component $K$ of $\overline{N \setminus U}$ homeomorphic to $N_{1,2}$.
We have $\partial K \cap \partial U = H(c_1)$, hence $F(H(c_1))$ is isotopic to $H(c_1)$, that is, $f (\gamma_1') = \gamma_1'$.
Let $\AA=\{ \mu (\alpha_1), \dots, \mu(\alpha_{\rho-2}), \mu (\delta_2), \dots, \mu(\delta_n), \gamma_1' \}$.
Note that $\AA$ is a simplex of $\TT (N)$ and $f(\eta') = \eta'$ for all $\eta' \in \AA$.
Let $\Lambda_\AA : \MM_\AA (N) \to \MM (N_\AA)$ be the reduction homomorphism along $\AA$.
Each connected component of $N_\AA$ is homeomorphic to $S_{0,3}$ or $N_{1,2}$ hence, by Lemma \ref{lem2_10}\,(1), $\MM (N_\AA)$ is finite.
It follows that there exists $v \in \Z \setminus \{ 0 \}$ such that $f^v \in Z_\AA = \Ker\,\Lambda_\AA$.
By the above the only element of $\AA$ which can be an essential reduction class for $f^v$ is $\gamma_1'$, hence, by Lemma \ref{lem2_5}, $f^v$ is a power of $t_{\gamma_1'}$.
We set $z_1 = uv$.
Then there exists $z_1' \in \Z \setminus \{ 0 \}$ such that $\varphi (t_{\gamma_1}^{z_1}) = t_{\gamma_1'}^{z_1'}$.

We assume that $j \ge 2$ plus the inductive hypothesis. 
Let $u \in \Z \setminus \{ 0 \}$ such that $t_{\gamma_j}^u \in \GG$.
Set $f = \varphi(t_{\gamma_j}^u)$.
The element $t_{\gamma_j}^u$ commutes with every power of $t_\eta$ for all $\eta \in \{ \alpha_1, \dots, \alpha_{\rho-2}, \beta, \delta_{j+1}, \dots,\delta_n, \epsilon_2, \dots, \epsilon_j, \gamma_1, \dots, \gamma_{j-1}\}$.
By Proposition \ref{prop3_4} and the inductive hypothesis it follows that $f$ commutes with a non-trivial power of $t_{\eta'}$ for all $\eta' \in \CC' = \{ \mu (\alpha_1), \dots, \mu(\alpha_{\rho-2}), \mu(\beta), \mu(\delta_{j+1}), \dots, \mu (\delta_n), \mu(\epsilon_2), \dots, \mu(\epsilon_j), \gamma_1', \dots, \gamma_{j-1}'\}$,
\break
hence $f(\eta') = \eta'$ for all $\eta' \in \CC'$.
In particular, each element of $\CC'$ is a reduction class for $f$.
None of them is an essential reduction class since $i (\mu (\alpha_i), \mu (\beta)) \neq 0$ for all $i \in \{1, \dots, \rho-2\}$, $i (\mu (\beta), \mu (\delta_k)) \neq 0$ for all $k \in \{j+1, \dots, n\}$, and $i (\mu(\epsilon_k), \gamma_{k-1}') \neq 0$ for all $k \in \{2, \dots, j\}$.
By Lemma \ref{lem3_13}, $f$ admits a representative $F: N \to N$ such that $F(H(a_1)) = H(a_1)$, $F(H(b)) = H(b)$, and $F(H(d_{j+1})) = H(d_{j+1})$.
Take a closed regular neighborhood $U$ of $H(a_1) \cup H(b) \cup H(d_{j+1})$ such that $H(c_j)$ is a boundary component of $U$.
Observe that there is a unique connected component $K$ of $\overline{N \setminus U}$ homeomorphic to $N_{1,j+1}$.
We have $\partial K \cap \partial U = H(c_j)$, hence $F(H(c_j))$ is isotopic to $H(c_j)$, that is, $f (\gamma_j') = \gamma_j'$.
Let $\AA=\{ \mu (\alpha_1), \dots, \mu(\alpha_{\rho-2}), \mu (\delta_{j+1}), \dots, \mu(\delta_n), \gamma_1', \dots, \gamma_{j-1}', \gamma_j' \}$.
Note that $\AA$ is a simplex of $\TT (N)$ and $f(\eta') = \eta'$ for all $\eta' \in \AA$.
Let $\Lambda_\AA : \MM_\AA (N) \to \MM (N_\AA)$ be the reduction homomorphism along $\AA$.
Every connected component of $N_\AA$ is homeomorphic to $S_{0,3}$ or $N_{1,2}$ hence, by Lemma \ref{lem2_10}\,(1), $\MM (N_\AA)$ is finite. 
It follows that there exists $v \in \Z \setminus \{ 0 \}$ such that $f^v \in Z_\AA = \Ker\,\Lambda_\AA$.
By the above the only element of $\AA$ which can be an essential reduction class for $f^v$ is $\gamma_j'$, hence, by Lemma \ref{lem2_5}, $f^v$ is a power of $t_{\gamma_j'}$.
We set $z_j = uv$.
Then there exists $z_j' \in \Z \setminus \{ 0 \}$ such that $\varphi (t_{\gamma_j}^{z_j}) = t_{\gamma_j'}^{z_j'}$.
\end{proof}

\begin{lem}\label{lem3_15}
Let $\gamma \in \TT_1 (N)$.
Assume that both connected components of $N_\gamma$ are (non-orientable) of genus $\ge 3$.
Then there exist $\gamma' \in \TT_1 (N)$ and $z,z' \in \Z \setminus \{0\}$ such that $t_\gamma^z \in \GG$ and $\varphi(t_\gamma^z) = t_{\gamma'}^{z'}$.
\end{lem}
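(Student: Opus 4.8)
The plan is to mimic the proof of Lemma~\ref{lem3_14}, replacing the configuration of Figure~\ref{fig3_6} by one adapted to a curve both of whose sides are non-orientable of genus $\ge 3$ (note that, since $\gamma\in\TT_1(N)$, the genus $\rho$ is necessarily even). Fix a representative $c$ of $\gamma$ and choose a suitable configuration of circles in $N$ containing $c$: it should include a simplex $\AA_0$ of $\TT(N)$ with $\gamma\in\AA_0$ such that cutting $N$ along $\AA_0$ yields only copies of $S_{0,3}$ and $N_{1,2}$, together with finitely many further circles representing classes of $\TT_0(N)$ and crossing some of the circles of $\AA_0$. Using that both components $N_1,N_2$ of $N_c$ are non-orientable of genus $\ge 3$, one arranges the configuration so that: (i) a triangle-free sub-chain $C$ of the configuration, lying in one of $N_1,N_2$, has a regular neighbourhood $U$ with $c$ among its boundary components, such that the component of $\overline{N\setminus U}$ carrying the genus-$\ge 3$ side $N_2$ is the unique component of $\overline{N\setminus U}$ of its homeomorphism type and meets $\partial U$ exactly in $c$; and (ii) every class of $\AA_0\setminus\{\gamma\}$ is crossed by some circle of the configuration, or by $\gamma$. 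By the results of Subsection~\ref{subsec3_2}, applied as in the first paragraph of the proof of Lemma~\ref{lem3_14} and, if necessary, with an induction over part of the configuration, there is a homeomorphism $H:N\to N$ with $[H(u)]=\mu([u])$ for every circle $u$ of the configuration. Put $\gamma'=[H(c)]$; as $H$ is a homeomorphism, $\gamma'\in\TT_1(N)$ and both components of $N_{\gamma'}$ are non-orientable of genus $\ge 3$.

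Now pick $u\in\Z\setminus\{0\}$ with $t_\gamma^u\in\GG$ and set $f=\varphi(t_\gamma^u)$. For each class $\alpha\in\AA_0\setminus\{\gamma\}$, represented by some $a\in\alpha$ disjoint from $c$, the element $t_\gamma^u$ commutes with a power of $t_\alpha$, so by Proposition~\ref{prop3_4} the element $f$ commutes with a non-trivial power of $t_{[H(a)]}$, hence, by the standard fact recalled in the proof of Lemma~\ref{lem3_14}, $f([H(a)])=[H(a)]$. By (ii) and super-injectivity of $\mu$, none of the classes $[H(a)]$ is an essential reduction class for $f$. By Lemma~\ref{lem3_13}, $f$ admits a representative $F:N\to N$ with $F(H(u))=H(u)$ for every circle $u$ of the sub-chain $C$; then $F(U)$ is isotopic to $U$, so $F$ fixes up to isotopy the distinguished component of $\overline{N\setminus U}$ from (i), hence fixes the isotopy class of its unique boundary circle lying on $\partial U$. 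That circle is $H(c)$, so $F(H(c))$ is isotopic to $H(c)$, i.e.\ $f(\gamma')=\gamma'$.

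Therefore $f$ fixes every element of the simplex $\AA=\{[H(a)]\mid a\in\AA_0\setminus\{\gamma\}\}\cup\{\gamma'\}$ of $\TT(N)$, and cutting $N$ along $\AA$ yields only copies of $S_{0,3}$ and $N_{1,2}$ (the image under $H$ of the decomposition along $\AA_0$), so $\MM(N_\AA)$ is finite by Lemma~\ref{lem2_10}\,(1). Hence $f^v\in Z_\AA=\Ker\,\Lambda_\AA$ for some $v\in\Z\setminus\{0\}$. By the previous paragraph together with Lemma~\ref{lem2_4}\,(1), the only element of $\AA$ that can be an essential reduction class for $f^v$ is $\gamma'$, so Lemma~\ref{lem2_5} forces $f^v$ to be a power of $t_{\gamma'}$. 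Setting $z=uv$ and using that $\varphi$ is injective and $t_\gamma$ has infinite order, we obtain $\varphi(t_\gamma^z)=t_{\gamma'}^{z'}$ for some $z'\in\Z\setminus\{0\}$, with $\gamma'\in\TT_1(N)$, as required.

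I expect the construction of the configuration in the first paragraph to be the main difficulty. It must simultaneously be rigid enough that Subsection~\ref{subsec3_2} pins down $\mu$ on each of its circles and Lemma~\ref{lem3_13} assembles them into a single homeomorphism $H$; contain a triangle-free sub-chain whose regular neighbourhood detects $c$ through a unique complementary piece; carry a decomposition of $N$ into copies of $S_{0,3}$ and $N_{1,2}$ with $\gamma$ among the decomposing curves; and have enough transverse intersections that no $[H(a)]$ becomes an essential reduction class for $f$. The feature distinguishing this case from Lemma~\ref{lem3_14} is that, both sides of $N_\gamma$ having genus $\ge 3$ rather than one side having genus $1$, the sub-chain must be threaded through the interior of one side while still detecting $c$; and, exactly as in the proof of Lemma~\ref{lem3_14}, realising $\mu$ by a single homeomorphism is likely to require an auxiliary induction over a part of the configuration.
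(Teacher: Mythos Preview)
Your proposal is correct and follows essentially the same strategy as the paper: build a configuration containing $c$, realise $\mu$ on it by a homeomorphism $H$ via the results of Subsection~\ref{subsec3_2}, use commutation with Dehn-twist powers and Lemma~\ref{lem3_13} to show $f(\gamma')=\gamma'$ via the regular-neighbourhood argument, and then reduce into $Z_\AA$. The paper's explicit configuration (Figure~\ref{fig3_7}) places two separate crossing chains $b_1$ and $b_3$, one on each side of $c$, so that every class of $\AA_0\setminus\{\gamma\}$ is hit by one of them and $H$ is obtained in a single step from Subsection~\ref{subsec3_2}; the auxiliary induction you anticipate is therefore not needed here.
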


\begin{proof}
Consider the configuration of circles in $N$ drawn in Figure \ref{fig3_7}.
We can and do choose this configuration so that $\gamma=[c]$.
We set $\alpha_i = [a_i]$ for $i \in \{1, \dots, \rho-2\}$, $\beta_i = [b_i]$ for $i \in \{1,2,3\}$, and $\delta_j = [d_j]$ for $j \in \{1, \dots, n\}$. 
By the results of Subsection \ref{subsec3_2} there exists a homeomorphism $H : N \to N$ such that $[H(u)] = \mu ([u])$ for all $u \in \{ a_1, \dots, a_{\rho-2}, b_1, b_2, b_3, d_1, \dots, d_{n} \}$.
We set $\gamma' = [H(c)]$ and we turn to show that there exist $z,z' \in \Z \setminus \{ 0 \}$ such that $t_{\gamma}^{z} \in \GG$ and $\varphi (t_{\gamma}^{z}) = t_{\gamma'}^{z'}$.
Let $u \in \Z \setminus \{ 0 \}$ such that $t_{\gamma}^u \in \GG$.
Set $f = \varphi(t_{\gamma}^u)$.
The element $t_{\gamma}^u$ commutes with every power of $t_\eta$ for all $\eta \in \CC= \{ \alpha_1, \dots, \alpha_{\rho-2}, \beta_1, \beta_3, \delta_1, \dots, \delta_{n}\}$.
By Proposition \ref{prop3_4} it follows that $f$ commutes with a non-trivial power of $t_{\eta'}$ for all $\eta' \in \mu (\CC)$, hence $f(\eta') = \eta'$ for all $\eta' \in \mu(\CC)$.
In particular each element of $\mu(\CC)$ is a reduction class for $f$.
None of them is an essential reduction class since $i (\mu (\alpha_i), \mu (\beta_1)) \neq 0$ for all $i \in \{1, \dots, p\}$, $i( \mu (\alpha_i), \mu (\beta_3)) \neq 0$ for all $i \in \{p+1, \dots, \rho-2\}$, $i (\mu (\delta_j), \mu (\beta_1)) \neq 0$ for all $j \in \{1, \dots, q\}$, and $i (\mu (\delta_j), \mu (\beta_3)) \neq 0$ for all $j \in \{q+1, \dots, n\}$. 
By Lemma \ref{lem3_13}, $f$ admits a representative $F: N \to N$ such that $F(H(a_i)) = H(a_i)$ for all $i \in \{1, \dots, p\}$, $F(H(b_1)) = H(b_1)$, and $F(H(d_j)) = H(d_j)$ for all $j \in \{1, \dots, q\}$. 
Take a closed regular neighborhood $U$ of $H(a_1) \cup \cdots \cup H(a_p)  \cup H(b_1) \cup H(d_1) \cup \cdots \cup H(d_q)$ such that $H(c)$ is a boundary component of $U$.
Observe that there is a unique connected component $K$ of $\overline{N \setminus U}$ non-orientable of genus $\ge 3$. 
We have $\partial K \cap \partial U = H(c)$, hence $F(H(c))$ is isotopic to $H(c)$, that is, $f (\gamma') = \gamma'$.
Let $\AA=\{ \mu (\alpha_1), \dots, \mu(\alpha_{\rho-2}), \mu (\delta_1), \dots, \mu(\delta_{n}), \gamma' \}$.
Note that $\AA$ is a simplex of $\TT (N)$ and $f(\eta') = \eta'$ for all $\eta' \in \AA$.
Let $\Lambda_\AA : \MM_\AA (N) \to \MM (N_\AA)$ be the reduction homomorphism along $\AA$.
Every connected component of $N_\AA$ is homeomorphic to $S_{0,3}$ or $N_{1,2}$ hence, by Lemma \ref{lem2_10}\,(1), $\MM (N_\AA)$ is finite.
It follows that there exists $v \in \Z \setminus \{ 0 \}$ such that $f^v \in Z_\AA = \Ker\,\Lambda_\AA$.
By the above the only element of $\AA$ which can be an essential reduction class for $f^v$ is $\gamma'$, hence, by Lemma \ref{lem2_5}, $f^v$ is a power of $t_{\gamma'}$.
We set $z = uv$.
Then there exists $z' \in \Z \setminus \{ 0 \}$ such that $\varphi (t_{\gamma}^{z}) = t_{\gamma'}^{z'}$.
\end{proof}

\begin{figure}[ht!]
\begin{center}
\includegraphics[width=7.2cm]{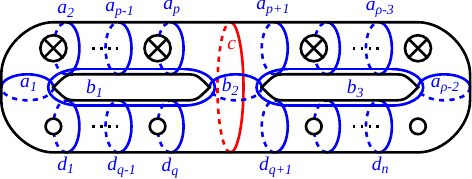}

\caption{Circles in $N$ (Lemma \ref{lem3_15})}\label{fig3_7}
\end{center}
\end{figure}

\begin{lem}\label{lem3_16}
Let $\gamma \in \TT_2 (N)$ and let $\gamma_0$ be the interior class of $\gamma$.
Then there exist $\gamma' \in \TT_2 (N)$, $z,z' \in \Z \setminus \{0\}$ and $z_0' \in \Z$ such that $\mu (\gamma_0)$ is the interior class of $\gamma'$,  $t_\gamma^z \in \GG$, and $\varphi(t_\gamma^z) = t_{\gamma'}^{z'} t_{\mu(\gamma_0)}^{z_0'}$.
\end{lem}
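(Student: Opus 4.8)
The plan is to imitate the proofs of Lemmas \ref{lem3_14} and \ref{lem3_15}, with the modifications forced by the geometry of $\TT_2(N)$. First I record the topology: if $\gamma$ bounds a Klein bottle, then $N_\gamma$ has two components, a one-holed Klein bottle $N'\cong N_{2,1}$ with $\TT(N')=\{\gamma_0\}$, and a surface $N_2\cong N_{\rho-2,n+1}$ which is non-orientable of genus $\rho-2\ge 3$. A mod-$2$ homology computation in $N'$ shows that $\gamma_0$ is non-separating in $N'$, hence in $N$, and that $N_{\gamma_0}$ is non-orientable; consequently $\gamma_0\in\TT_0(N)$, so $\mu(\gamma_0)$ is defined and Proposition \ref{prop3_4} applies to $t_{\gamma_0}$.

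Next I would fix a configuration of circles in $N$, analogous to those of Figures \ref{fig3_6} and \ref{fig3_7}, consisting of a representative $c$ of $\gamma$, a representative $c_0$ of $\gamma_0$ contained in $N'$, a pants decomposition $\{a_1,\dots,a_m\}$ of $N_2$ (so that cutting $N$ along $\{[a_1],\dots,[a_m],\gamma\}$ yields $N'$ together with pairs of pants and at most one copy of $N_{1,2}$, and so that $\gamma$ is a boundary component of a regular neighborhood of suitable $a_i$'s), and auxiliary circles meeting the $a_i$'s which serve, exactly as in Lemmas \ref{lem3_14} and \ref{lem3_15}, to witness that the $\mu([a_i])$ are not essential reduction classes. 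Using the results of Subsection \ref{subsec3_2} (Lemmas \ref{lem3_9}, \ref{lem3_11}, \ref{lem3_12} on $2$-simpants, simpskirts and bounding tori, together with Lemma \ref{lem3_13}) one obtains a homeomorphism $H\colon N\to N$ with $[H(u)]=\mu([u])$ for every $\TT_0$-class $u$ in the configuration, in particular for $\gamma_0$ and all the $[a_i]$. Setting $\gamma'=[H(c)]$, the surface $H(N')$ is a one-holed Klein bottle bounded by $H(c)$ with interior curve $H(c_0)$, so $\gamma'\in\TT_2(N)$ and $\mu(\gamma_0)$ is the interior class of $\gamma'$; this already disposes of the first assertions of the lemma.

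Then I would pick $u\in\Z\setminus\{0\}$ with $t_\gamma^u\in\GG$ and set $f=\varphi(t_\gamma^u)$. Since $t_\gamma^u$ commutes with a non-trivial power of $t_\eta$ for every $\TT_0$-class $\eta$ disjoint from $\gamma$ --- in particular for $\gamma_0$ and for $[a_1],\dots,[a_m]$ --- Proposition \ref{prop3_4} shows $f$ commutes with a non-trivial power of $t_{\mu(\eta)}$, hence $f(\mu(\eta))=\mu(\eta)$; none of the $\mu([a_i])$ is an essential reduction class for $f$, because each meets non-trivially some other class fixed by $f$, whereas $\mu(\gamma_0)$ may be one, which is why $z_0'$ need not vanish. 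By Lemma \ref{lem3_13} applied to a triangle-free subcollection of these fixed circles, $f$ has a representative $F$ fixing the relevant circles $H(a_i)$; taking a closed regular neighborhood $U$ of their union with $H(c)$ as a boundary component, the surface $\overline{N\setminus U}$ has $H(N')$ as its unique connected component homeomorphic to $N_{2,1}$ and $\partial\bigl(H(N')\bigr)\cap\partial U=H(c)$, so $F$ fixes $H(N')$ setwise and $F(H(c))$ is isotopic to $H(c)$, i.e. $f(\gamma')=\gamma'$. Now let $\AA=\{\mu([a_1]),\dots,\mu([a_m]),\gamma'\}$, a simplex of $\TT(N)$ for which $N_\AA$ is $H(N')$ together with pairs of pants and at most one $N_{1,2}$; by Lemma \ref{lem2_10}, $\MM(N_\AA)\cong(\Z\rtimes\Z/2\Z)\times(\text{finite})$ with the $\Z$-factor generated by the reduction of $t_{\mu(\gamma_0)}$. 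As $f$ fixes every element of $\AA$ it lies in $\MM_\AA(N)$; choose $v$ with $\Lambda_\AA(f^v)=\Lambda_\AA(t_{\mu(\gamma_0)}^{z_0'})$ for some $z_0'\in\Z$, so $g:=f^v t_{\mu(\gamma_0)}^{-z_0'}\in Z_\AA=\Ker\Lambda_\AA$. Since $\{\mu(\gamma_0)\}\cup\AA$ is a simplex, Lemma \ref{lem2_5} applied to $f^v=t_{\mu(\gamma_0)}^{z_0'}g$ computes $\SS(f^v)$ as the classes among $\{\mu(\gamma_0)\}\cup\AA$ occurring with non-zero exponent; but $\SS(f^v)=\SS(f)$ by Lemma \ref{lem2_4} and no $\mu([a_i])$ lies in $\SS(f)$, so those exponents vanish and $f^v=t_{\gamma'}^{z'}t_{\mu(\gamma_0)}^{z_0'}$ for some $z'\in\Z$. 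Putting $z=uv$ and comparing with $\varphi(t_{\gamma_0}^w)$, a power of $t_{\mu(\gamma_0)}$ by Proposition \ref{prop3_4}, the injectivity of $\varphi$ forces $z'\neq 0$, and $\varphi(t_\gamma^z)=t_{\gamma'}^{z'}t_{\mu(\gamma_0)}^{z_0'}$ as required.

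The main obstacle, as in Lemmas \ref{lem3_14} and \ref{lem3_15}, is topological rather than algebraic: designing the configuration of circles so that the combinatorial results of Subsection \ref{subsec3_2} apply verbatim to realize $\mu$ geometrically on every curve involved, now including the interior curve $\gamma_0$, and arranging $U$ so that its unique one-holed Klein bottle complementary component is pinned down by its homeomorphism type with $\partial K\cap\partial U$ reduced to the single circle $H(c)$. The genuinely new point, responsible for the extra factor $t_{\mu(\gamma_0)}^{z_0'}$, is that the cut surface $N_\AA$ now carries a copy of $\MM(N_{2,1})\cong\Z\rtimes\Z/2\Z$ rather than only finite groups, so the reduction argument produces a virtually cyclic, not finite, ambiguity.
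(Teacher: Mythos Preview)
Your proposal is correct and follows essentially the same strategy as the paper: build a configuration of circles containing $c$ and $c_0$, realize $\mu$ geometrically via a homeomorphism $H$ using the results of Subsection~\ref{subsec3_2}, set $\gamma'=[H(c)]$, show $f=\varphi(t_\gamma^u)$ fixes $\gamma'$ via a regular-neighborhood argument, and then reduce along a simplex $\AA$ to pin down $f^v$ as a product of Dehn twists.

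The one substantive difference is your choice of $\AA$.  You take $\AA=\{\mu([a_1]),\dots,\mu([a_m]),\gamma'\}$ \emph{without} $\mu(\gamma_0)$, so that $N_\AA$ retains a copy of $N_{2,1}$ and $\MM(N_\AA)$ is virtually cyclic rather than finite; you then have to argue that for some $v$ the reduction $\Lambda_\AA(f^v)$ lands in the infinite cyclic subgroup generated by $\Lambda_\AA(t_{\mu(\gamma_0)})$, and factor accordingly.  The paper instead puts $\gamma_0'=\mu(\gamma_0)$ into $\AA$ from the start, so every component of $N_\AA$ is $S_{0,3}$ or $N_{1,2}$, $\MM(N_\AA)$ is finite, and some power $f^v$ lies directly in $Z_\AA$; then Lemma~\ref{lem2_5} immediately yields $f^v\in\langle t_{\gamma'},t_{\gamma_0'}\rangle$ since $\gamma'$ and $\gamma_0'$ are the only classes in $\AA$ that can be essential reduction classes for $f$.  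Both routes are valid, but the paper's is shorter and avoids handling the infinite factor by hand.  (The paper also establishes $\mu(\gamma_0)=[H(c_0)]$ by a uniqueness argument rather than by including $c_0$ in the configuration fed to Subsection~\ref{subsec3_2}, but this is a cosmetic difference.)
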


\begin{proof}
We suppose that the connected component of $N_\gamma$ non-homeomorphic to $N_{2,1}$ is non-orientable.
The case where this connected component is orientable can be proved in a similar way. 
We consider the configuration of circles in $N$ drawn in Figure \ref{fig3_8}.
We can and do choose this configuration so that $\gamma=[c]$.
We set $\alpha_i = [a_i]$ for $i \in \{1, \dots, \rho-1\}$, $\beta = [b]$, $\gamma_0 = [c_0]$, and $\delta_j = [d_j]$ for $j \in \{1, \dots, n-1\}$. 
By the results of Subsection \ref{subsec3_2}, there exists a homeomorphism $H : N \to N$ such that $[H(u)] = \mu ([u])$ for all $u \in \{ a_1, \dots, a_{\rho-1}, b, d_1, \dots, d_{n-1} \}$.
We set $\gamma' = [H(c)]$.
Note that $\gamma' \in \TT_2 (N)$ and $\gamma_0' = [H(c_0)]$ is the interior class of $\gamma'$.
It is easily seen that $\gamma_0'$ is the unique element of $\TT_0(N)$ that satisfies $i(\gamma_0', \eta) = 0$ for all $\eta \in \{ \mu (\alpha_1), \mu (\alpha_3), \dots, \mu(\alpha_{\rho-1}), \mu(\beta), \mu (\delta_1), \dots, \mu (\delta_{n-1})\}$.
Since $\mu(\gamma_0)$ also satisfies this property, we have $\mu (\gamma_0) = \gamma_0'$.
Let $u \in \Z \setminus \{ 0 \}$ such that $t_{\gamma}^u \in \GG$.
We set $f = \varphi(t_{\gamma}^u)$.
The element $t_{\gamma}^u$ commutes with every power of $t_\eta$ for all $\eta \in \CC= \{ \alpha_1, \alpha_3, \dots, \alpha_{\rho-1}, \beta, \delta_1, \dots, \delta_{n-1}\}$.
By Proposition \ref{prop3_4} it follows that $f$ commutes with a non-trivial power of $t_{\eta'}$ for all $\eta' \in \mu (\CC)$, hence $f(\eta') = \eta'$ for all $\eta' \in \mu(\CC)$.
In particular every element of $\mu(\CC)$ is a reduction class for $f$.
None of them is an essential reduction class since $i (\mu (\alpha_i), \mu (\beta)) \neq 0$ for all $i \in \{1, 3, \dots, \rho-1\}$ and $i (\mu (\delta_j), \mu (\beta)) \neq 0$ for all $j \in \{1, \dots, n-1\}$.
We have $f(\gamma_0') = \gamma_0'$ for the same reason (but we do not know if $\gamma_0'$ is an essential reduction class for $f$).
By Lemma \ref{lem3_13}, $f$ admits a representative $F: N \to N$ such that $F(H(a_i)) = H(a_i)$ for all $i \in \{1,3, \dots, \rho-1\}$, $F(H(b)) = H(b)$, and $F(H(d_j)) = H(d_j)$ for all $j \in \{1, \dots, n-1\}$. 
Take a closed regular neighborhood $U$ of $H(a_1) \cup H(a_3) \cup \cdots \cup H(a_{\rho-1}) \cup H(b) \cup H(d_1) \cup \cdots \cup H(d_{n-1})$ such that $H(c)$ is a boundary component of $U$.
Observe that there is a unique connected component $K$ of $\overline{N \setminus U}$ homeomorphic to $N_{2,1}$, and the boundary component of $K$ is $H(c)$.
So, $F(H(c))$ is isotopic to $H(c)$, that is, $f (\gamma') = \gamma'$.
Let $\AA=\{ \mu (\alpha_1), \mu (\alpha_3), \dots, \mu(\alpha_{\rho-1}), \mu (\delta_1), \dots, \mu(\delta_{n-1}), \gamma_0', \gamma' \}$.
Note that $\AA$ is a simplex of $\TT (N)$ and $f(\eta') = \eta'$ for all $\eta' \in \AA$.
Let $\Lambda_\AA : \MM_\AA (N) \to \MM (N_\AA)$ be the reduction homomorphism along $\AA$.
All the connected components of $N_\AA$ are homeomorphic to $S_{0,3}$ or $N_{1,2}$ hence, by Lemma \ref{lem2_10}\,(1), $\MM (N_\AA)$ is finite. 
It follows that there exists $v \in \Z \setminus \{ 0 \}$ such that $f^v \in Z_\AA = \Ker\,\Lambda_\AA$.
By the above the only elements of $\AA$ that can be essential reduction classes for $f^v$ are $\gamma'$ and $\gamma_0'$, hence, by Lemma \ref{lem2_5}, $f^v \in \langle t_{\gamma'}, t_{\gamma_0'} \rangle$. 
We set $z = uv$.
Then there exist $z', z_0' \in \Z$ such that $\varphi (t_{\gamma}^{z}) = t_{\gamma'}^{z'} t_{\gamma_0'}^{z_0'}$.

\begin{figure}[ht!]
\begin{center}
\bigskip
\includegraphics[width=5cm]{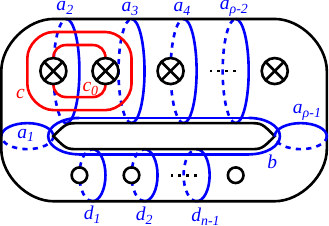}

\caption{Circles in $N$ (Lemma \ref{lem3_16})}\label{fig3_8}
\end{center}
\end{figure}

It remains to show that $z' \neq 0$.
Suppose that $z' =0$.
Then $z_0' \neq 0$ since $t_{\gamma}^z \neq 1$.
By Proposition \ref{prop3_4} there exist $y,y' \in \Z \setminus \{ 0 \}$ such that $t_{\gamma_0}^y \in \GG$ and $\varphi (t_{\gamma_0}^y) = t_{\gamma_0'}^{y'}$.
We have $\varphi (t_\gamma^{zy'}) = t_{\gamma_0'}^{z_0'y'} = \varphi (t_{\gamma_0}^{z_0'y})$, hence $t_\gamma^{zy'} = t_{\gamma_0}^{z_0'y}$, and therefore $\gamma = \gamma_0$: contradiction.
So, $z' \neq 0$.
\end{proof}

\begin{proof}[Proof of Theorem \ref{thm3_1}]
Let $\alpha \in \TT (N)$.
If $\alpha \in \TT_0 (N)$, then we set $\lambda (\alpha) = \mu (\alpha)$.
Suppose that $\alpha \in \TT_1 (N)$.
By Lemma \ref{lem3_14} and Lemma \ref{lem3_15} there exist $\alpha' \in \TT_1 (N)$ and $x,x' \in \Z \setminus \{ 0 \}$ such that $t_\alpha^x \in \GG$ and $\varphi( t_\alpha^x) = t_{\alpha'}^{x'}$.
We choose such a $\alpha'$ and we set $\lambda(\alpha) = \alpha'$.
Suppose that $\alpha \in \TT_2 (N)$.
Let $\alpha_0$ be the interior class of $\alpha$.
By Lemma \ref{lem3_16} there exist $\alpha' \in \TT_2 (N)$, $x,x' \in \Z \setminus \{0\}$, and $y' \in \Z$, such that $t_\alpha^x \in \GG$, $\mu(\alpha_0)$ is the interior class of $\alpha'$, and $\varphi (t_\alpha^x) = t_{\alpha'}^{x'} t_{\mu (\alpha_0)}^{y'}$.
We choose such a $\alpha'$ and we set $\lambda (\alpha) = \alpha'$.

We need to prove that $\lambda : \TT (N) \to \TT (N)$ is a super-injective simplicial map. 
To do that we take two distinct classes $\alpha, \beta \in \TT  (N)$ and we show that $i(\alpha, \beta) = 0$ if and only if $i( \lambda (\alpha), \lambda (\beta)) = 0$.
It is easily shown using the same arguments as in the proof of Proposition \ref{prop3_4} that this is true if $\alpha, \beta \in \TT_0 (N) \cup \TT_1 (N)$.  
To prove the other cases we will use the following claims.

{\it Claim 1.}
Let $\gamma \in \TT_2 (N)$, let $\gamma_0$ be the interior class of $\gamma$, and let $\delta \in \TT (N)$.
If $i(\delta, \gamma) = 0$, then $i (\delta, \gamma_0) = 0$.

{\it Proof of Claim 1.}
Suppose that $i(\delta, \gamma)=0$ and $i(\delta, \gamma_0) \neq 0$.
Let $N_1$ be the connected component of $N_\gamma$ homeomorphic to $N_{2,1}$.
Since $i(\delta, \gamma)=0$ and $i(\delta, \gamma_0) \neq 0$, we should have $\delta \in \TT (N_1) = \{ \gamma_0 \}$, hence $\delta = \gamma_0$, and therefore $i(\delta, \gamma_0)=0$: contradiction.
This proves Claim 1.

{\it Claim 2.}
Let $\gamma \in \TT_2 (N)$ and let $\gamma_0$ be the interior class of $\gamma$.
There exist $z,z' \in \Z \setminus \{0\}$ and $q \in \Z$ such that $t_\gamma^z t_{\gamma_0}^q\in \GG$ and $\varphi(t_\gamma^z t_{\gamma_0}^q) = t_{\lambda(\gamma)}^{z'}$.

{\it Proof of Claim 2.}
Let $x,x' \in \Z \setminus \{ 0 \}$ and $p' \in \Z$ such that $\varphi (t_\gamma^x) = t_{\lambda(\gamma)}^{x'} t_{\lambda(\gamma_0)}^{p'}$.
Let $y,y' \in \Z \setminus \{ 0 \}$ such that $t_{\gamma_0}^y \in \GG$ and $\varphi (t_{\gamma_0}^y) = t_{\lambda (\gamma_0)}^{y'}$.
Then $t_{\gamma}^{xy'} t_{\gamma_0}^{-p' y} \in \GG$ and $\varphi (t_{\gamma}^{xy'} t_{\gamma_0}^{-p' y}) = t_{\lambda (\gamma)}^{x'y'}$.
This proves Claim 2. 

Suppose that $\alpha \in \TT_0 (N) \cup  \TT_1 (N)$ and $\beta \in \TT_2 (N)$.
Let $\beta_0$ be the interior class of $\beta$.
We choose $x,x',y,y',z,z' \in \Z \setminus \{ 0 \}$ and $p',q \in \Z$ such that $t_\alpha^x, t_\beta^y, t_\beta^z t_{\beta_0}^q \in \GG$, $\varphi( t_\alpha^x) = t_{\lambda (\alpha)}^{x'}$,  $\varphi (t_\beta^y) = t_{\lambda (\beta)}^{y'} t_{\lambda (\beta_0)}^{p'}$, and $\varphi (t_\beta^z t_{\beta_0}^q) = t_{\lambda (\beta)}^{z'}$.
Suppose that $i( \lambda (\alpha), \lambda (\beta)) = 0$.
By Claim 1 we have $i(\lambda(\alpha), \lambda (\beta_0)) = 0$.
Then $\varphi (t_\alpha^x) = t_{\lambda (\alpha)}^{x'}$ and $\varphi( t_\beta^y) = t_{\lambda(\beta)}^{y'} t_{\lambda (\beta_0)}^{p'}$ commute, hence $t_\alpha^x$ and $t_\beta^y$ commute, and therefore $i(\alpha, \beta)=0$.
Suppose that $i(\alpha, \beta) = 0$.
By Claim 1 we have $i(\alpha, \beta_0) = 0$.
Then $t_\alpha^x$ and $t_\beta^z t_{\beta_0}^q$ commute, hence $\varphi (t_\alpha^x) = t_{\lambda (\alpha)}^{x'}$ and $\varphi (t_\beta^z t_{\beta_0}^q) = t_{\lambda (\beta)}^{z'}$ commute, and therefore $i(\lambda (\alpha), \lambda (\beta))=0$.

Suppose that $\alpha, \beta \in \TT_2 (N)$.
Let $\alpha_0$ be the interior class of $\alpha$ and let $\beta_0$ be the interior class of $\beta$.
Let $x,x',y,y',z,z',u,u' \in \Z \setminus \{ 0 \}$ and $p',q,r',s \in \Z$ such that $t_\alpha^x, t_\alpha^y t_{\alpha_0}^q, t_\beta^z, t_\beta^u t_{\beta_0}^s \in \GG$, $\varphi (t_\alpha^x) = t_{\lambda (\alpha)}^{x'} t_{\lambda (\alpha_0)}^{p'}$, $\varphi (t_\alpha^y t_{\alpha_0}^q) = t_{\lambda (\alpha)}^{y'}$, $\varphi (t_\beta^z) = t_{\lambda (\beta)}^{z'} t_{\lambda (\beta_0)}^{r'}$, and $\varphi (t_\beta^u t_{\beta_0}^s) = t_{\lambda (\beta)}^{u'}$.
Suppose that $i( \lambda (\alpha), \lambda (\beta)) = 0$.
By Claim 1 we have $i (\lambda (\alpha_0), \lambda (\beta)) = i (\lambda (\alpha), \lambda (\beta_0)) = 0$, hence, again by Claim 1, $i (\lambda (\alpha_0), \lambda (\beta_0))=0$.
Then $\varphi (t_\alpha^x) = t_{\lambda (\alpha)}^{x'} t_{\lambda (\alpha_0)}^{p'}$ and $\varphi (t_\beta^z) = t_{\lambda (\beta)}^{z'} t_{\lambda (\beta_0)}^{r'}$ commute, hence $t_\alpha^x$ and $t_\beta^z$ commute, and therefore $i (\alpha, \beta)=0$.
Suppose that $i (\alpha, \beta) = 0$.
By Claim 1 we have $i(\alpha_0, \beta) = i(\alpha, \beta_0)=0$, hence, again by Claim 1, $i (\alpha_0, \beta_0) =0$.
Then $t_\alpha^y t_{\alpha_0}^q$ and $t_\beta^u t_{\beta_0}^s$ commute, hence $t_{\lambda (\alpha)}^{y'} = \varphi (t_{\alpha}^y t_{\alpha_0}^q)$ and $t_{\lambda (\beta)}^{u'} = \varphi (t_\beta^u t_{\beta_0}^s)$ commute, and therefore $i (\lambda (\alpha), \lambda (\beta)) = 0$.
\end{proof}


\section{Proofs of Theorem \ref{thm1_1} and Corollary \ref{corl1_3}}\label{Sec4}

\begin{proof}[Proof of Theorem \ref{thm1_1}]
Let $\GG$ be a finite index subgroup of $\MM (N)$ and let $\varphi : \GG \to \MM (N)$ be an injective homomorphism. 
We denote by $\lambda : \TT (N) \to \TT (N)$ the super-injective simplicial map induced by $\varphi$ (see Theorem \ref{thm3_1}).
We know by Irmak--Paris \cite[Theorem 1.1]{IrmPar1} that there exists $f_0 \in \MM (N)$ such that $\lambda (\alpha) = f_0 (\alpha)$ for all $\alpha \in \TT (N)$.
We turn to show that $\varphi (g) = f_0 g f_0^{-1}$ for all $g \in \GG$.

We say that a collection $\CC \subset \TT (N)$ has \emph{trivial stabilizer} if the only element $f$ in $\MM (N)$ that satisfies $f(\alpha) = \alpha$ for all $\alpha \in \CC$ is $f=\id$.
It is shown in Irmak--Paris \cite[Lemma 3.6, Lemma 3.7]{IrmPar1} that there exists a finite collection $\CC \subset \TT_0 (N)$ which has trivial stabilizer. 
Take such a collection $\CC$.
Let $g \in \GG$.
Let $\alpha \in \CC$.
Since $\alpha, g(\alpha) \in \TT_0 (N)$, by Theorem \ref{thm3_1}, there exist $x,x',y,y' \in \Z \setminus \{ 0 \}$ such that $t_\alpha^x, t_{g(\alpha)}^y \in \GG$, $\varphi (t_\alpha^x) = t_{\lambda (\alpha)}^{x'}$, and $\varphi (t_{g(\alpha)}^y) = t_{\lambda (g(\alpha))}^{y'}$.
We have 
\begin{gather*}
t_{(\varphi(g)f_0)(\alpha)}^{x'y} 
= \varphi(g) t_{f_0 (\alpha)}^{x'y} \varphi(g)^{-1} 
= \varphi(g) t_{\lambda (\alpha)}^{x'y} \varphi(g)^{-1} 
= \varphi(g) \varphi (t_\alpha^{xy}) \varphi(g)^{-1}\\
= \varphi(g t_\alpha^{xy}g^{-1})
= \varphi(t_{g(\alpha)}^{xy})
= t_{\lambda(g(\alpha))}^{xy'}
= t_{(f_0g)(\alpha)}^{xy'}\,.
\end{gather*}
So, $(\varphi(g)f_0)(\alpha) = (f_0g)(\alpha)$, that is, $(g^{-1} f_0^{-1} \varphi(g) f_0)(\alpha) = \alpha$.
Since $\CC$ has trivial stabilizer, it follows that $g^{-1} f_0^{-1} \varphi (g) f_0 = \id$, hence $\varphi (g) = f_0 g f_0^{-1}$.
\end{proof}

\begin{proof}[Proof of Corollary \ref{corl1_3}]
For $f \in \MM (N)$ we denote by $c_f : \MM (N) \to \MM (N)$, $g \mapsto fgf^{-1}$, the conjugation by $f$. 
We have an obvious homomorphism $\Psi: \MM (N) \to \Com (\MM (N))$ which sends $f$ to the class of $(\MM(N), \MM(N),c_f)$.
Theorem \ref{thm1_1} clearly implies that $\Psi$ is surjective, and the following lemma implies that $\Psi$ is injective. 
\end{proof}

\begin{lem}\label{lem4_1}
Let $\GG$ be a finite index subgroup of $\MM (N)$.
Then the centralizer of $\GG$ in $\MM (N)$ is trivial.
\end{lem}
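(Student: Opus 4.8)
The plan is to show that if $f \in \MM(N)$ commutes with every element of a finite index subgroup $\GG$, then $f = \id$. First I would reduce to a statement about fixed curves: since $\GG$ has finite index, for every $\alpha \in \TT(N)$ there exists $k \in \Z \setminus \{0\}$ with $t_\alpha^k \in \GG$, and then $f t_\alpha^k f^{-1} = t_\alpha^k$, i.e.\ $t_{f(\alpha)}^{\pm k} = t_\alpha^k$. By the result of Stukow used in the proof of Proposition \ref{prop3_4} (that $t_\beta^m = t_\gamma^{m'}$ forces $\beta = \gamma$), this gives $f(\alpha) = \alpha$ for all $\alpha \in \TT(N)$. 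So $f$ fixes every isotopy class of two-sided circle.

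Next I would invoke the existence of a finite collection $\CC \subset \TT_0(N) \subset \TT(N)$ with trivial stabilizer, which is cited in the proof of Theorem \ref{thm1_1} from Irmak--Paris \cite[Lemma 3.6, Lemma 3.7]{IrmPar1}: the only $f \in \MM(N)$ with $f(\alpha) = \alpha$ for all $\alpha \in \CC$ is $f = \id$. Since we have just shown $f$ fixes all of $\TT(N)$, in particular it fixes $\CC$, hence $f = \id$. This shows the centralizer of $\GG$ in $\MM(N)$ is trivial.

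The only subtlety — and the one step I would be careful about — is the passage from ``$f$ commutes with $t_\alpha^k$'' to ``$f(\alpha) = \alpha$'': one must know that a power of a Dehn twist determines its defining curve, and that the sign ambiguity in $t_\alpha$ (coming from the choice of local orientation) is harmless. Both are covered by Stukow's results already cited in the paper, so there is no real obstacle; the argument is short. Here is the writeup.

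\begin{proof}
Let $f \in \MM(N)$ be an element of the centralizer of $\GG$ in $\MM(N)$.
Let $\alpha \in \TT(N)$.
Since $\GG$ is of finite index in $\MM(N)$, there exists $k \in \Z \setminus \{0\}$ such that $t_\alpha^k \in \GG$, hence $f t_\alpha^k f^{-1} = t_\alpha^k$.
On the other hand $f t_\alpha^k f^{-1} = t_{f(\alpha)}^{\pm k}$, so $t_{f(\alpha)}^{\pm k} = t_\alpha^k$.
By Stukow \cite{Stuko1} this implies $f(\alpha) = \alpha$.
So $f(\alpha) = \alpha$ for all $\alpha \in \TT(N)$.
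By Irmak--Paris \cite[Lemma 3.6, Lemma 3.7]{IrmPar1} there exists a finite collection $\CC \subset \TT_0(N)$ which has trivial stabilizer, that is, the only element $g \in \MM(N)$ satisfying $g(\alpha) = \alpha$ for all $\alpha \in \CC$ is $g = \id$.
Since $\CC \subset \TT(N)$, we have $f(\alpha) = \alpha$ for all $\alpha \in \CC$, hence $f = \id$.
\end{proof}
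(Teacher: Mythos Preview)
Your proof is correct and follows essentially the same approach as the paper's: both use that $f$ commuting with $t_\alpha^k \in \GG$ forces $f(\alpha)=\alpha$, and then conclude via the finite collection $\CC \subset \TT_0(N)$ with trivial stabilizer from \cite{IrmPar1}. The only cosmetic difference is that the paper applies the argument directly to $\alpha \in \CC$ rather than first to all $\alpha \in \TT(N)$, and it writes $g t_\alpha^x g^{-1} = t_{g(\alpha)}^x$ without the $\pm$; your slightly more careful handling of the sign is harmless and leads to the same conclusion.
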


\begin{proof}
As in the proof of Theorem \ref{thm1_1}, we may choose a finite collection $\CC \subset \TT_0 (N)$ having trivial stabilizer. 
Let $g$ be an element in the centralizer of $\GG$.
Let $\alpha \in \CC$.
Take $x \in \Z \setminus \{ 0 \}$ such that $t_\alpha^x \in \GG$.
Then $t_\alpha^x = g t_\alpha^x g^{-1} = t_{g(\alpha)}^x$, hence $g(\alpha) = \alpha$.
Since $\CC$ has trivial stabilizer, we conclude that $g = \id$.
\end{proof}




\begin{thebibliography}

\bibitem{Atala1}
{\bf F Atalan}, 
{\it Outer automorphisms of mapping class groups of nonorientable surfaces},
Internat. J. Algebra Comput. 20 (2010), no. 3, 437--456.

\bibitem{Atala2}
{\bf F Atalan},
{\it An algebraic characterization of a Dehn twist for nonorientable surfaces},
Preprint, arXiv:1501.07183. 

\bibitem{AtaSze1}
{\bf F Atalan, B Szepietowski,}
{\it Automorphisms of the mapping class group of a nonorientable surface,}
Geom. Dedicata 189 (2017), 39--57.

\bibitem{BehMar1}
{\bf J Behrstock, D Margalit,}
{\it Curve complexes and finite index subgroups of mapping class groups,}
Geom. Dedicata 118 (2006), 71--85. 

\bibitem{BelMar1}
{\bf R\,W Bell, D Margalit,}
{\it Injections of Artin groups,}
Comment. Math. Helv. 82 (2007), no. 4, 725--751. 

\bibitem{BiLuMc1}
{\bf J\,S Birman, A Lubotzky, J McCarthy,}
{\it Abelian and solvable subgroups of the mapping class groups,}
Duke Math. J. 50 (1983), no. 4, 1107--1120.

\bibitem{Caste1}
{\bf F Castel,}
{\it Geometric representations of the braid groups,} 
Astérisque No. 378 (2016).

\bibitem{FaLaPo1}
{\bf A Fathi, F Laudenbach, V Poénaru,}
{\it Travaux de Thurston sur les surfaces,}
Séminaire Orsay. Astérisque, 66--67, Société Mathématique de France, Paris, 1979.

\bibitem{Irmak2}
{\bf E Irmak,}
{\it Superinjective simplicial maps of complexes of curves and injective homomorphisms of subgroups of mapping class groups,} Topology 43 (2004), no. 3, 513--541. 

\bibitem{Irmak3}
{\bf E Irmak,}
{\it Superinjective simplicial maps of complexes of curves and injective homomorphisms of subgroups of mapping class groups. II,} Topology Appl. 153 (2006), no. 8, 1309--1340.

\bibitem{Irmak4}
{\bf E Irmak,}
{\it Complexes of nonseparating curves and mapping class groups,}
Michigan Math. J. 54 (2006), no. 1, 81--110.

\bibitem{Irmak1}
{\bf E Irmak,}
{\it On simplicial maps of the complexes of curves of nonorientable surfaces,}
Algebr. Geom. Topol. 14 (2014), no. 2, 1153--1180.

\bibitem{IrmPar1}
{\bf E Irmak, L Paris},
{\it Superinjective simplicial maps of the two-sided curve complexes on nonorientable surfaces},
preprint, arXiv:1707.09937.

\bibitem{Ivan1}
{\bf N\,V Ivanov,}
{\it Subgroups of Teichmüller modular groups,}
Translations of Mathematical Monographs, 115.
American Mathematical Society, Providence, RI, 1992.

\bibitem{Ivano2}
{\bf N\,V Ivanov,}
{\it Automorphisms of complexes of curves and of Teichmüller spaces,}
Internat. Math. Res. Notices 1997, no. 14, 651--666.


\bibitem{Kuno1}
{\bf E Kuno,}
{\it Abelian subgroups of the mapping class groups for non-orientable surfaces,}
preprint, arXiv:1608.04501.

\bibitem{Schar1}
{\bf M Scharlemann,}
{\it The complex of curves on nonorientable surfaces,}
J. London Math. Soc. (2) 25 (1982), no. 1, 171--184.

\bibitem{Stuko1}
{\bf M Stukow,}
{\it Dehn twists on nonorientable surfaces,}
Fund. Math. 189 (2006), no. 2, 117--147.

\bibitem{Stuko2}
{\bf M Stukow,}
{\it Commensurability of geometric subgroups of mapping class groups,}
Geom. Dedicata 143 (2009), 117--142.

\bibitem{Wu1}
{\bf Wu Ying Qing,}
{\it Canonical reducing curves of surface homeomorphism,}
Acta Math. Sinica (N.S.) 3 (1987), no. 4, 305--313.

\end{thebibliography}
\end{document}